%
%
%
%
\documentclass{ps}
%
\usepackage{amsmath,amsfonts}
\usepackage{color}
\usepackage{hyperref}
\usepackage[applemac]{inputenc}
\usepackage{mathrsfs}

\def \O{\mathcal{O}}

\def\leftB{[\![}
\def\rightB{]\!]}

\newcommand \A[1]{{\bf (#1)}}

\def\O{{\cal{O}}}

\def\F{{\cal F}}

\def\P{{\mathbb{P}}  }

\def\bint#1^#2{\displaystyle{\int_{#1}^{#2}}}
\def\bsum#1^#2{\displaystyle{\sum_{#1}^{#2}}}

\def\xdt_#1{X_#1(\Delta t)}

\newtheorem{THM}{Theorem}[section]
\newtheorem{DEF}{Definition}[section]
\newtheorem{PROP}{Proposition}[section]

\newtheorem{COROL}{Corollary}[section]

\newtheorem{LEMME}{Lemma}[section]
\newtheorem{REM}{Remark}[section]

\newcommand{\mysection}{\setcounter{equation}{0} \section}

\newcommand{\blackboard}[1]{\mathbf{#1}} 
\newcommand{\R}{\blackboard{R}}
\newcommand{\N}{\blackboard{N}}
\renewcommand{\P}{\blackboard{P}}
\newcommand{\E}{\blackboard{E}}




\newcommand{\delimleft}[2]{\ifcase #1\or
    \bigl#2\or %
    \Bigl#2\or %
    \biggl#2\or %
    \Biggl#2\or %
    \left#2\fi}
\newcommand{\delimright}[2]{\ifcase #1\or
    \bigr#2\or %
    \Bigr#2\or %
    \biggr#2\or %
    \Biggr#2\or %
    \right#2\fi}





\begin{document}
\title{Transport-Entropy inequalities and deviation estimates for stochastic approximations schemes}
\author{M. Fathi}\address{LPMA, Universit\'e Pierre et Marie Curie, 4 Place Jussieu, 75005 Paris Cedex, max.fathi@etu.upmc.fr}
\author{N. Frikha}\address{LPMA, Universit\'e Denis Diderot, 175 Rue du Chevaleret 75013 Paris, frikha@math.univ-paris-diderot.fr}
\date{\today}
\begin{abstract} We obtain new transport-entropy inequalities and, as a by-product, new deviation estimates for the laws of two kinds of discrete stochastic approximation schemes. The first one refers to the law of an Euler like discretization scheme of a diffusion process at a fixed deterministic date and the second one concerns the law of a stochastic approximation algorithm at a given time-step. Our results notably improve and complete those obtained in \cite{Frikha2012}. The key point is to properly quantify the contribution of the diffusion term to the concentration regime. We also derive a general non-asymptotic deviation bound for the difference between a function of the trajectory of a continuous Euler scheme associated to a diffusion process and its mean. Finally, we obtain non-asymptotic bound for stochastic approximation with averaging of trajectories, in particular we prove that averaging a stochastic approximation algorithm with a slow decreasing step sequence gives rise to optimal concentration rate.   
\end{abstract}
\subjclass{60H35,65C30,65C05}
\keywords{deviation bounds, transportation-entropy inequalities, Euler scheme, stochastic approximation algorithms, stochastic approximation with averaging}
\maketitle

\section{Introduction}
In this work, we derive transport-entropy inequalities and, as a consequence, non-asymptotic deviation estimates for the laws at a given time step of two kinds of discrete-time and $d$-dimensional stochastic evolution scheme of the form
\begin{equation}
\label{SCHEME_GEN}
X_{n+1}=X_n + \gamma_{n+1} H(n,X_n,U_{n+1}), \ n\ge 0, X_0=x\in \R^d,
\end{equation}

\noindent where $(\gamma_n)_{n\ge 1} $ is a deterministic positive sequence of time steps, the $(U_i)_{i\in \N^*}$ are i.i.d. $\R^q $-valued random variables defined on some probability space $(\Omega,\F,\P)$ with law $\mu$ and the function $H:\N\times\R^d\times \R^q\rightarrow \R^d$ is a measurable function satisfying for all $x\in \R^d$, for all $n \in \N$, $H(n,x,.) \in \mathcal{L}^{1}(\mu)$, and $\mu(du)$-a.s., $H(n,.,u)$ is continuous. Here and below, we will also assume that $\mu$ satisfies a \textit{Gaussian concentration property}, that is there exists $\beta>0$ such that for every real-valued 1-Lipschitz function $f$ defined on $\R^q$ and for all $\lambda\ge 0$:
\begin{equation}
\label{CONC_GAUSS}
 \E[\exp(\lambda f(U_1))]\le \exp(\lambda \E[f(U_1)]+\frac{\beta\lambda^2}{4})\tag{$GC(\beta)$}.
\end{equation}

\noindent It is well known that \eqref{CONC_GAUSS} implies the following deviation bound 
$$
\P[f(U_1)-\E[f(U_1)]\ge r] \le \exp(-\frac{r^2}{\beta}) \ \ \forall r\ge 0,
$$

Examples of random variables satisfying this property include Gaussians, as well as bounded random variables. A characterization of \eqref{CONC_GAUSS} is given by Gaussian tail of $U_1$, that is there exists $\varepsilon>0$ such that $\E[\exp(\varepsilon |U_1|^2)]<+\infty$, see e.g. Bolley and Villani \cite{boll:vill:05}. The two claims are actually equivalent.

We are interested in furthering the discussion, initiated in \cite{Frikha2012}, about giving non asymptotic deviation bounds for two specific problems related to evolution schemes of the form \eqref{SCHEME_GEN}. The first one is the deviation between a function of an Euler like discretization scheme of a diffusion process at a fixed deterministic date and its mean. The second one refers to the deviation between a stochastic approximation algorithm at a given time-step and its target. Under some mild assumptions, in particular the assumption that the function $u \mapsto H(n, x, u)$ is lipschitz uniformly in space and time, it is proved in \cite{Frikha2012} that both recursive schemes share the Gaussian concentration property of the innovation. 

In the present work, we point out the contribution of the diffusion term to the concentration rate which to our knowledge is new. This covers many situations and gives rise to different regimes ranging from exponential to Gaussian. We also derive a general non-asymptotic deviation bound for the difference between a function of the trajectory of a \emph{continuous Euler scheme} associated to a diffusion process and its mean.  It turns out that, under mild assumptions, the concentration regime is log-normal. Finally, we study non-asymptotic deviation bound for stochastic approximation with averaging of trajectories according to the \emph{averaging principle of Ruppert \& Polyak}, see e.g. \cite{Ruppert1991} and \cite{Polyak1992}. 

\subsection{Euler like Scheme of a Diffusion Process}
\label{DESCR_EUL}

We consider a Brownian diffusion process $(X_{t})_{t\geq0}$ defined on a filtered probability space $(\Omega,\F,(\F_t)_{t\ge 0},\P)$, satisfying the usual conditions, and solution to the following stochastic differential equation (SDE)
\begin{equation}
\label{EDS}
X_t=x+\int_0^t b(s,X_s)ds+\int_0^t\sigma(s,X_s)dW_s, \tag{$SDE_{b,\sigma}$}
\end{equation}

\noindent where $(W_t)_{t\ge 0}$ is a $q$-dimensional $(\F_t)_{t\ge 0} $ Brownian motion and the coefficients $b,\sigma $ are assumed to be uniformly Lipschitz continuous in space and measurable in time.

A basic problem in Numerical Probability is to compute quantities like $\E_x[f(X_T)]$ for a given Lipschitz continuous function $f$ and a fixed deterministic time horizon $T$ using Monte Carlo simulation. For instance, it appears in mathematical finance and represents the price of a European option with maturity $T$ when the dynamics of the underlying asset is given by \eqref{EDS}. Under suitable assumptions on the function $f$ and the coefficients $b,\sigma $, namely smoothness or non degeneracy, it can also be related to the Feynman-Kac representation of the heat equation associated to the generator of $X$. 
To this end, we first introduce some discretization schemes of \eqref{EDS} that can be easily simulated. For a fixed time step $\Delta = T/N, \ N \in \N^{*}$, we set $\ t_i:=i\Delta$, for all $i \in \N$ and define an Euler like scheme by
\begin{eqnarray}
\label{EULER}
X_0^\Delta=x,\
\forall i\in \leftB 0, N-1\rightB, X_{t_{i+1}}^\Delta=X_{t_i}^\Delta+ b(t_i,X_{t_i}^\Delta)\Delta +\sigma(t_i,X_{t_i}^\Delta) \Delta^{1/2} U_{i+1},
\end{eqnarray}

\noindent where $(U_i)_{i\in \N^*}$ is a sequence of $\R^q $-valued i.i.d. random variables with law $\mu$ satisfying: $\E[U_1]=0_q,\ \E[U_{1}U_1^*]=I_{q}$, where $U_1^*$ denotes the transpose of the column vector $U_1$ and $0_q,I_q $ respectively denote the zero vector of $\R^q$ and the identity matrix of $\R^q\otimes \R^q$. We also assume that $\mu$ satisfies \eqref{CONC_GAUSS} for some $\beta>0$. The main advantage of such a situation is that it includes the case of the standard Euler scheme where $U_1\overset{{\rm d}}{=}{\cal N}(0,I_q)$ (satisfying \eqref{CONC_GAUSS} with $\beta=2$) and the case of the Bernoulli law where $U_1\overset{{\rm d}}{=} (B_1,\cdots, B_q),\ (B_k)_{k\in \leftB 1,q\rightB}$ are i.i.d random variables with law $\mu=\frac{1}{2} (\delta_{-1}+\delta_1)$, which turns out to be one of the only realistic options when the dimension is large. 

The weak error $\mathcal{E}_{D}(f, \Delta, T,b, \sigma)  = \E_x[f(X_T)]-\E_x[f(X^{\Delta}_T)]$ corresponds to the discretization error when replacing the diffusion $X$ by its Euler scheme $X^\Delta$ for the computation of $\E_x[f(X_T)]$. It has been widely investigated in the literature. Since the seminal work of \cite{tala:tuba:90}, it is known that, under smoothness assumption on the coefficients $b,\ \sigma$, the \emph{standard Euler scheme} produces a weak error of order $\Delta$. In a hypoelliptic setting for the coefficients $b$ and $\sigma$ and for a bounded measurable function $f$, Bally and Talay obtained the expected order using Malliavin calculus. For a uniformly elliptic diffusion coefficient $\sigma \sigma^{*}$ and if $b,\ \sigma$ are three times continuously differentiable, the same order for the weak error is established in \cite{kona:mamm:02}. The same order $\Delta$ is still valid in the situation where the Gaussian increments are replaced by (non necessarily continuous) random variables $(U_{i})_{1\leq i \leq N}$ having the same covariance matrix and odd moments up to order 5 as the law ${\cal N}(0,I_q)$ and if $b, \ \sigma, \ f$ are smooth enough. Let us finally mention the recent work \cite{Alfonsi2012} where the authors study the \emph{weak trajectorial error} using coupling techniques.  More precisely, they prove that the Wasserstein distance between the law of a uniformly elliptic and one-dimensional diffusion process and the law of  its \emph{continuous Euler scheme} $X^{c,\Delta}$ with time step $\Delta:=T/N$ is smaller than $\mathcal{O}(N^{-2/3+\epsilon})$, $\forall \epsilon>0$. 

The expansion of $\mathcal{E}_{D}$ also allows to improve the convergence rate to $0$ of the discretization error using Richardson-Romberg extrapolation techniques, see e.g. \cite{tala:tuba:90}.

In order to have a global control of the numerical procedure for the computation of $\E_x[f(X_T)]$, it remains to approximate the expectation $\E_x[f(X^{\Delta}_T)]$ using a Monte Carlo estimator $M^{-1} \times \sum_{k=1}^{M} f((X_T^{\Delta,x})^j)$ where the $((X_T^{\Delta,x})^j)_{j\in \leftB 1,M\rightB}$ are $M$ independent copies of the scheme \eqref{EULER} starting at the initial value $x$ at time $0$. This gives rise to an \emph{empirical error} defined by $\mathcal{E}_{Emp}(M, f, \Delta, T, b, \sigma) = \E_x[f(X^{\Delta}_T)]-M^{-1} \times \sum_{j=1}^{M} f((X_T^{\Delta,x})^j)$. Consequently, the global error associated to the computation of $\E_x[f(X_T)]$ writes as
\begin{align*}
\mathcal{E}_{Glob}(M,\Delta) & = \E_x[f(X_T)] - \E_x[f(X^{\Delta}_T)] + \E_x[f(X^{\Delta}_T)] - \frac{1}{M} \times \sum_{j=1}^{M} f((X_T^{\Delta,x})^j) \\
& := \mathcal{E}_{D}(f, \Delta, T,b, \sigma) + \mathcal{E}_{Emp}(M, f, \Delta, T, b, \sigma).
\end{align*}

It is well-known that if $f(X_T^{\Delta,x})$ belongs to $L^{2}(\P)$ the central limit theorem provides an \emph{asymptotic} rate of convergence of order $M^{1/2}$. If $f(X_T^{\Delta,x}) \in L^{3}(\P)$, a non-asymptotic result is given by the Berry-Essen theorem. However, in practical implementation, one is interested in obtaining deviation bounds in probability for a fixed $M$ and a given threshold $r>0$, that is explicitly controlling the quantity $\P\left(\mathcal{E}_{Emp}(M,\Delta) \geq r\right)$. 

In this context, Malrieu and Talay \cite{Malrieu2006} obtained Gaussian deviation bounds in an ergodic framework and for a constant diffusion coefficient. Concerning the \emph{standard} Euler scheme, Menozzi and Lemaire \cite{lema:meno:10} obtained two-sided Gaussian bounds up to a systematic bias under the assumptions that the diffusion coefficient is uniformly elliptic, $\sigma\sigma^{*}$ is H\"older-continuous, bounded and that $b$ is bounded. Frikha and Menozzi \cite{Frikha2012}, getting rid of the non-degeneracy assumption on $\sigma$, recently obtained Gaussian deviation bound under the mild smoothness condition that $b, \ \sigma$ are uniformly Lipschitz-continuous in space (uniformly in time) and that $\sigma$ is bounded. The main tool of their analysis is to exploit similar decompositions used in \cite{tala:tuba:90} for the analysis of the weak error. It should be noted that it is the boundedness of $\sigma$ that gives rise to the Gaussian concentration regime for the deviation of the \emph{empirical error}.

Using optimal transportation techniques, Blower and Bolley \cite{blow:boll:06} obtained Gaussian concentration inequalities and transportation inequalities for the joint law of the first $n$ positions of a stochastic processes with state space some Polish space. However, continuity assumptions in Wasserstein metric need to be checked which can be hard in practice, see conditions (ii) in their Theorems 1.1, 1.2 and 2.1. The authors provide a computable sufficient condition which notably requires the smoothness of the transition law, see Proposition 2.2. in \cite{blow:boll:06}.  

In the current work, we get rid of the boundedness of $\sigma$ and we only need the Gaussian concentration property of the innovation. We suppose that the coefficients satisfy the following smoothness and domination assumptions
\begin{trivlist}
\item[\A{HS}] The coefficients $b,\sigma$ are uniformly Lipschitz continuous in space uniformly in time.
\item[\A{HD$_{\alpha}$}] There exists a $\mathcal{C}^{2}(\R^{d},\R^{*}_{+})$ function $V$ satisfying $\exists C_{V}>0, |\nabla V|^2 \leq C_V V, \ \eta:=\frac{1}{2}\sup_{x \in \R^{d}} \left\|\nabla^{2} V(x)\right\| <+\infty$ and $\exists \alpha \in (0,1]$, such that for all $x\in \R^{d}$,
$$
\exists C_b>0, \ \ \sup_{t\in [0,T]}|b(t,x)|^2 \leq C_b V(x), \ \ , \ \exists C_{\sigma}>0, \ \ \sup_{t\in [0,T]}Tr(a(t,x)) \leq C_{\sigma} V^{1-\alpha}(x). 
$$

\noindent where $a=\sigma \sigma^{*}$.
\end{trivlist}

The idea behind assumption \A{HD$_{\alpha}$} is to parameterize the growth of the diffusion coefficient in order to quantify its contribution to the concentration regime. Indeed, under \A{HS} and \A{HD$_{\alpha}$}, with $\alpha \in [1/2,1]$, and if the innovations satisfy \eqref{CONC_GAUSS}, for some positive $\beta$, we derive non-asymptotic deviation bounds for the statistical error $E_M^\Delta(x,T,f)-\E_x[f(X_T^\Delta)]$ ranging from exponential (if $\alpha=1/2$) to Gaussian (if $\alpha=1$) regimes. Therefore, we greatly improve the results obtained in \cite{Frikha2012}.
 
 Our approach here is different from \cite{Frikha2012}. Indeed, in \cite{Frikha2012}, the key tool consists in writing the deviation using the same kind of decompositions that are exploited in \cite{tala:tuba:90} for the analysis of the discretization error. In the current work, we will use the fact that the Euler-like scheme \eqref{EULER} defines an inhomogenous Markov chain having Feller transitions $P_k$, $k=0,\cdots, N-1$, defined for non negative or bounded Borel function $f:\mathbb{R}^d\rightarrow \mathbb{R}$ by
$$
P_k(f)(x) = \E\left[\left. f(X_{t_{k+1}}^\Delta) \right| X_{t_{k}}^\Delta=x\right] = \mathbb{E}\left[f\left(x+ b(t_k,x)\Delta +\sigma(t_k,x) \Delta^{1/2} U \right)\right], \ \ k=0,\cdots,N-1.
$$

For every $k, \ p \in \left\{0, \cdots, N-1\right\}$, $k\leq p$, we also define the iterative kernels for a non negative or bounded Borel function $f:\mathbb{R}^d\rightarrow \mathbb{R}$
$$
P_{k,p}(f)(x) = P_{k}\circ \cdots \circ P_{p-1}(f)(x) = \mathbb{E}\left[\left. f(X_{t_{p}}^\Delta) \right|  X_{t_{k}}^\Delta=x\right].
$$

For a $1$-Lipschitz function $f$ and $\lambda\geq0$, using that the law $\mu$ of the innovation satisfies \eqref{CONC_GAUSS} for some positive $\beta$, we obtain
\begin{eqnarray*}
P_{N-1}(\exp(\lambda f))(x) & = &\mathbb{E}\left[\exp\left(\lambda f\left(x+ b(t_{N-1},x)\Delta +\sigma(t_{N-1},x) \Delta^{1/2} U \right)\right)\right] \\
 & \leq & \exp\left(\lambda P_{N-1}(f)(x) + \beta \frac{\lambda^2}{4} \Delta |\sigma(t_{N-1},x)|^2 \right)
\end{eqnarray*}

If $\sigma$ is bounded, the Gaussian concentration property will readily follow provided the iterated kernel functions $P_{k,p}(f)$ are uniformly Lipschitz. Under the mild smoothness assumption \A{HS}, this can be easily derived, see Proposition \ref{Lipschitz_control_Euler}. Otherwise, using \A{HD$_{\alpha}$}, we obtain
\begin{equation}
\label{first_step_to_conc_Euler}
P_{N-1}(\exp(\lambda f))(x) \leq \exp\left(\lambda P_{N-1}(f)(x) + \frac{C_{\sigma} \beta \Delta}{4}  \lambda^2 V^{1-\alpha}(x) \right).
\end{equation}

 The last inequality is the first step of our analysis. To investigate the empirical error, the key idea is to exploit recursively from \eqref{first_step_to_conc_Euler} that the increments of the scheme \eqref{EULER} satisfy \eqref{CONC_GAUSS} and to adequately quantify the contribution of the diffusion term $V^{1-\alpha}(x)$ to the concentration rate. Under \A{HS} and \A{HD$_{\alpha}$}, the latter is addressed using flow techniques and integrability results on the law of the scheme \eqref{EULER}, see Propositions \ref{CONT_LAPLACEV} and \ref{CONT_LAPLACE}.

 \subsection{Stochastic Approximation Algorithm}
 \label{RM_SEC}
Beyond concentration bounds of the empirical error for Euler-like schemes, we want to look at non asymptotic bounds for stochastic approximation algorithms. Introduced by H. Robbins and S. Monro \cite{Robbins1951}, these recursive algorithms aim at finding a zero of a continuous function $h:\mathbb{R}^{d} \rightarrow \mathbb{R}^{d}$ which is unknown to the experimenter but can only be estimated through experiments. Successfully and widely investigated since this seminal work, such procedures are now commonly used in various contexts such as convex optimization since minimizing a function amounts to finding a zero of its gradient. 

To be more specific, the aim of such an algorithm is to find a solution $\theta^{*}$ to the equation $h(\theta):=\mathbb{E}[H(\theta,U)]=0$, where $H: \mathbb{R}^{d} \times \mathbb{R}^{q} \rightarrow \mathbb{R}^{d}$ is a Borel function and $U$ is a given $\mathbb{R}^{q}$-valued random variable with law $\mu$. The function $h$ is generally not computable, at least at a reasonable cost. Actually, it is assumed that the computation of $h$ is costly compared to the computation of $H$ for any couple $(\theta,u) \in \mathbb{R}^{d}\times \mathbb{R}^{q}$ and to the simulation of the random variable $U$.

A stochastic approximation algorithm corresponds to the following simulation-based recursive scheme
\begin{equation}
\label{RM}
\theta_{n+1} = \theta_{n} - \gamma_{n+1} H(\theta_{n},U_{n+1}), \ n \geq 0, \ \theta_{0} \in \mathbb{R}^{d},
\end{equation}

\noindent where $(U_{n})_{n\geq1}$ is an i.i.d. $\mathbb{R}^{q}$-valued sequence of random variables with law $\mu$ defined on a probability space $(\Omega, \mathcal{F}, \mathbb{P})$ and $(\gamma_{n})_{n\geq1}$ is a sequence of non-negative deterministic steps satisfying the usual assumption
\begin{equation}
\label{STEP}
\sum_{n\geq1} \gamma_{n} = + \infty, \ \ \mbox{and} \ \ \sum_{n\geq1} \gamma_{n}^{2} < + \infty.
\end{equation}

\noindent When the function $h$ is the gradient of a potential, the recursive procedure \eqref{RM} is a stochastic gradient algorithm. Indeed, replacing $H(\theta_n,U_{n+1})$ by $h(\theta_n)$ in \eqref{RM} leads to the usual deterministic descent gradient method. When $h(\theta)=M(\theta)-\ell$, $\theta \in \mathbb{R}$, where $M$ is a monotone function, say increasing, we can write $M(\theta)=\E[N(\theta,U)]$ where $N: \mathbb{R} \times \mathbb{R}^{q} \rightarrow \mathbb{R}$ is a Borel function and $\ell$ is a given constant such that the equation $M(\theta)=\ell$ has a solution. Setting $H=N-\ell$, the recursive procedure \eqref{RM} then corresponds to the seminal Robbins-Monro algorithm and aims at computing the level of the function $M$. 

The key idea of stochastic approximation algorithms is to take advantage of an averaging effect along the scheme due to the specific form of $h(\theta):=\E[H(\theta,U)]$. This allows to avoid the numerical integration of $h$ at each step of a classical first-order optimization algorithm.

\noindent In the present paper, we make no attempt to provide a general discussion concerning convergence results of stochastic approximation algorithms. We refer readers to \cite{Duflo1996}, \cite{Kushner2003} for some general results on the $a.s.$ convergence of such procedures under the existence of a so-called \emph{Lyapunov function}, $i.e.$ a continuously differentiable function $L:\mathbb{R}^{d}\rightarrow \mathbb{R}_{+}$ such that $\nabla L$ is Lipschitz, $|\nabla L|^{2} \leq C(1+L)$ for some positive constant $C$ and 
$$
 \left\langle \nabla L , h \right\rangle \geq 0.
$$

\noindent See also \cite{Laruelle2012} for a convergence theorem under the existence of a \emph{pathwise Lyapunov function}. For the sake of simplicity, in the sequel it is assumed that $\theta^{*}$ is the unique solution of the equation $h(\theta)=0$ and that the sequence $(\theta_{n})_{n\geq0}$ defined by \eqref{RM} converges $a.s.$ towards $\theta^{*}$.

We assume that the law $\mu$ of the innovation satisfies \eqref{CONC_GAUSS} for some $\beta>0$ and that the step sequence $(\gamma_{n})_{n\geq1}$ satisfies \eqref{STEP}. We also suppose that the following assumptions on the function $H$ are in force:
\begin{trivlist}
\item[\A{HL}] For all $u\in \R^{q}$, the function $H(.,u)$ is Lipschitz-continuous with a Lipschitz modulus having linear growth in the variable $u$, that is: 
$$
\exists C_H>0, \ \forall u \in \R^{q}, \ \sup_{(\theta, \theta^{'}) \in (\R^{d})^{2}} \frac{|H(\theta,u)-H(\theta^{'},u)|}{|\theta-\theta^{'}|} \leq C_{H} (1+|u|).
$$

\item[\A{HLS}$_\alpha$] (\textit{Lyapunov Stability-Domination})  There exists a $\mathcal{C}^{2}(\R^{d},\R^{*}_+)$ function $L$ satisfying $\exists C_{L}>0, |\nabla L|^2 \leq C_L L, \ \eta:=\frac{1}{2}\sup_{x \in \R^{d}} \left\|\nabla^{2} L(x)\right\| <+\infty$ such that
$$
\forall \theta \in \R^d, \ \  \left\langle h(\theta), \nabla L(\theta) \right\rangle \geq  0, \ \ \ \mbox{ and } \ \ \exists C_{h}>0, \ \forall \theta \in \R^d, \ |h(\theta)|^2 \leq C_{h} L(\theta).
$$

\noindent and $\exists \alpha \in (0,1]$,
$$
  \ \ \ \exists C_{\alpha}>0, \ \forall \theta \in \R^{d}, \ \sup_{(u,u^{'}) \in (\R^{q})^{2}} \frac{|H(\theta,u)-H(\theta,u^{'})|}{|u-u^{'}|} \leq C_{\alpha} L^{\frac{1-\alpha}{2}}(\theta) 
$$

\item[\A{HUA}] (\textit{Uniform Attractivity}) The map $h:\theta \in \R^d \mapsto \E[H(\theta,U)]$ is continuously differentiable in $\theta $ and there exists $\underline{\lambda}>0 $ s.t. $\forall \theta \in \R^{d}, \ \forall \xi\in \R^d, \  \underline{\lambda} |\xi|^2 \le \langle Dh(\theta)\xi,\xi\rangle$. 
\end{trivlist}

Compared to \cite{Frikha2012}, our assumptions are weaker. Indeed, it is assumed in \cite{Frikha2012} that the map $(\theta, u) \in \R^d \times \R^q \mapsto H(\theta,u)$ is uniformly Lipschitz continuous. In our current framework, this latter assumption is replaced by \A{HL} and \A{HLS}$_\alpha$. 

The last assumption \A{HUA}, which already appeared in \cite{Frikha2012}, is introduced to derive a sharp estimate of the concentration rate in terms of the step sequence. Let us note that such assumption appears in the study of the weak convergence rate order for the sequence $(\theta_{n})_{n\geq1}$ as described in \cite{Duflo1996} or \cite{Kushner2003}. Indeed, it is commonly assumed that the matrix $Dh(\theta^{*})$ is \emph{uniformly attractive} that is $\mathcal{R}e(\lambda_{min})>0$ where $\lambda_{min}$ is the eigenvalue with the smallest real part. In our current framework, this local condition on the Jacobian matrix of $h$ at the equilibrium is replaced by the uniform assumption \A{HUA}. This allows to derive sharp estimates for the concentration rate of the sequence $(\theta_n)_{n\geq1}$ around its target $\theta^{*}$ and to provide a sensitivity analysis for the bias $\delta_n:=\E[|\theta_{n}-\theta^*|]$ with respect to the starting point $\theta_0$. 

Let us note that under \A{HUA} and the \emph{linear growth assumption}
$$
\forall \theta \in \R^{d}, \ \ \E\left[\left|H(\theta,U)\right|^{2}\right] \leq C(1+|\theta-\theta^{*}|^{2}),
$$

\noindent which is satisfied if \A{HL} and \A{HLS}$_\alpha$, with $\alpha \in [0,1]$, hold and if $\mu$ satisfies \eqref{CONC_GAUSS} for some $\beta>0$, the function $L:\theta \mapsto \frac{1}{2}\left|\theta - \theta^{*}\right|^{2}$ is a Lyapunov function for the recursive procedure defined by \eqref{RM} so that one easily deduces that $\theta_{n} \rightarrow \theta^{*}$, $a.s.$ as $n\rightarrow + \infty$.

The global error between the stochastic approximation procedure $\theta_{n}$ at a given time step $n$ and its target $\theta^{*}$ can be decomposed as \emph{an empirical error} and \emph{a bias} as follows
\begin{eqnarray}
 \left|\theta_{n} - \theta^{*}\right| & = & \left|\theta_{n} - \theta^{*}\right| - \E_{\theta_0}[\left|\theta_{n} - \theta^{*}\right|] + \E_{\theta_0}[\left|\theta_{n} - \theta^{*}\right|] \nonumber\\
      &  := & \mathcal{E}_{Emp}(\gamma,n,H,\underline{\lambda}, \alpha) + \delta_{n}\label{DEC_RM}
\end{eqnarray}

\noindent where we introduced the notations $\mathcal{E}_{Emp}(\gamma,n,H,\underline{\lambda}, \alpha)=\left|\theta_{n} - \theta^{*}\right| - \E_{\theta_0}[\left|\theta_{n} - \theta^{*}\right|]$ and $\delta_n := \E_{\theta_0}[\left|\theta_{n} - \theta^{*}\right|]$.

The \emph{empirical error} $\mathcal{E}_{Emp}(\gamma,n,H,\underline{\lambda}, \alpha)$ is the difference between the absolute value of the error at time $n$ and its mean whereas \emph{the bias} $\delta_n$ corresponds to the mean of the absolute value of the difference between the sequence  $(\theta_{n})_{n\geq0}$ at time $n$ and its target $\theta^{*}$. Unlike the Euler like scheme, a bias systematically appears since we want to derive a deviation bound for the difference between $\theta_n$ and its target $\theta^{*}$. This term strongly depends on the choice of the step sequence $(\gamma_{n})_{n\geq1}$ and the initial point $\theta_0$, see Proposition \ref{CTR_BIAS} for a sensitivity analysis.

 As for Euler like schemes, our strategy is different from \cite{Frikha2012}. Indeed, we exploit again the fact that the stochastic approximation scheme \eqref{RM} defines an inhomogenous Markov chain having Feller transitions $P_k$, $k=0,\cdots, N-1$, defined for non negative or bounded Borel function $f:\mathbb{R}^d\rightarrow \mathbb{R}$ by
$$
P_k(f)(\theta) = \E\left[\left. f(\theta_{k+1}) \right| \theta_{k}=\theta\right] = \mathbb{E}\left[f\left(\theta-\gamma_{k+1}H(\theta,U) \right)\right], \ \ k=0,\cdots,N-1.
$$

For every $k, \ p \in \left\{0, \cdots, N-1\right\}$, $k\leq p$, we also define the iterative kernels for a non negative or bounded Borel function $f:\mathbb{R}^d\rightarrow \mathbb{R}$ as follows
$$
P_{k,p}(f)(\theta) = P_{k}\circ \cdots \circ P_{p-1}(f)(\theta) = \mathbb{E}\left[\left. f(\theta_{p}) \right|  \theta_{k}=\theta\right].
$$

For a $1$-Lipschitz function $f$ and for all $\lambda\geq0$, using \A{HLS}$_\alpha$ and that the law $\mu$ of the innovation satisfies \eqref{CONC_GAUSS} for some positive $\beta$, we obtain
\begin{eqnarray}
P_{N-1}(\exp(\lambda f))(\theta) & = &\mathbb{E}\left[\exp\left(\lambda f\left(\theta- \gamma_{N} H(\theta,U) \right)\right)\right]  \leq \exp\left(\lambda P_{N-1}(f)(\theta) + \beta \frac{\lambda^2}{4} C^2_{\alpha} \gamma^2_{N} L^{1-\alpha}(\theta)\right) \label{first_step_to_conc_RM}
\end{eqnarray}

Let us note the similarity between \eqref{first_step_to_conc_Euler} and \eqref{first_step_to_conc_RM}. If \A{HLS}$_\alpha$ holds with $\alpha=1$ then the last term appearing in the right hand side of the last inequality is uniformly bounded in $\theta$. This latter assumption corresponds to the framework developed in \cite{Frikha2012} and leads to a Gaussian concentration bound. 

Otherwise, the problem is more challenging. Under the mild domination assumption \A{HLS}$_\alpha$, the key idea consists again in exploiting recursively from \eqref{first_step_to_conc_RM} that the increments of the stochastic approximation algorithm \eqref{RM} satisfy \eqref{CONC_GAUSS} and in properly quantifying the contribution of the diffusion term $L^{1-\alpha}(\theta)$ to the concentration rate.

As already noticed in \cite{Frikha2012}, the concentration rate and the bias strongly depends on the choice of the step sequence. In particular, if $\gamma_n = \frac{c}{n}$, with $c>0$ then the optimal concentration rate and bias is achieved if $c>\frac{1}{2\underline{\lambda}}$, see Theorem 2.2. in \cite{Frikha2012}. Otherwise, they are sub-optimal. This kind of behavior is well-known concerning the weak convergence rate for stochastic approximation algorithm. Indeed, if $c > \frac{1}{2 \mathcal{R}e(\lambda_{min})}$ we know that a Central Limit Theorem holds for the sequence $(\theta_{n})_{n\geq1}$ (see e.g. \cite{Duflo1996}). Let us note that the condition $c>\frac{1}{2\underline{\lambda}}$ as well as $c > \frac{1}{2 \mathcal{R}e(\lambda_{min})}$ is difficult to handle and may lead to a blind choice in practical implementation. 

To circumvent such a difficulty, it is fairly well-known that the key idea is to carefully smooth the trajectories of a converging stochastic approximation algorithm by averaging according to the \emph{Ruppert \& Polyak averaging principle}, see e.g. \cite{Ruppert1991} and \cite{Polyak1992}. It consists in devising the original stochastic approximation algorithm \eqref{RM} with a slow decreasing step 
$$
\gamma_{n} = \left(\frac{c}{b+n}\right)^{\nu}, \ \ \nu \in \left(\frac{1}{2},1\right), c,b>0,
$$

\noindent and to simultaneously compute the empirical mean $(\bar{\theta}_{n})_{n\geq1}$ of the sequence $(\theta_{n})_{n\geq0}$ by setting
\begin{align}
\label{RM_AV}
\bar{\theta}_{n} & = \frac{\theta_0 + \cdots + \theta_{n-1}}{n} = \bar{\theta}_{n-1} - \frac{1}{n}\left(\bar{\theta}_{n-1}-\theta_{n-1}\right).
\end{align}

We will not enter into the technicalities of the subject but under mild assumptions (see e.g. \cite{Duflo1996}, p.169) one shows that 
$$
\sqrt{n}(\bar{\theta}_{n}-\theta^{*}) \stackrel{\mathcal{L}}{\rightarrow} \mathcal{N}(0,\Sigma^{*}), \ n\rightarrow + \infty,
$$

\noindent where $\Sigma^{*}$ is the optimal covariance matrix. For instance, for $d=1$, one has $\Sigma^{*} = \frac{Var(H(\theta^{*},U))}{(h^{'}(\theta^{*}))^2}$. Hence, the optimal weak rate of convergence $\sqrt{n}$ is achieved for free without any condition on the constants $c$ or $b$. However, this result is only asymptotic and so far, to our best knowledge, non-asymptotic estimates for the deviation between the empirical mean sequence $(\bar{\theta}_{n})_{n\geq0}$ at given time step and its target $\theta^{*}$, that is non-asymptotic averaging principle were not investigated. 

The sequence $(z_{n})_{n\geq0}$ defined by $z_n:=(\bar{\theta}_{n+1},\theta_n)$ is $\mathcal{F}$-adapted, i.e. for all $n\geq0$, $z_{n}$ is $\mathcal{F}_n$-measurable, where $\mathcal{F}_n:=\sigma(\theta_0, U_k, k \leq n)$. Moreover, it defines an inhomogenous Markov chain having Feller transitions $K_k$, $k=0,\cdots, N-1$, defined for non negative or bounded Borel function $f:\R^d \times \R^d \rightarrow \R$ by
\begin{align*}
K_k(f)(z) & = \E[\left. f(z_{k+1}) \right| z_{k} = z ] = \E[\left. f(\bar{\theta}_{k+2},\theta_{k+1}) \right| (\bar{\theta}_{k+1}, \theta_k)= (z_1,z_2)],  \\
& = \E\left[f\left(\frac{k+1}{k+2}z_1+\frac{1}{k+2}(z_2-\gamma_{k+1}H(z_2,U)), z_2-\gamma_{k+1}H(z_2,U)\right)\right].
\end{align*}

For every $k,p \in \left\{0,\cdots, N-1\right\}$, $k\leq p$, we define the iterative kernels for a non negative or bounded Borel function $f: \R^d \times \R^d \rightarrow \R$
$$
K_{k,p}(f)(z) = K_k \circ \cdots K_{p-1}(f)(z) = \E[\left. f(z_p) \right| z_k=z].
$$

Hence, for any $1$-Lipschitz function and for all $\lambda \geq 0$, using again \A{HLS}$_\alpha$ and that the law $\mu$ of the innovation satisfies \eqref{CONC_GAUSS} for some positive $\beta$, one has for all $k \in \left\{0, \cdots, N-1\right\}$
\begin{align}
K_{k}(\exp(\lambda f))(z) & = \mathbb{E}\left[ \left. \exp\left(\lambda f\left(z_{k+1}\right)\right) \right| z_{k} = z\right] \nonumber \\
 & \leq  \exp\left(\lambda K_{k}(f)(z) + \beta \frac{\lambda^2}{4} \left(C_{\alpha} \gamma_{k+1}(\frac{1}{k+2}+1) L^{\frac{1-\alpha}{2}}(z_2) \right)^2 \right) \nonumber \\
 & \leq \exp\left(\lambda K_{k}(f)(z) + \beta \lambda^2 C^2_{\alpha} \gamma^2_{k+1} L^{1-\alpha}(z_2)\right) \label{first_step_to_conc_RM_AV}
\end{align}

\noindent where we used that for all $(z_1,z_2) \in \R^d \times \R^d$, the functions $u\mapsto f\left(\frac{k+1}{k+2}z_1+\frac{1}{k+2}(z_2-\gamma_{k+1}H(z_2,u)), z_2-\gamma_{k+1}H(z_2,u)\right)$ are Lipschitz-continuous with Lipschitz modulus equals to $C_{\alpha} \gamma_{k+1}(\frac{1}{k+2}+1) L^{\frac{1-\alpha}{2}}(z_2)$. 

Here again, \eqref{first_step_to_conc_RM} and \eqref{first_step_to_conc_RM_AV} are quite similar and if $\alpha=1$ the concentration regime turns out to be Gaussian. Otherwise, an analysis along the lines of the methodology developed so far provides the concentration regime of the stochastic approximation algorithm with averaging of trajectories. 

\subsection{Transport-Entropy inequalities}
As a by-product of our analysis, we derive transport-entropy inequalities for the law of both stochastic approximation schemes. We recall here basic definitions and properties. For a complete overview and recent developments in the theory of transport inequalities, the reader may refer to the recent survey \cite{goz:leon:2010}. We will denote by $\mathcal{P}(\R^d)$ the set of probability measures on $\R^d$.

For $p\geq1$, we consider the set $\mathcal{P}_p(\R^d)$ of probability measures with finite moment of order $p$. The Wasserstein metric $W_p(\mu,\nu)$ of order $p$  between two probability measures $\mu,  \nu \in \mathcal{P}_p(\R^d)$ is defined by 
$$
W^p_p(\mu, \nu) = \inf\left\{\int_{\R^d \times \R^d} |x-y|^p \pi(dx,dy): \ \pi \in \mathcal{P}(\R^d \times \R^d), \ \pi_0=\mu, \ \pi_1 = \nu \right\}
$$

\noindent where $\pi_0$ and $\pi_1$ are two probability measures standing for the first and second marginals of $\pi \in \mathcal{P}(\R^d \times \R^d)$. For $\mu \in \mathcal{P}(\R^d)$, we define the relative entropy w.r.t $\nu \in \mathcal{P}(\R^d)$ as
$$
H(\mu,\nu)=\int_{\R^d} \log\left( \frac{d\mu}{d\nu}\right) d\mu
$$

\noindent if $\mu \ll \nu$ and $H(\mu,\nu)=+\infty$ otherwise. We are now in position to define the notion of transport-entropy inequality. Here as below, $\Phi: \R_+ \rightarrow \R_+$ is a convex, increasing function with $\Phi(0)=0$.
\begin{DEF}
A probability measure $\mu$ on $\R^d$ satisfies a transport-entropy inequality with function $\Phi$ if for all $\nu \in \mathcal{P}(\R^d)$, one has
$$
\Phi(W_1(\nu,\mu)) \leq H(\nu,\mu)
$$

\noindent For the sake of simplicity, we will write that $\mu$ satisfies $T_\Phi$. 
\end{DEF}

The following proposition comes from Corollary 3.4. of \cite{goz:leon:2010}.
\begin{PROP} The following propositions are equivalent:
\begin{itemize}

\item The probability measure $\mu$ satisfies $T_\Phi$.

\item For all 1-Lipschitz function $f$, one has
$$
\forall \lambda \geq 0, \ \ \int \exp(\lambda f ) d\mu \leq \exp\left(\lambda \int f d\mu+ \Phi^{*}(\lambda)\right), 
$$

\noindent where $\Phi^*$ is the monotone conjugate of $\Phi$ defined on $\R_+$ as $\Phi^*(\lambda) = \sup_{\rho \geq 0 }\left\{\lambda \rho - \Phi(\rho)\right\}$.
\end{itemize}

\end{PROP}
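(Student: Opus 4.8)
The plan is to deduce this equivalence from two classical duality formulas: the Kantorovich--Rubinstein representation of the $1$-Wasserstein distance,
$$
W_1(\nu,\mu) = \sup\left\{ \int f\, d\nu - \int f\, d\mu \ : \ f \ \text{$1$-Lipschitz}\right\},
$$
and the Gibbs (Donsker--Varadhan) variational formula for relative entropy, used in its two complementary forms
$$
\log \int e^{g}\, d\mu = \sup_{\nu \in \mathcal{P}(\R^d)}\left( \int g\, d\nu - H(\nu,\mu)\right), \qquad \int g\, d\nu \le H(\nu,\mu) + \log \int e^{g}\, d\mu .
$$
A measure satisfying $T_\Phi$ with a nontrivial $\Phi$, or satisfying the Laplace bound with finite $\Phi^*$, has finite first moment, so every $1$-Lipschitz function is $\mu$-integrable; combined with the truncation $f_N:=(f\wedge N)\vee(-N)$, which is again $1$-Lipschitz, this reduces both implications to bounded $1$-Lipschitz test functions $f$, for which all integrals below are finite.

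For the implication $T_\Phi \Rightarrow$ Laplace bound, I would fix a bounded $1$-Lipschitz $f$ and $\lambda\ge 0$, apply the first entropy formula, and then bound, for each $\nu$, $\int f\,d\nu - \int f\,d\mu \le W_1(\nu,\mu)$ by Kantorovich--Rubinstein and $\Phi(W_1(\nu,\mu))\le H(\nu,\mu)$ by hypothesis:
$$
\log \int e^{\lambda f}\, d\mu = \sup_{\nu}\left( \lambda\int f\, d\nu - H(\nu,\mu)\right) \le \lambda \int f\, d\mu + \sup_{\nu}\left( \lambda W_1(\nu,\mu) - \Phi(W_1(\nu,\mu))\right).
$$
Since $t\mapsto \lambda t - \Phi(t)$ is maximized over $t\ge 0$ precisely by $\Phi^*(\lambda)$, the last supremum is $\le \Phi^*(\lambda)$, and exponentiating gives the claimed bound.

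For the converse, I would fix $\nu$ with $H(\nu,\mu)<+\infty$ (the inequality being trivial otherwise). For any bounded $1$-Lipschitz $f$ and $\lambda\ge0$, the one-sided entropy inequality applied to $g=\lambda f$ together with the Laplace bound gives
$$
\lambda\int f\, d\nu \le H(\nu,\mu) + \log\int e^{\lambda f}\, d\mu \le H(\nu,\mu) + \lambda \int f\, d\mu + \Phi^*(\lambda),
$$
so that $\lambda\left(\int f\,d\nu - \int f\,d\mu\right) \le H(\nu,\mu) + \Phi^*(\lambda)$. Taking the supremum over $1$-Lipschitz $f$ and using Kantorovich--Rubinstein yields $\lambda W_1(\nu,\mu) - \Phi^*(\lambda) \le H(\nu,\mu)$ for every $\lambda\ge0$; taking the supremum over $\lambda\ge0$ gives $\Phi^{**}(W_1(\nu,\mu)) \le H(\nu,\mu)$, and the Fenchel--Moreau theorem ($\Phi$ being convex, increasing, lower semicontinuous on $\R_+$ with $\Phi(0)=0$) identifies $\Phi^{**}=\Phi$, which is $T_\Phi$.

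I expect the main obstacle to be not the algebra but the measure-theoretic bookkeeping: rigorously invoking the Kantorovich--Rubinstein and Donsker--Varadhan formulas in this generality, in particular passing from bounded to general $1$-Lipschitz test functions and ensuring all the integrals and suprema are well defined, and justifying $\Phi^{**}=\Phi$ — equivalently, that the $\Phi$ in $T_\Phi$ may be taken convex and lower semicontinuous without loss of generality. Both are standard (truncation and first-moment bounds for the former, Fenchel--Moreau for the latter), and for this reason the statement is quoted directly from Corollary 3.4 of \cite{goz:leon:2010}.
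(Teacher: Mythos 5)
Your argument is correct: the paper itself gives no proof of this proposition, quoting it directly from Corollary 3.4 of \cite{goz:leon:2010}, and what you write is precisely the standard Bobkov--G\"otze-type duality argument (Kantorovich--Rubinstein plus the Donsker--Varadhan variational formula, then Fenchel--Moreau to identify $\Phi^{**}=\Phi$) that underlies that corollary. The technical points you flag — truncation to bounded $1$-Lipschitz functions, finiteness of first moments, and lower semicontinuity of the convex increasing $\Phi$ with $\Phi(0)=0$ — are indeed routine and handled exactly as you indicate, so no gap remains.
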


Such transport-entropy inequalities are very attractive especially from a numerical point of view since they are related to the concentration of measure phenomenon which allows to establish non-asymptotic deviation estimates. The three next results put an emphasis on this point. Suppose that $(X_n)_{n\geq 1}$ is a sequence of independent and identically distributed $\R^d$-valued random variables with common law $\mu$.
 
\begin{COROL}If $\mu$ satisfies $T_\Phi$ then for all 1-Lipschitz function $f$ and for all $r\geq0$, for all $M\geq 1$, one has
$$
\P\left(|\frac1M \sum_{k=1}^M f(X_k)-\E[f(X_1)] | \geq r\right) \leq 2\exp(-M\Phi(r))
$$
\end{COROL}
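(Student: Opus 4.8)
The plan is to derive the stated concentration inequality from the transport-entropy inequality $T_\Phi$ by a standard tensorization argument combined with the Laplace transform characterization of $T_\Phi$ recalled in the Proposition just above. First I would observe that since the $(X_k)_{k\geq1}$ are i.i.d. with common law $\mu$, the empirical mean functional $F_M(x_1,\dots,x_M) := \frac1M\sum_{k=1}^M f(x_k)$ is, for $f$ 1-Lipschitz, a $\frac1M$-Lipschitz function of each coordinate, and more precisely it is $\frac{1}{\sqrt M}$-Lipschitz on $(\R^d)^M$ for the $\ell^2$ product metric (though for the argument below only the one-coordinate-at-a-time bound is needed). The strategy is to control $\E[\exp(\lambda M (F_M - \E F_M))] = \E[\exp(\lambda\sum_k(f(X_k)-\E f(X_1)))]$, which by independence factorizes as $\prod_{k=1}^M \E[\exp(\lambda(f(X_k)-\E f(X_1)))]$.

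Next I would apply the Laplace transform characterization: since $\mu$ satisfies $T_\Phi$, for the 1-Lipschitz function $f$ and every $\lambda\geq0$ we have $\int \exp(\lambda f)\,d\mu \leq \exp(\lambda\int f\,d\mu + \Phi^*(\lambda))$, i.e. $\E[\exp(\lambda(f(X_1)-\E f(X_1)))] \leq \exp(\Phi^*(\lambda))$. Hence $\E[\exp(\lambda M(F_M-\E F_M))] \leq \exp(M\Phi^*(\lambda))$. Then by the exponential Markov (Chernoff) inequality, for every $r\geq0$ and $\lambda\geq0$,
\begin{align*}
\P\left(F_M - \E[f(X_1)] \geq r\right) &= \P\left(M(F_M-\E F_M) \geq Mr\right) \leq \exp(-\lambda M r)\,\E[\exp(\lambda M(F_M-\E F_M))] \\
&\leq \exp\left(-M(\lambda r - \Phi^*(\lambda))\right).
\end{align*}
Optimizing over $\lambda\geq0$ gives $\P(F_M - \E[f(X_1)] \geq r) \leq \exp(-M\sup_{\lambda\geq0}(\lambda r - \Phi^*(\lambda))) = \exp(-M\,\Phi^{**}(r))$, and since $\Phi$ is convex, increasing with $\Phi(0)=0$ (so in particular lower semicontinuous after the standard convention), biconjugation gives $\Phi^{**}=\Phi$, whence $\P(F_M - \E[f(X_1)] \geq r) \leq \exp(-M\Phi(r))$.

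Finally, to get the two-sided bound with the absolute value I would apply the same argument to the 1-Lipschitz function $-f$ (which also satisfies the Laplace transform bound because $T_\Phi$ quantifies over all 1-Lipschitz functions), yielding $\P(\E[f(X_1)] - F_M \geq r) \leq \exp(-M\Phi(r))$ as well; a union bound over the two events then produces the factor $2$ and the claimed inequality $\P(|F_M - \E[f(X_1)]| \geq r) \leq 2\exp(-M\Phi(r))$. I do not expect a genuine obstacle here: the only point requiring a little care is the biconjugation step $\Phi^{**}=\Phi$, which relies on $\Phi$ being convex, nonnegative and lower semicontinuous — properties guaranteed by the standing hypothesis that $\Phi$ is convex, increasing with $\Phi(0)=0$; one should also note that restricting the Legendre transforms to $\R_+$ (rather than all of $\R$) is harmless because $r\geq0$ and $\Phi$ is increasing, so the suprema are attained on $\R_+$. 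Alternatively, one can bypass biconjugation entirely by keeping the bound in the form $\exp(-M\sup_{\lambda\geq0}(\lambda r-\Phi^*(\lambda)))$ and invoking directly the equivalence, stated in the cited Corollary 3.4 of \cite{goz:leon:2010}, between $T_\Phi$ and the dimension-free Gaussian-type concentration it entails.
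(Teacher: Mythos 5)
Your proof is correct, and it is exactly the standard argument that the paper itself leaves implicit (it states this corollary without proof as a consequence of the Laplace-transform characterization in the preceding Proposition): factorize $\E[\exp(\lambda\sum_k(f(X_k)-\E f(X_1)))]$ by independence, bound each factor by $\exp(\Phi^*(\lambda))$, apply Chernoff, and recover $\Phi$ by monotone biconjugation, with the factor $2$ coming from applying the same bound to $-f$. Your side remarks are also sound: since $\Phi$ is finite, convex and increasing on $\R_+$ with $\Phi(0)=0$ it is continuous, so $\Phi^{**}=\Phi$ holds for the monotone conjugates restricted to $\R_+$, and no further care is needed.
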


\begin{PROP}
If $\mu$ satisfies $T_\Phi$ then the empirical measure $\mu^{n}$ defined as $\mu^{n}= \frac{1}{n} \sum_{k=1}^n \delta_{X_k}$ satisfies the following concentration bound
$$
\P\left( W_1(\mu^n,\mu)\geq \E[W_1(\mu^n,\mu)]+r\right)\leq \exp\left(-n\Phi(r)\right).
$$

\noindent where for $x \in \R^d$, $\delta_{x}$ stands for the Dirac mass at point $x$.
\end{PROP}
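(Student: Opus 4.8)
The plan is to deduce the empirical-measure concentration bound from the transport-entropy inequality $T_\Phi$ via the standard Gozlan--L\'eon tensorization/Marton argument, which I outline below. The key observation is that the map $\nu \mapsto W_1(\nu,\mu)$ is itself $1$-Lipschitz with respect to $W_1$, so that $\mu^n \mapsto W_1(\mu^n,\mu)$ is a well-behaved function of the sample.

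\medskip

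\noindent\textbf{Step 1: Reduce to a Laplace-transform bound on the product measure.} By the Proposition following the definition of $T_\Phi$ (Corollary 3.4 of \cite{goz:leon:2010}), the inequality $T_\Phi$ for $\mu$ is equivalent to the dual statement $\int \exp(\lambda f)\,d\mu \leq \exp(\lambda\int f\,d\mu + \Phi^*(\lambda))$ for every $1$-Lipschitz $f$ and every $\lambda\ge 0$. The first task is therefore to show that $\mu^{\otimes n}$ on $(\R^d)^n$ satisfies $T_{n\Phi}$, i.e. that for every function $F:(\R^d)^n\to\R$ that is $1$-Lipschitz for the normalized metric $d_n(\mathbf{x},\mathbf{y}) := \frac1n\sum_{k=1}^n |x_k-y_k|$, one has $\int \exp(\lambda F)\,d\mu^{\otimes n} \leq \exp(\lambda \int F\,d\mu^{\otimes n} + n\Phi^*(\lambda/n))$ for all $\lambda\ge0$. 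This is the classical tensorization property of transport-entropy inequalities: writing $F$ as a telescoping sum over coordinates, conditioning on $x_1,\dots,x_{k-1}$, applying the one-dimensional dual inequality in the $k$-th variable (each conditional slice being $1$-Lipschitz in $x_k$), and iterating. One then checks $(n\Phi^*(\cdot/n))^* = n\Phi$, so $\mu^{\otimes n}$ satisfies $T_{n\Phi}$ for the metric $d_n$.

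\medskip

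\noindent\textbf{Step 2: Identify $W_1(\mu^n,\mu)$ as a Lipschitz function of the sample.} Next I would verify that the map $\Psi:(\R^d)^n\to\R_+$, $\Psi(x_1,\dots,x_n) = W_1\big(\frac1n\sum_{k=1}^n\delta_{x_k},\mu\big)$ is $1$-Lipschitz with respect to $d_n$. This follows from the triangle inequality for $W_1$ together with the elementary bound $W_1\big(\frac1n\sum\delta_{x_k},\frac1n\sum\delta_{y_k}\big)\le \frac1n\sum_k|x_k-y_k|$, obtained by using the coupling that sends $x_k$ to $y_k$.

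\medskip

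\noindent\textbf{Step 3: Markov/Chernoff with the tensorized inequality.} Applying the dual form of $T_{n\Phi}$ from Step 1 to $F=\Psi$ from Step 2, for all $\lambda\ge0$,
$$
\E\big[\exp(\lambda W_1(\mu^n,\mu))\big] \le \exp\big(\lambda\,\E[W_1(\mu^n,\mu)] + n\Phi^*(\lambda/n)\big).
$$
By Markov's inequality, for $r\ge0$,
$$
\P\big(W_1(\mu^n,\mu)\ge \E[W_1(\mu^n,\mu)]+r\big) \le \exp\big(-\lambda r + n\Phi^*(\lambda/n)\big).
$$
Optimizing over $\lambda\ge0$ — equivalently setting $\rho=\lambda/n$ and using the definition of the monotone conjugate $\Phi = \Phi^{**}$ (valid since $\Phi$ is convex, increasing, $\Phi(0)=0$) — the right-hand side becomes $\exp\big(-n\sup_{\rho\ge0}\{\rho r - \Phi^*(\rho)\}\big) = \exp(-n\Phi(r))$, which is the claimed bound.

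\medskip

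\noindent\textbf{Main obstacle.} The only genuinely non-routine ingredient is the tensorization step (Step 1): one must handle the conditional Lipschitz property carefully and check that the conjugate of $n\Phi^*(\cdot/n)$ is indeed $n\Phi$ (so that the Chernoff optimization closes with the right constant), and confirm that the convexity/monotonicity hypotheses on $\Phi$ suffice for $\Phi^{**}=\Phi$ on $\R_+$. Steps 2 and 3 are short and standard. In fact, since this is a known result (it is precisely the inheritance of dimension-free concentration by empirical measures under $T_\Phi$), the cleanest write-up may simply invoke the tensorization theorem from \cite{goz:leon:2010} and then perform Steps 2--3 explicitly.
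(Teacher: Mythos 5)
Your proof is correct and follows exactly the standard route that the paper itself implicitly relies on (the paper states this proposition without proof, as a known consequence of $T_\Phi$ via the dual characterization quoted from \cite{goz:leon:2010}): tensorize the dual Laplace-transform bound over the i.i.d.\ sample, note that $(x_1,\dots,x_n)\mapsto W_1(\frac1n\sum_k\delta_{x_k},\mu)$ is $1$-Lipschitz for the normalized $\ell^1$ metric, and conclude by Chernoff, with the substitution $\rho=\lambda/n$ and $\Phi^{**}=\Phi$ giving the rate $n\Phi(r)$. No gaps; the conditional-Lipschitz tensorization step you flag as the main obstacle is indeed the only technical point, and it goes through as you describe since the $X_k$ are independent.
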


The quantity $\E[W_1(\mu,\mu^n)]$ will go to zero as $n$ goes to infinity, by convergence of empirical measures, but we still need quantitative bounds. The next result is an adaptation of a result of \cite{rach:rusch:1998} on similar bounds but for the distance $W_2$. For sake of completeness, we provide a proof in Appendix \ref{APPEND_A}.

\begin{PROP}
\label{PROP_CONT_W1}
Assume that $\mu$ has a finite moment of order $d+3$. Then, one has
$$
\E[W_1(\mu^n \mu)]\leq C(d,\mu) n^{-1/(d+2)}
$$

\noindent  where 
$$
C(d,\mu) := 4\sqrt{d} + 2\sqrt{\int_{\R^d}{(1 + |x|^{d+1})^{-1}dx}}\sqrt{2^{-2d} + 2^{3-d}\int{|y|^{d+3}\mu(dy)} + 2^{3-d}d(d+3)!}.
$$
\end{PROP}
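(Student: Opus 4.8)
The plan is to estimate $\E[W_1(\mu^n,\mu)]$ by a single-scale discretization argument that trades the mesh size against the size of the fluctuations of the empirical measure, in the spirit of the $W_2$-estimates of \cite{rach:rusch:1998}. First I would fix a mesh $\varepsilon>0$, let $(Q_i)_{i\in\Z^d}$ be the partition of $\R^d$ into half-open cubes of side $\varepsilon$ with centers $(c_i)_{i\in\Z^d}$, and write $c(x)$ for the center of the cube containing $x$, so that $|x-c(x)|\le \tfrac{\sqrt d}{2}\varepsilon$. Since $\mu^n-\mu$ has zero total mass, the Kantorovich--Rubinstein duality can be restricted to $1$-Lipschitz $f$ with $f(0)=0$, and then
\begin{align*}
\int f\,d(\mu^n-\mu) &= \int (f-f\circ c)\,d(\mu^n-\mu)+\sum_i f(c_i)\big(\mu^n(Q_i)-\mu(Q_i)\big) \\
&\le \sqrt d\,\varepsilon+\sum_i |c_i|\,\big|\mu^n(Q_i)-\mu(Q_i)\big|,
\end{align*}
using $|f(x)-f(c(x))|\le\tfrac{\sqrt d}{2}\varepsilon$ and $|f(c_i)|=|f(c_i)-f(0)|\le|c_i|$. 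Taking the supremum over $f$ gives $W_1(\mu^n,\mu)\le \sqrt d\,\varepsilon+\sum_i |c_i|\,|\mu^n(Q_i)-\mu(Q_i)|$.

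Next I would take expectations. Since $n\,\mu^n(Q_i)$ is binomial with parameters $(n,\mu(Q_i))$, Jensen's inequality gives $\E|\mu^n(Q_i)-\mu(Q_i)|\le \sqrt{\mathrm{Var}(\mu^n(Q_i))}\le \sqrt{\mu(Q_i)/n}$, hence $\E\big[\sum_i|c_i|\,|\mu^n(Q_i)-\mu(Q_i)|\big]\le n^{-1/2}\sum_i|c_i|\sqrt{\mu(Q_i)}$. The countably many cubes are then handled by Cauchy--Schwarz with the weight $w_i:=1+|c_i|^{d+1}$:
\[
\sum_i|c_i|\sqrt{\mu(Q_i)}\le\Big(\sum_i w_i^{-1}\Big)^{1/2}\Big(\sum_i|c_i|^2\, w_i\,\mu(Q_i)\Big)^{1/2}.
\]
The first factor is a Riemann-type sum, bounded by $1+\varepsilon^{-d}\int_{\R^d}(1+|x|^{d+1})^{-1}\,dx$, the integral being finite precisely because the exponent $d+1$ exceeds the dimension; this is the origin of the factor $\int_{\R^d}(1+|x|^{d+1})^{-1}dx$ in $C(d,\mu)$. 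The second factor equals $\E_\mu[|c(X)|^2+|c(X)|^{d+3}]$, and using $|c(X)|\le|X|+\tfrac{\sqrt d}{2}\varepsilon$ together with the binomial expansion of $(|X|+\tfrac{\sqrt d}{2}\varepsilon)^{d+3}$ and a crude bound of the lower moments by the moment of order $d+3$, it is controlled by $2^{-2d}+2^{3-d}\int|y|^{d+3}\mu(dy)+2^{3-d}d(d+3)!$. This is exactly the step where the moment hypothesis of order $d+3$ enters.

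Finally, combining the two steps yields $\E[W_1(\mu^n,\mu)]\le \sqrt d\,\varepsilon+ K(d,\mu)\,n^{-1/2}\varepsilon^{-d/2}$ for an explicit $K(d,\mu)$ made of the two Cauchy--Schwarz factors. Choosing $\varepsilon=n^{-1/(d+2)}$ balances the two terms, since $n^{-1/2}\varepsilon^{-d/2}=n^{-1/(d+2)}$, and collecting the constants gives the claimed $\E[W_1(\mu^n,\mu)]\le C(d,\mu)\,n^{-1/(d+2)}$.

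The only delicate point is the final accounting of the constants: converting the Riemann-sum comparison $\sum_i w_i^{-1}\le 1+\varepsilon^{-d}\int(1+|x|^{d+1})^{-1}dx$ and the moment expansion of $\E_\mu[|c(X)|^2+|c(X)|^{d+3}]$ into \emph{exactly} the stated expression for $C(d,\mu)$ (including the numerical factors $4\sqrt d$, $2$, $2^{-2d}$, $2^{3-d}$). More conceptually, one should note that in the absence of compactness it is the moment assumption that plays the role of boundedness and is what makes the weighted sum over the lattice of cubes summable.
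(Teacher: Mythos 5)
Your route (fixed-mesh cube discretization, binomial variance bound, weighted Cauchy--Schwarz with weight $(1+|c_i|^{d+1})^{-1}$, then optimize $\varepsilon=n^{-1/(d+2)}$) is genuinely different from the paper's proof, which instead smooths both measures by convolution with the product exponential kernel $e_\sigma^{\otimes d}$, bounds $W_1(\mu^n,\mu)\le W_1(\mu^{n,\sigma},\mu^\sigma)+\sqrt{8d}\,\sigma$, passes to densities via Kantorovich--Rubinstein, applies a Carlson-type inequality with the same weight $(1+|x|^{d+1})^{-1}$, uses $\mathrm{Var}$ of the empirical density together with the identity $e_\sigma^{\otimes d}(x)^2=2^{-2d}\sigma^{-d}e_{\sigma/2}^{\otimes d}(x)$, and finally takes $\sigma=n^{-1/(d+2)}$. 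Both arguments trade a bias term of order $\varepsilon$ (resp. $\sigma$) against a fluctuation term of order $n^{-1/2}\varepsilon^{-d/2}$ (resp. $n^{-1/2}\sigma^{-d/2}$), and your balancing does give the correct rate $n^{-1/(d+2)}$; in that sense the skeleton of your proof is sound and arguably more elementary (no regularization, no density computations).

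However, as a proof of the proposition \emph{as stated}, with the explicit constant $C(d,\mu)$, there is a genuine gap, and it is not merely bookkeeping. First, the Riemann-sum comparison $\sum_i(1+|c_i|^{d+1})^{-1}\le 1+\varepsilon^{-d}\int(1+|x|^{d+1})^{-1}dx$ is false as written: on a cube $Q_i$ the points $x$ with $|x|\ge|c_i|$ satisfy $(1+|c_i|^{d+1})^{-1}\ge(1+|x|^{d+1})^{-1}$, so the pointwise comparison goes the wrong way and one must compare with $(1+(|x|-\tfrac{\sqrt d}{2}\varepsilon)_+^{d+1})^{-1}$ instead, which costs an extra dimension-dependent factor. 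Second, and more importantly, your claim that $\E_\mu[|c(X)|^2+|c(X)|^{d+3}]$ is ``controlled by $2^{-2d}+2^{3-d}\int|y|^{d+3}\mu(dy)+2^{3-d}d(d+3)!$'' is unsupported and will not come out of your computation: in the paper those three terms are artifacts of the exponential smoothing, the $2^{-2d}$ arising from squaring the kernel ($e_\sigma^2=2^{-2d}\sigma^{-d}e_{\sigma/2}$) and the $d(d+3)!$ from the $(d+3)$-moment of the exponential law $e_1^{\otimes d}$, neither of which has any counterpart in the cube decomposition (your expansion of $(|X|+\tfrac{\sqrt d}{2}\varepsilon)^{d+3}$ produces binomial coefficients and lower moments of $\mu$, a constant of a different form). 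So your argument proves $\E[W_1(\mu^n,\mu)]\le C'(d,\mu)\,n^{-1/(d+2)}$ for some explicit but different $C'(d,\mu)$, which captures the essential content (the rate), but it cannot be completed to yield the stated $C(d,\mu)$ without switching to the smoothing argument that actually generates that constant.
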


\noindent In view of Kantorovich-Rubinstein duality formula, namely
$$
W_1(\mu,\nu) = \sup\left\{ \int f d\mu - \int f d\nu: [f]_1 \leq 1  \right\}
$$

\noindent where $[f]_1$ denotes the Lipschitz-modulus of $f$, the latter result provides the following concentration bounds
$$
\forall r\geq0, \ \forall M\geq 1, \ \ \P\left(\sup_{f: [f]_1\leq 1} \left(\frac{1}{M} \sum_{k=1}^M f(X_k) - \E[f(X_1)] \right)\geq  C(d,\mu) M^{-1/(d+2)}+r\right) \leq \exp\left(-M\Phi(r)\right).
$$

Similar results were first obtained for different concentration regimes by Bolley, Guillin, Villani \cite{boll:guil:vill:07} relying on a non-asymptotic version of Sanov's Theorem. Some of these results have also been derived by Boissard \cite{bois:11} using concentration inequalities, and were also extended to ergodic Markov chains up to some contractivity assumptions in the Wasserstein metric on the transition kernel. 

Some applications are proposed in  \cite{boll:guil:vill:07}. Such results can indeed provide non-asymptotic deviation bounds for the estimation of the density of the invariant measure of a Markov chain. Let us note that the (possibly large) constant $C(d,\mu)$ appears as a trade-off to obtain uniform deviations over all Lipschitz functions. 

As a consequence of  the transport-entropy inequalities obtained for the laws at a given time step of Euler like schemes and stochastic approximation algorithm, we will derive non-asymptotic deviation bounds in the Wasserstein metric.

\mysection{Main Results}
 
\subsection{Euler like schemes and diffusions}
\begin{THM}[Transport-Entropy inequalities for Euler like schemes]
\label{THM_TE_EULER}
Denote by $X^{\Delta,0,x}_T$ the value at time $T$ of the scheme \eqref{EULER} associated to the diffusion \eqref{EDS} starting at point $x$ at time $0$. Denote the Lipschitz modulus of $b$ and $\sigma$ appearing in the diffusion process \eqref{EDS} by $[b]_1$ and $[\sigma]_1$, respectively and by $\mu^{\Delta,0,x}_T$ the law of $X^{\Delta,0,x}_T$. Assume that  the innovations $(U_i)_{i\geq 1} $ in \eqref{EULER} satisfy \eqref{CONC_GAUSS} for some $\beta>0 $ and that the coefficients $b,\sigma$ satisfy \A{HS} and \A{HD$_{\alpha}$} for $\alpha \in [\frac12,1]$. 

Then, $\mu^{\Delta,0,x}_T$ satisfies $T_{\Phi^{*}_{\alpha}}$ with $\Phi^{*}_{\alpha}(\lambda)=\sup_{\rho \geq 0}\left\{\lambda \rho - \Phi_{\alpha}(\rho)\right\}$

\noindent with: 
\begin{itemize}

\item If $ \alpha\in (\frac12,1]$, for all $\rho\geq0$
$$
\Phi_{\alpha}(\rho) = \Psi_{\alpha}(T,\Delta,b,\sigma, x)(\rho^2 \vee \rho^{\frac{2\alpha}{2\alpha-1}}),
$$

\noindent with $\Psi_{\alpha}(T,\Delta,b,\sigma, x) = K_{3.1}(\varphi(T,b,\sigma,\Delta)^{2} \vee\varphi(T,b,\sigma,\Delta)^{\frac{\alpha}{2\alpha-1}})$, $\varphi(T,b,\sigma,\Delta)= C_{\sigma}\beta\frac{(1+C(\Delta)\Delta)}{4C(\Delta)}e^{3C(\Delta) T}$, $C(\Delta) := 2[b]_1 + [\sigma]^2_{1} + \Delta [b]^2_1$ and the constant $K_{3.1}$ being defined in Corollary \ref{CONTLAPLACEBIS}.

%
%

\item If $\alpha=\frac12$, for all $\rho \in [0,\varphi(T,b,\sigma,\Delta)^{-1/2} \lambda_{3.2})$
$$
\Phi_{1/2}(\rho) = K_{3.2} \frac{(\rho\varphi(T,b,\sigma,\Delta)^{1/2} /\lambda_{3.2})^2}{1-(\rho \varphi(T,b,\sigma,\Delta)^{1/2} / \lambda_{3.2})}
$$

\noindent where the positive constants $\lambda_{3.2}$ and $K_{3.2}$ are defined in Corollary \ref{CONTLAPLACETER}.

\end{itemize}


%
%
%
%
 \end{THM}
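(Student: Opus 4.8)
The plan is to establish, via Proposition~2.2 of \cite{goz:leon:2010} recalled above, that it suffices to control the Laplace transform of $f(X^{\Delta,0,x}_T)$ for an arbitrary $1$-Lipschitz $f$, and to show that
$$
\E\left[\exp\bigl(\lambda f(X^{\Delta,0,x}_T)\bigr)\right]\leq \exp\left(\lambda \E[f(X^{\Delta,0,x}_T)] + \Phi^{*}_\alpha(\lambda)\right), \qquad \lambda\geq 0,
$$
so that $\mu^{\Delta,0,x}_T$ satisfies $T_{\Phi^{*}_\alpha}$ by duality. The starting point is inequality \eqref{first_step_to_conc_Euler}, which says that applying the last transition $P_{N-1}$ to $\exp(\lambda f)$ produces a factor $\exp(\tfrac14 C_\sigma\beta\Delta\,\lambda^2 V^{1-\alpha}(x))$ and replaces $f$ by $P_{N-1}(f)$, still Lipschitz by Proposition~\ref{Lipschitz_control_Euler}. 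Iterating this backward over $k=N-1,\dots,0$, I would obtain
$$
\E\left[\exp\bigl(\lambda f(X^{\Delta,0,x}_T)\bigr)\right]\leq \exp\left(\lambda \E[f(X^{\Delta,0,x}_T)] + \frac{C_\sigma\beta\Delta}{4}\sum_{k=0}^{N-1}[P_{0,k}(f)]_1^2\,P_{0,k}\bigl(V^{1-\alpha}\bigr)(x)\cdot\lambda_k^2\right),
$$
the subtlety being that at each step the running Lipschitz constant $[P_{0,k}(f)]_1$ contributes, and the term $V^{1-\alpha}$ produced at step $k$ must itself be pushed through the remaining kernels $P_{0,k}$; the Lipschitz bounds on the $P_{k,p}$ are controlled by $C(\Delta)=2[b]_1+[\sigma]^2_1+\Delta[b]^2_1$ and an exponential Gronwall-type factor $e^{3C(\Delta)T}$, which is where $\varphi(T,b,\sigma,\Delta)$ comes from.

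The genuinely new ingredient, and the main obstacle, is controlling the quantities $P_{0,k}(V^{1-\alpha})(x) = \E_x[V^{1-\alpha}(X^\Delta_{t_k})]$ — equivalently, obtaining good bounds on $\E_x[\exp(\theta V^{1-\alpha}(X^\Delta_{t_k}))]$ or on moments of $V(X^\Delta_{t_k})$ — uniformly in $k\leq N$ and in $\Delta$. This is exactly the content of Propositions~\ref{CONT_LAPLACEV} and~\ref{CONT_LAPLACE} and Corollaries~\ref{CONTLAPLACEBIS}, \ref{CONTLAPLACETER} referenced in the statement: using the flow/Lyapunov structure $|\nabla V|^2\leq C_V V$ and $\tfrac12\|\nabla^2 V\|\leq\eta$ together with \A{HD$_\alpha$}, one propagates an exponential-integrability estimate for $V$ along the Euler scheme. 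For $\alpha=1$ the term $V^{1-\alpha}=1$ is trivial and one recovers the Gaussian regime directly; for $\alpha\in(1/2,1)$ the exponent $1-\alpha<1/2$ makes $V^{1-\alpha}$ subquadratic, so the exponential moments exist globally and one gets a mixed quadratic/superquadratic rate, whence $\Phi_\alpha(\rho)=\Psi_\alpha(\rho^2\vee\rho^{2\alpha/(2\alpha-1)})$; for $\alpha=1/2$ one only controls $\E_x[\exp(\theta V^{1/2}(X^\Delta_{t_k}))]$ for $\theta$ below a finite threshold, which forces the restriction $\rho<\varphi^{-1/2}\lambda_{3.2}$ and the fractional-linear form of $\Phi_{1/2}$.

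Concretely I would proceed as follows: (i) fix $f$ $1$-Lipschitz and $\lambda\geq 0$; (ii) prove the one-step Lipschitz-propagation bound $[P_k(g)]_1\leq (1 + C(\Delta)\Delta/\text{something})[g]_1$ type estimate and iterate it to get $[P_{k,p}(f)]_1\leq e^{C(\Delta)(t_p-t_k)}$ (this is Proposition~\ref{Lipschitz_control_Euler}); (iii) apply \eqref{first_step_to_conc_Euler} recursively, collecting at step $k$ the weight $\tfrac14 C_\sigma\beta\Delta [P_{k+1,N}(f)\circ\cdots]_1^2$ times $P_{0,k}(V^{1-\alpha})(x)\lambda^2$; (iv) invoke Corollary~\ref{CONTLAPLACEBIS} (resp. \ref{CONTLAPLACETER}) to bound $\sum_k \Delta\, e^{2C(\Delta)(T-t_k)} P_{0,k}(V^{1-\alpha})(x)$ by $\varphi(T,b,\sigma,\Delta)$ times the appropriate function of $\lambda$ — for $\alpha>1/2$ a polynomial control yielding $\Phi^*_\alpha$, for $\alpha=1/2$ a control valid only for $\lambda$ in a bounded range, producing the fractional-linear $\Phi_{1/2}$; (v) conclude that the Laplace transform of $f(X^{\Delta,0,x}_T)$ is dominated by $\exp(\lambda\E[f]+\Phi^*_\alpha(\lambda))$, and apply Proposition~2.2 to deduce $T_{\Phi^*_\alpha}$. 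The bookkeeping of the constants $K_{3.1}$, $K_{3.2}$, $\lambda_{3.2}$ is routine once (iv) is in hand; the delicate point throughout is that the "diffusion term'' $V^{1-\alpha}(X^\Delta_{t_k})$ is itself random and must be integrated against the law of the scheme, which is why the uniform-in-$\Delta$ exponential-integrability estimates are doing all the work.
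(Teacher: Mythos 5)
Your overall architecture matches the paper's: reduce to a Laplace-transform bound by the dual formulation, start from the one-step inequality \eqref{first_step_to_conc_Euler}, control the Lipschitz modulus of the iterated kernels (Proposition \ref{Lipschitz_control_Euler}), and feed in the exponential-integrability estimates for $V^{1-\alpha}$ along the scheme (Proposition \ref{CONT_LAPLACEV}, Corollaries \ref{CONTLAPLACEBIS} and \ref{CONTLAPLACETER}), distinguishing $\alpha\in(\tfrac12,1]$ from $\alpha=\tfrac12$. However, there is a genuine gap at the heart of the iteration, i.e. at the step corresponding to the paper's Proposition \ref{CONT_LAPLACE}. The displayed inequality in your plan, with exponent $\lambda\E[f(X^\Delta_T)]+\tfrac{C_\sigma\beta\Delta}{4}\sum_k [P_{0,k}(f)]_1^2\,P_{0,k}(V^{1-\alpha})(x)\,\lambda_k^2$, does not follow from iterating \eqref{first_step_to_conc_Euler}. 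After one application of the one-step bound you face $\E_x\bigl[\exp\bigl(\lambda P_{k+1,N}(f)(X^\Delta_{t_k})+\tfrac{C_\sigma\beta\Delta}{4}\lambda^2[P_{k+1,N}(f)]_1^2 V^{1-\alpha}(X^\Delta_{t_k})\bigr)\bigr]$: the Lipschitz part and the state-dependent variance part are coupled inside one expectation, and you may neither replace the random factor $V^{1-\alpha}(X^\Delta_{t_k})$ by its mean $P_{0,k}(V^{1-\alpha})(x)$ inside the exponential nor push it through the remaining kernels linearly. The paper decouples the two factors by H\"older's inequality with conjugate exponents $p=1+C(\Delta)\Delta$, $q=p/(p-1)$ at every step; the main term then carries $\lambda p$, accumulating to $\lambda p^N\leq \lambda e^{C(\Delta)T}$ (this, together with the factor $q\,p^{2N}(1+C(\Delta)\Delta)^N$, is where $\varphi(T,b,\sigma,\Delta)$ and $e^{3C(\Delta)T}$ actually come from), while the error factors appear with weights $\tfrac{1}{q p^k}$ whose sum is at most $1$, so that by Jensen/convexity the whole product is dominated by $\sup_{0\leq n\leq N}\log\E_x\bigl[\exp\bigl(\varphi(T,b,\sigma,\Delta)\lambda^2 V^{1-\alpha}(X^\Delta_{t_n})\bigr)\bigr]$. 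This is an exponential moment, which is precisely what Corollaries \ref{CONTLAPLACEBIS} and \ref{CONTLAPLACETER} control; your step (iv), which bounds a sum of first moments $P_{0,k}(V^{1-\alpha})(x)$, is neither what the iteration produces nor sufficient to bound the Laplace transform of $f(X^\Delta_T)$.

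Apart from this, your account of why the two regimes differ (global exponential integrability of $V^{1-\alpha}$ for $\alpha>\tfrac12$ giving the mixed rate $\rho^2\vee\rho^{2\alpha/(2\alpha-1)}$; a Laplace transform finite only for $\lambda<\lambda_{3.2}$ when $\alpha=\tfrac12$, forcing the restricted domain and fractional-linear form of $\Phi_{1/2}$) is faithful to the paper, and the final dualization step is as you describe. But the H\"older/Jensen decoupling with the step-size-dependent exponents is the technically decisive ingredient of the proof, and it is absent from your argument as written.
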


Note that in the above theorem, we do not need any non-degeneracy condition on the diffusion coefficient. 

In the case $\alpha \in  (\frac12,1]$, one easily gets the following explicit formula: 

\begin{itemize}
\item If $\lambda \in [0, 2\Psi]$, then $\Phi^{*}_{\alpha}(\lambda) = \frac{1}{4\Psi} \lambda^2$;

\item If $\lambda \in [\frac{2\alpha}{2\alpha - 1}\Psi, +\infty)$, then $\Phi^{*}_{\alpha}(\lambda) = \frac{1}{2\alpha}\left(\frac{2\alpha -1}{2\alpha \Psi}\right)^{2\alpha -1} \lambda^{2\alpha}$;

\item If $\lambda \in (2\Psi, \frac{2\alpha}{2\alpha - 1}\Psi)$,then $\Phi^{*}_{\alpha}(\lambda) = \lambda - \Psi$.
\end{itemize}

Let us note that the linear behavior of $\Phi^{*}_{\alpha}$ on a small interval is due to the fact that $\Phi_\alpha$ is not $\mathcal{C}^1$. One may want to replace $\rho^2 \vee \rho^{\frac{2\alpha}{2\alpha-1}}$ by $\rho^2 + \rho^{\frac{2\alpha}{2\alpha-1}}$ (up to a factor 2) in the expression of $\Phi_\alpha$. However, in this case, an explicit expression for $\Phi^{*}_{\alpha}$ does not exist (except for the case $\alpha=1$) and only its asymptotic behavior  can be derived so that one is led to compute it numerically in practical situations. 
 
In the case $\alpha = 1/2$, tedious but simple computations show that
\begin{align}
\Phi^{*}_{1/2}(\lambda) &= \left( \left(1 + \frac{\lambda_{3.2}}{K_{3.2} \varphi(T,b,\sigma,\Delta)^{1/2}}  \lambda\right)^{\frac{1}{2}} -1\right)^{2}. \notag
\end{align}


%

\noindent This behavior corresponds to a concentration profile that is Gaussian at short distance, and exponential at large distance.
\begin{COROL}{(Non-asymptotic deviation bounds)}
\label{COROL_DEV_INEQ}
Under the same assumptions as Theorem \ref{THM_TE_EULER}, one has:

\begin{itemize}

\item for all real-valued 1-Lipschitz function $f$ defined on $\R^d$, for all $\alpha \in [1/2,1]$ for all $M\geq 1$ and all $r\ge 0$, \begin{eqnarray*} 
&&\P_x\left(|\frac{1}{M}\bsum{k=1}^M f((X_T^\Delta)^k)-\E_x[f(X_T^\Delta)]|\ge r\right)\le 2\exp(-M\Phi^{*}_{\alpha}(r)),
\end{eqnarray*}

\item for all $\alpha \in [1/2,1]$, for all $M\geq 1$ and all $r\ge 0$, 
$$
 \P_x\left(\sup_{f: [f]_1\leq 1} \left(\frac{1}{M} \sum_{k=1}^M f((X_T^\Delta)^k) - \E_x[f(X_T^\Delta)] \right)\geq  C(d,\mu^{\Delta,0,x}_T) M^{-1/(d+2)}+r\right) \leq \exp\left(-M\Phi^{*}_{\alpha}(r)\right),
$$

\end{itemize}

\noindent where the $((X_T^\Delta)^k)_{1\leq k \leq M}$ are $M$ independent copies of the scheme \eqref{EULER} starting at point $x$ at time $0$ and evaluated at time $T$. 
\end{COROL}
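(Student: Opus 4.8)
The plan is to deduce the deviation bounds directly from Theorem \ref{THM_TE_EULER}, which asserts that $\mu^{\Delta,0,x}_T$ satisfies $T_{\Phi^*_\alpha}$, by invoking the abstract consequences of a transport-entropy inequality that were recalled in the introduction. First I would observe that the $M$ independent copies $((X_T^\Delta)^k)_{1\le k\le M}$ form an i.i.d.\ sequence with common law $\mu^{\Delta,0,x}_T$, so the general machinery for i.i.d.\ samples drawn from a law satisfying $T_\Phi$ applies verbatim with $\Phi=\Phi^*_\alpha$ (here $\Phi^*_\alpha$ plays the role of the convex increasing function "$\Phi$" in Definition \ref{...} and Corollary \ref{...}; one checks it is indeed convex, increasing and vanishes at $0$, which is immediate from the explicit piecewise formulas displayed after the theorem, or abstractly since a monotone conjugate of a convex increasing function with value $0$ at $0$ has these properties).

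For the first bullet, I would apply Corollary \ref{...} (the $2\exp(-M\Phi(r))$ bound for empirical means of $1$-Lipschitz functions) with $\mu=\mu^{\Delta,0,x}_T$, $\Phi=\Phi^*_\alpha$ and $X_k=(X_T^\Delta)^k$: since $\mu^{\Delta,0,x}_T$ satisfies $T_{\Phi^*_\alpha}$ by Theorem \ref{THM_TE_EULER}, we immediately get, for every real-valued $1$-Lipschitz $f$ on $\R^d$, every $M\ge 1$ and every $r\ge 0$,
$$
\P_x\!\left(\Big|\frac1M\sum_{k=1}^M f((X_T^\Delta)^k)-\E_x[f(X_T^\Delta)]\Big|\ge r\right)\le 2\exp(-M\Phi^*_\alpha(r)).
$$
For the second bullet, I would combine the Kantorovich--Rubinstein duality formula with Proposition \ref{PROP_CONT_W1} and the concentration bound of Proposition \ref{...} for the empirical measure. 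Writing $\mu^M$ for the empirical measure $\frac1M\sum_{k=1}^M\delta_{(X_T^\Delta)^k}$, duality gives $\sup_{[f]_1\le1}\big(\frac1M\sum_k f((X_T^\Delta)^k)-\E_x[f(X_T^\Delta)]\big)=W_1(\mu^M,\mu^{\Delta,0,x}_T)$ (using that this supremum is nonnegative, e.g.\ by including constants, it equals the Wasserstein distance rather than merely being bounded by it up to sign); then Proposition \ref{...} bounds $\P_x(W_1(\mu^M,\mu^{\Delta,0,x}_T)\ge \E_x[W_1(\mu^M,\mu^{\Delta,0,x}_T)]+r)$ by $\exp(-M\Phi^*_\alpha(r))$, and Proposition \ref{PROP_CONT_W1} controls the expectation by $C(d,\mu^{\Delta,0,x}_T)M^{-1/(d+2)}$. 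To legitimately apply Proposition \ref{PROP_CONT_W1} one needs $\mu^{\Delta,0,x}_T$ to have a finite moment of order $d+3$; this is where a small auxiliary argument is required — it follows from \A{HD$_\alpha$} together with \eqref{CONC_GAUSS} (the innovations have Gaussian tails, hence finite moments of all orders, and the recursive structure of \eqref{EULER} with linearly growing coefficients propagates finiteness of all polynomial moments, cf.\ the integrability estimates of Propositions \ref{CONT_LAPLACEV} and \ref{CONT_LAPLACE}).

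The only genuinely non-routine point, and the main obstacle, is this moment/integrability verification ensuring the hypotheses of Proposition \ref{PROP_CONT_W1} hold for $\mu^{\Delta,0,x}_T$; everything else is a direct substitution into the already-established abstract results. I would therefore state explicitly that under \A{HS}, \A{HD$_\alpha$} and \eqref{CONC_GAUSS} the law $\mu^{\Delta,0,x}_T$ has finite moments of every order — a consequence of the exponential integrability of $V(X_T^\Delta)$ obtained in Proposition \ref{CONT_LAPLACEV} combined with $V>0$, $V\in\mathcal C^2$ and the polynomial growth control $|x|^2\le C(1+V(x))$ implicit in the coercivity-type bounds — and then conclude by the chain of implications above.

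\finpreuve
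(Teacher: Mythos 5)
Your proposal follows the same route as the paper, which gives no separate proof of this corollary precisely because it is a direct substitution of Theorem \ref{THM_TE_EULER} into the abstract consequences of $T_\Phi$ recalled in the introduction (the $2\exp(-M\Phi(r))$ bound for empirical means, the empirical-measure concentration bound, Kantorovich--Rubinstein duality and Proposition \ref{PROP_CONT_W1}), applied with $\Phi=\Phi^{*}_{\alpha}$ and $\mu=\mu^{\Delta,0,x}_T$; your verification that $\Phi^{*}_{\alpha}$ is convex, increasing and null at $0$ is the right hygiene. The only point to amend is your justification of the $(d+3)$-moment hypothesis of Proposition \ref{PROP_CONT_W1}: \A{HD$_{\alpha}$} does not provide any coercivity of $V$ (e.g.\ $V\equiv 1$ is admissible, in which case $|x|^2\le C(1+V(x))$ fails), so the route through $V$ is not licensed by the assumptions. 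The moment bound itself is immediate anyway: under \A{HS} the coefficients have linear growth and the innovations satisfy \eqref{CONC_GAUSS}, hence have finite moments of every order, so the finitely many iterations of \eqref{EULER} give $\E_x[|X^{\Delta}_T|^{p}]<\infty$ for all $p$; alternatively, Proposition \ref{CONT_LAPLACE} applied to the $1$-Lipschitz map $x\mapsto |x|$ yields exponential integrability of $|X^{\Delta}_T|$, which is more than enough. With that substitution your argument is complete.
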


\begin{REM}[Extension to smooth functions of a finite number of time step] The previous transport-inequalities and non-asymptotic bounds could be extended to smooth functions of a finite number of time step such as the maximum of a scalar Euler like scheme. In that case, it suffices to introduce the additional state variable $(M_{t_i}^\Delta)_{i\geq 1}:=(\max_{k\in \leftB 0,i\rightB}X_{t_k}^\Delta)_{i\geq 1} $. Now, the couple $(X_{t_i}^\Delta,M_{t_i}^\Delta)_{1 \leq i \leq N} $ is Markovian and similar arguments could be easily extended to the couple for Lipschitz functions of both variables.
\end{REM}

\begin{REM}[Transport-Entropy inequalities for the law of a diffusion process] The previous transport-inequalities and non-asymptotic bounds could be extended to the law at time $T$ of the diffusion process solution to \eqref{EDS} by passing to the limit $\Delta \rightarrow 0$. Indeed, it is well-known that under \A{HS}, one has $X^{\Delta}_T \stackrel{a.s.}{\longrightarrow} X_T$, as $\Delta \rightarrow 0$ and by Lebesgue theorem, one deduces from the first result of Corollary \ref{COROL_DEV_INEQ} that the empirical error (empirical mean) of $X_T$ itself satisfies a non-asymptotic deviation bound with a similar deviation function (just pass to the limit $\Delta \rightarrow 0$ in all constants). Then, using Corollary 5.1 in \cite{goz:leon:2010} (equivalence between deviation of the empirical mean and transport-entropy inequalities), one easily derives that the law of $X_T$ satisfies a similar transport-entropy inequalities when $\alpha \in (1/2,1]$.
\end{REM}

We want to point out that it is the growth of $\sigma$ that gives the concentration regime ranging from Gaussian concentration bound if $\alpha=1$ to exponential when $\alpha = \frac12$. However, in many popular models in finance, the diffusion coefficient is linear, for instance practitioners often have to deal with Black-Scholes like dynamics of the form 
$$
X_t=x_0+\int_0^t b(X_s)X_sds +\int_0^t\sigma(X_s)X_sdW_s
$$ 

\noindent for smooth, bounded coefficients $b,\sigma$. For the estimation of $\E_x[f(X_T^\Delta)] $ for a Lipschitz function $f:\R^d \rightarrow \R$, or even in more general situations, the estimation of $\E_x[f(X^\Delta)] $ for a Lipschitz function $f:\mathcal{C} \rightarrow \R$, where $\mathcal{C}:= \mathcal{C}([0,T],\R^d)$ stands for the space of $\R^d$-valued continuous functions on $[0,T]$, equipped with the uniform norm $||f||_{\infty}:=\sup_{0\leq t \leq T}|f(t)|$, the expected concentration is the log-normal one. To deal with the latter case, we consider the continuous Euler scheme $X^{c,\Delta}$ associated to \eqref{EDS} and writing
\begin{equation}
\forall t \in [0,T], \ X^{c,\Delta}_{t}=x+ \int_0^t b(\phi(s),X^{c,\Delta}_{\phi(s)}) ds + \int_0^t \sigma(\phi(s),X^{c,\Delta}_{\phi(s)}) dW_s, \ \ \ x\in \R^d.
\label{CONT_EULER}
\end{equation}

\noindent where we set $\phi(t):=t_i$ for $t_i\leq t < t_{i+1}$, $i\in \N$. The next result provides a general non-asymptotic deviation bound for the empirical error under very mild assumptions.
\begin{THM}[General non-asymptotic deviation bounds]
\label{GEN_CONC_EULER}
Denote by $X^{c,\Delta}:=(X^{c,\Delta}_{t})_{0\leq t \leq T}$ the path of the scheme \eqref{CONT_EULER} with step $\Delta$ starting from point $x$ at time $0$. Assume that $\forall t\in [0,T]$, the coefficients $b(t,.)$ and $\sigma(t,.)$ are continuous functions in $x$ and that they satisfy the linear growth assumption:
$$
 \forall x\in \R^d, \ \ \sup_{t \in [0,T]}|b(t,x)| \leq C_b (1+ |x|), \ \ \sup_{t\in [0,T]}Tr(a(t,x)) \leq C_{\sigma}(1+|x|^2).
$$

Then,  for all $1$-Lipschitz function $f:\mathcal{C}\rightarrow \R$, for all $M\in \N^{*}$, for all $r\geq0$, one has
\begin{equation*}{}
\P_x\left(|\frac{1}{M}\bsum{k=1}^M f((X^{c,\Delta})^k)-\E_x[f(X^{c,\Delta})]|\ge r\right) \leq \left\{
\begin{array}{l}
2\exp\left(-\frac{r^2M}{(2 (1+|x|))^{2}\exp(2\kappa(b,\sigma,T))}\right)
, \ \mbox{if } \ r \leq \frac{1}{\sqrt{M}} 2 (1+|x|) e^{\kappa(b,\sigma,T)} \vspace*{0.2cm} \\
  2 \exp\left(-\frac{1}{4 \kappa(b,\sigma,T)}\log\left(\frac{r^2M}{(2(1+|x|))^2}\right)^2\right), \ \mbox{otherwise}
\end{array}
\right.
\end{equation*} 

\noindent where $\kappa(b,\sigma,T):= 28(1+ (C_{\sigma}\vee C_{b}) T)$ and $((X^{c,\Delta})^k)_{1\leq k \leq M}$ are $M$ independent copies  of the scheme \eqref{CONT_EULER}. The result remains valid when one considers the path of the diffusion $X$ solution to \eqref{EDS} instead of  the continuous Euler scheme.

\end{THM}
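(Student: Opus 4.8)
The plan is to reduce the deviation bound for $\frac1M\sum_k f((X^{c,\Delta})^k)$ to a transport-entropy inequality for the law $\nu^{c,\Delta}_x$ of the whole path $X^{c,\Delta}$ on $\mathcal{C}$, exactly as in Corollary~\ref{COROL_DEV_INEQ}: by the tensorization property of transport-entropy inequalities and the characterization in Proposition~2.2, it suffices to prove that for every $1$-Lipschitz $f:\mathcal{C}\to\R$ and every $\lambda\geq0$,
$$
\E_x\big[\exp(\lambda f(X^{c,\Delta}))\big]\leq \exp\big(\lambda \E_x[f(X^{c,\Delta})]+\Phi^*(\lambda)\big)
$$
for a suitable $\Phi$ whose conjugate produces the log-normal profile in the statement. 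So the real work is a path-space Laplace-transform estimate. First I would set up the Markovian structure: the pair $(t\mapsto X^{c,\Delta}_t)$ is driven step-by-step, and one builds the chain on $\mathcal{C}([t_k,T])$ by concatenating Brownian bridges/segments, exactly the strategy used for \eqref{EULER} but now keeping the continuous interpolation. The key one-step estimate is that, conditionally on $X^{c,\Delta}_{t_k}=y$, the map $W\mapsto f\big(\text{path on }[t_k,T]\big)$ is Lipschitz in the driving Brownian increment with a modulus controlled by the size of $\sigma$ along the path; here the innovation is a genuine Gaussian (the Brownian motion), so \eqref{CONC_GAUSS} holds with $\beta=2$ for the Wiener measure on each bridge, and Gaussian concentration for Lipschitz functionals of Brownian motion is classical.

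The second ingredient is the a priori bound on the growth of the path. Using the linear growth of $b$ and $\sigma$, an application of Gronwall's lemma (in $L^2$, or better, a Gronwall estimate for the Laplace transform $\E_x[\exp(\mu\sup_{s\leq t}|X^{c,\Delta}_s|^2)]$ along the lines of Propositions~\ref{CONT_LAPLACEV}--\ref{CONT_LAPLACE}) gives that $\sup_{0\leq t\leq T}|X^{c,\Delta}_t|$ has a Gaussian tail with variance proxy of order $(1+|x|)^2 e^{c(1+(C_\sigma\vee C_b)T)}$. Concretely I expect an estimate of the form $\E_x[\exp(\theta \sup_{s\leq T}|X^{c,\Delta}_s|^2)]<\infty$ for $\theta$ small enough, with the blow-up threshold dictating the constant $\kappa(b,\sigma,T)$. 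Combining this with the one-step Gaussian concentration: iterating over $k=N-1,\dots,0$ the inequality
$$
P^{c}_{k}(\exp(\lambda f))(y)\leq \exp\Big(\lambda P^{c}_{k}(f)(y)+\frac{\lambda^2}{2}\,\Delta\,\big\|\sigma(t_k,y)\big\|^2\,(\text{Lip bound})\Big),
$$
and absorbing the $\sup|X^{c,\Delta}|^2$-dependent variance terms via the exponential-moment bound just established, one arrives at a bound of the form $\E_x[\exp(\lambda(f-\E f))]\leq \exp\big(g(\lambda)\big)$ where $g(\lambda)$ grows like $\lambda^2$ for small $\lambda$ (Gaussian regime) but, because the variance proxy itself carries an exponential-moment constraint rather than a uniform bound, only like $(\log\lambda)^2\cdot(\text{const})$-type behaviour at infinity after optimizing — this is precisely what yields the log-normal tail $\exp\big(-\tfrac{1}{4\kappa}(\log(\cdot))^2\big)$.

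The final step is the elementary Legendre-transform computation: from the cumulant bound $g(\lambda)$ one reads off $\Phi=g^*$, checks $\Phi$ is convex increasing with $\Phi(0)=0$, applies Corollary~\ref{COROL_DEV_INEQ}'s template (i.e. $\P(|\tfrac1M\sum f((X^{c,\Delta})^k)-\E f|\geq r)\leq 2e^{-M\Phi(r)}$), and matches the two branches ($r\leq \tfrac1{\sqrt M}2(1+|x|)e^{\kappa}$ versus $r$ larger) to the displayed formula. The extension to the true diffusion $X$ is then obtained, as in the Remark following Corollary~\ref{COROL_DEV_INEQ}, by letting $\Delta\to0$ using $X^{c,\Delta}\to X$ a.s. in $\mathcal{C}$ under \A{HS} (here one only needs continuity and linear growth, which suffice for tightness and identification of the limit) and Fatou/dominated convergence in the deviation inequality, the constants being continuous in $\Delta$ and in fact $\Delta$-free in the limiting bound.

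The main obstacle I anticipate is controlling the Laplace transform $\E_x[\exp(\theta\sup_{s\leq T}|X^{c,\Delta}_s|^2)]$ uniformly in $\Delta$ and tracking the explicit dependence on $x$ and $T$ through the Gronwall iteration: the naive Gronwall gives finiteness only for $\theta$ below a threshold that itself depends on the very constant one is trying to bound, so one must run the estimate carefully (e.g. on a short enough time grid, then patch), and it is this bookkeeping that fixes the precise value $\kappa(b,\sigma,T)=28(1+(C_\sigma\vee C_b)T)$ and the log-normal crossover point. The interplay between the Lipschitz modulus of $f$ on path space (which involves $\sigma$ evaluated along an unbounded path) and the exponential-integrability budget is the crux; once that is quantified, everything else is the by-now-standard tensorization-plus-Legendre machinery.
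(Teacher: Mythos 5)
There is a genuine gap, and it sits exactly at the point you flag as your ``second ingredient''. Under the assumptions of Theorem \ref{GEN_CONC_EULER} the diffusion coefficient is only of linear growth (think of the Black--Scholes example $dX_t=\sigma X_t dW_t$ mentioned just before the theorem): then $\sup_{t\le T}|X_t|$ has \emph{log-normal} tails, $\P(\sup_t|X_t|>u)\approx \exp(-c(\log u)^2)$, so $\E_x[\exp(\theta \sup_{s\le T}|X^{c,\Delta}_s|^2)]=+\infty$ for \emph{every} $\theta>0$, and even $\E_x[\exp(\lambda f(X^{c,\Delta}))]=+\infty$ for every $\lambda>0$ when $f$ is, say, a coordinate evaluation at time $T$ (a $1$-Lipschitz functional). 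Consequently no finite cumulant bound $g(\lambda)$ and no transport-entropy inequality of the form $\E_x[e^{\lambda f}]\le \exp(\lambda\E_x[f]+\Phi^*(\lambda))$ can hold on path space, so the whole reduction via Proposition 2.2, tensorization and a Legendre transform cannot be carried out: the log-normal deviation profile $\exp(-\frac{1}{4\kappa}(\log(\cdot))^2)$ is strictly outside the reach of exponential-Chebyshev arguments. Your iteration step, which absorbs the unbounded variance term $\|\sigma(t_k,y)\|^2$ into an exponential moment of $\sup|X^{c,\Delta}|^2$, therefore rests on an estimate that is false in the stated generality.

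The paper's proof avoids exponential moments entirely. Lemma \ref{LEM_CONT_DIFF} establishes, via It\^o's formula applied to $(1+|x|^2)^{p}$, Gronwall's lemma and Doob's inequality, the polynomial moment bound $\E_x[\sup_{t\le T}|X_t|^{2p}]\le C(1+|x|)^{2p}\exp(cp^2(1+(C_b\vee C_\sigma)T))$ for all $p\ge1$, the crucial feature being the $e^{cp^2}$ growth in the moment order. Then $|f(X)-\E_x f(X)|\le 2\|X\|_\infty$ gives the same bound for the centered functional, Rosenthal's inequality (with the explicit constant $C_{2p}\le (2p)^{2p}\le e^{2p^2}$) controls the $2p$-th moment of the sum of the $M$ independent copies, Chebyshev at order $2p$ gives $2\exp(-\varphi(p))$ with $\varphi(p)=-\kappa p^2+p\log(r^2M/(2(1+|x|))^2)$, and optimizing over the \emph{moment order} $p\ge1$ produces the $\exp(-\frac{1}{4\kappa}\log(\cdot)^2)$ branch; a separate argument with $p\in[0,1]$ (Jensen plus the second-moment Rosenthal bound) yields the Gaussian branch for small $r$. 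If you want to salvage your plan, that optimization over polynomial moments is the mechanism you need in place of the Laplace-transform machinery; also note the paper proves the bound directly for $X$ (the Euler scheme case being analogous), so no limiting argument in $\Delta$ is required.
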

\subsection{Stochastic approximation algorithms}
 \begin{THM}[Transport-Entropy inequalities for stochastic approximation algorithms]
 \label{THM_TE_SA}
Let $N\in \N^*$. Assume that the function $H$ of the recursive procedure $(\theta_{n})_{0 \leq n \leq N}$ (with starting point $\theta_{0} \in \R^{d}$) defined by \eqref{RM} satisfies \A{HL}, \A{HUA} and \A{HLS}$_\alpha$ for $\alpha \in [\frac12,1]$, and that the step sequence $\gamma=(\gamma_{n})_{n\geq 0}$ satisfies \eqref{STEP}. Suppose that the law of the innovation satisfies \eqref{CONC_GAUSS}, $\beta>0 $. Denote by $\mu^{\gamma,0,\theta_0}_{N}$ the law of $\theta_{N}$.
 
Then, $\mu^{\gamma,0,\theta_0}_{N}$ satisfies $T_{\Phi^{*}_{\alpha}}$ with $\Phi^{*}_{\alpha,N}(\lambda)=\sup_{\rho \geq 0}\left\{\lambda \rho - \Phi_{\alpha,N}(\rho)\right\}$ and one has: 
\begin{itemize}

%
\item If $ \alpha\in (\frac12,1]$, for all $\rho\geq0$
$$
\Phi_{\alpha,N}(\rho) = \varphi_{\alpha}(\gamma,H, \theta_0) (C^{\gamma}_N\rho^2 \vee C^{\gamma, \alpha}_N\rho^{\frac{2\alpha}{2\alpha-1}})  
$$

\noindent with the two concentration rates  $C^{\gamma}_{N}:=\sum_{k=0}^{N-1}\gamma^2_{k+1}\frac{\Pi_{1,N}}{\Pi_{1,k}}$, with $\Pi_{1,N} := \prod_{k=0}^{N-1}(1-2\underline{\lambda}\gamma_{k+1} + C_{H,\mu} \gamma^2_{k+1})$ and $C^{\gamma, \alpha}_N := \sum_{k=0}^{N-1} \gamma^{\frac{2\alpha}{2\alpha-1}}_{k+1} (\frac{\Pi_{1,N}}{\Pi_{1,k}})^{\frac{2\alpha}{2\alpha-1}} ((k+1)\log^2(k+4))^{\frac{1-\alpha}{2\alpha-1}}$ for all $N\geq1$, where $C_{H,\mu}:=2 C_{H}^2(1+\E[|U|^2])$ and $\varphi_{\alpha}(\gamma, H, \theta_0)$ is an explicit constant defined in Proposition \ref{CONT_LAPLACERM}.

%
\item If $\alpha=\frac12$, for all $\rho \in [0, \lambda_{4.1}/ \tilde{s}_{N})$,
$$
\Phi_{1/2,N}(\rho) = 2 \varphi_{1/2}(\gamma, H, \theta_0) C^{\gamma}_N   \frac{(\rho / \lambda_{4.1})^2}{1-(\rho \tilde{s}_N / \lambda_{4.1})}
$$

\noindent with $ \tilde{s}_N := \max_{0 \leq k \leq N-1} (k+1)^{1/2} \log(k+4) \gamma_{k+1} \left(\frac{\Pi_{1,N}}{\Pi_{1,k}}\right)^{\frac12} \exp(\sum_{p=0}^{N-1} \frac{1}{(p+1) \log^2(p+4)})$
and the (positive) constants $\varphi_{1/2}(\gamma, H, \theta_0)$ and $\lambda_{4.1}$ are defined in Proposition \ref{CONT_LAPLACERM}.
\end{itemize}
 
\end{THM}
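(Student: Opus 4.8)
The plan is to establish the Laplace transform control of $f(\theta_N)$ for $1$-Lipschitz $f$ by iterating the one-step estimate \eqref{first_step_to_conc_RM} along the kernels $P_{k,N}$, and then to invoke the equivalence between such Laplace transform bounds and transport-entropy inequalities (Proposition following the definition of $T_\Phi$, i.e. Corollary 3.4 of \cite{goz:leon:2010}). More precisely, fix a $1$-Lipschitz function $f$. The key object is $g_k := P_{k,N}(\exp(\lambda f)) $, or rather the exponential rate function $\psi_k(\theta) := \log P_{k,N}(\exp(\lambda f))(\theta)$. Applying $P_k$ to $\exp(\psi_{k+1})$ and using that, by \A{HL} and the Lipschitz-modulus estimate in \A{HLS}$_\alpha$ together with \eqref{CONC_GAUSS}, the map $u\mapsto \psi_{k+1}(\theta-\gamma_{k+1}H(\theta,u))$ has Laplace transform controlled by $\beta/4$ times the square of its Lipschitz modulus, one gets a recursive inequality of the form
$$
\psi_k(\theta) \leq P_k(\psi_{k+1})(\theta) + \frac{\beta}{4}\,\big(\mathrm{Lip}_\theta(P_{k+1,N}f)\big)^2 C_\alpha^2 \gamma_{k+1}^2 L^{1-\alpha}(\theta),
$$
where one must separately control (i) the propagation of the Lipschitz modulus of $P_{k,N}f$ through the chain — this is where \A{HUA} enters and produces the contraction factors $\Pi_{1,N}/\Pi_{1,k}$ with $\Pi_{1,N}=\prod_{k=0}^{N-1}(1-2\underline{\lambda}\gamma_{k+1}+C_{H,\mu}\gamma_{k+1}^2)$, exactly as in the statement — and (ii) the exponential integrability of $L^{1-\alpha}(\theta_k)$ under the chain, which is Proposition \ref{CONT_LAPLACERM}. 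Combining (i) and (ii) and summing the recursion from $k=0$ to $N-1$ yields, after taking $\theta=\theta_0$, a bound of the shape $\E_{\theta_0}[\exp(\lambda(f(\theta_N)-\E_{\theta_0}[f(\theta_N)]))]\le \exp(\Phi^*_{\alpha,N}(\lambda))$ for the appropriate $\Phi^*_{\alpha,N}$.

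The case distinction $\alpha\in(\tfrac12,1]$ versus $\alpha=\tfrac12$ reflects which tool is available for step (ii). When $\alpha>\tfrac12$ the contribution $\gamma_{k+1}^2 L^{1-\alpha}(\theta_k)$ has a sublinear power of $L$, so a Young-type inequality splits each term into a quadratic piece (giving the $C^\gamma_N\rho^2$ term) and a piece with the higher exponent $\frac{2\alpha}{2\alpha-1}$ (giving $C^{\gamma,\alpha}_N\rho^{\frac{2\alpha}{2\alpha-1}}$); the logarithmic weights $((k+1)\log^2(k+4))^{\frac{1-\alpha}{2\alpha-1}}$ appearing in $C^{\gamma,\alpha}_N$ come precisely from optimizing the split at each step so that the series of the "$L$-integrability cost" converges — this is the role of the factor $\exp(\sum_p \frac1{(p+1)\log^2(p+4)})$, a convergent series. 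When $\alpha=\tfrac12$ the term is linear in $L$, so one cannot afford the Young split; instead one carries $\exp(c\gamma_{k+1}^2 L^{1/2}(\theta_k))$-type quantities directly and the exponential integrability is only available up to a threshold $\lambda_{4.1}$, producing the fractional-linear shape $\Phi_{1/2,N}(\rho)=2\varphi_{1/2}C^\gamma_N\frac{(\rho/\lambda_{4.1})^2}{1-\rho\tilde s_N/\lambda_{4.1}}$ valid only for $\rho<\lambda_{4.1}/\tilde s_N$. In both cases the final step is purely to identify the resulting Laplace bound with $\Phi^*_{\alpha,N}$ and invoke the Gozlan–L\'eon equivalence to conclude $\mu^{\gamma,0,\theta_0}_N$ satisfies $T_{\Phi^*_{\alpha,N}}$.

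The main obstacle, and the technical heart of the argument, is step (ii): obtaining uniform-in-$N$ exponential integrability estimates $\sup_{k\le N}\E_{\theta_0}[\exp(c\,\gamma_{k+1}^2 L^{1-\alpha}(\theta_k))]$ (or their analogues along the chain $P_{k,N}$) with constants that are summable against the contraction factors. This is where the flow/Lyapunov techniques are needed: one uses \A{HLS}$_\alpha$ ($\langle h,\nabla L\rangle\ge 0$, $|\nabla L|^2\le C_L L$, $\|\nabla^2 L\|\le 2\eta$) to show that $\E_{\theta_0}[e^{\lambda L(\theta_n)}]$ satisfies a Gronwall-type recursion — the second-order term $\eta$ controls the "diffusive" contribution of the innovation, the drift term is nonpositive by stability, and the cross term is handled by Cauchy–Schwarz plus \A{HL} — and then one must track how the admissible $\lambda$ shrinks with $n$ (which forces the $(k+1)$ and $\log(k+4)$ weights and, for $\alpha=\tfrac12$, the hard threshold $\lambda_{4.1}$). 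Verifying that the choice of step sequence in \eqref{STEP} makes all the relevant sums and products converge (so that $C^\gamma_N$, $C^{\gamma,\alpha}_N$, $\tilde s_N$ are finite) is delicate bookkeeping but follows the same pattern as in \cite{Frikha2012}; I would defer the detailed constants to Proposition \ref{CONT_LAPLACERM} and simply cite it here, presenting the present theorem's proof as the assembly of the one-step bound \eqref{first_step_to_conc_RM}, the Lipschitz-propagation estimate, Proposition \ref{CONT_LAPLACERM}, and the transport-entropy equivalence.
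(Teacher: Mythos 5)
Your overall architecture matches the paper's: the one-step bound \eqref{first_step_to_conc_RM} from \eqref{CONC_GAUSS} and \A{HLS}$_\alpha$, propagation of the Lipschitz modulus of the iterated kernels under \A{HUA} and \A{HL} (which indeed yields the factors $(\Pi_{1,N}/\Pi_{1,k})^{1/2}$, Proposition \ref{Lipschitz_control_SA}), exponential integrability of $L^{1-\alpha}(\theta_k)$ via the Lyapunov/Gronwall recursion, assembly into a Laplace-transform bound, and the Gozlan--L\'eon equivalence. However, the recursion you display is not justified as written, and fixing it is exactly the technical step the paper has to make. If $\psi_k:=\log P_{k,N}(\exp(\lambda f))$, then applying \eqref{CONC_GAUSS} at step $k$ produces an error proportional to $[\psi_{k+1}]_1^2$, the Lipschitz modulus of the \emph{log-Laplace} functional, which under the present assumptions is not controlled by $\lambda^2[P_{k+1,N}(f)]_1^2$: since $H(\cdot,u)$ is only Lipschitz with the random modulus $C_H(1+|u|)$, there is no pathwise contraction, only the $L^2$-type contraction of Proposition \ref{Lipschitz_control_SA}. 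Your sketch is also internally inconsistent on this point: if your recursion held literally, summing it would only require the first moments $\E_{\theta_0}[L^{1-\alpha}(\theta_k)]$, whereas your step (ii) (correctly) calls for exponential moments.

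The paper's chaining is different and is where the stated constants come from: one applies the one-step bound to the genuinely Lipschitz function $P_{k+1,N}(f)$, takes expectations, and decouples the two $\theta_k$-dependent exponentials $\exp(\lambda P_{k,N}(f)(\theta_k))$ and $\exp(c\lambda^2\gamma_{k+1}^2[P_{k+1,N}(f)]_1^2L^{1-\alpha}(\theta_k))$ by H\"older's inequality with conjugate exponents $p_k=1+((k+1)\log^2(k+4))^{-1}$, $q_k\le 2(k+1)\log^2(k+4)$; the convergence of $\prod_k p_k$ gives the factor $\exp(\sum_p((p+1)\log^2(p+4))^{-1})$, and the $q_k^{\frac{1-\alpha}{2\alpha-1}}$ contribution is what produces the weights $((k+1)\log^2(k+4))^{\frac{1-\alpha}{2\alpha-1}}$ in $C^{\gamma,\alpha}_N$ and the structure of $\tilde s_N$ --- they do not come from a per-step Young optimization (the Young split enters earlier, in passing from exponential moments of $L^{\alpha}$ to those of $L^{1-\alpha}$). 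Finally, the exponential-integrability input is a separate ingredient (Proposition \ref{CONT_LAPLACEL} and Corollary \ref{CONTLAPLACELBIS} in the paper), whereas Proposition \ref{CONT_LAPLACERM} is the assembled Laplace-transform control of $f(\theta_N)$ itself; citing the latter for your step (ii) would be circular, since proving it requires precisely the H\"older chaining described above.
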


As in the case of Euler like schemes, for $\alpha \in (\frac12,1]$, we have:
\begin{itemize} 

\item if $\lambda \in [0, 2\varphi (C^{\gamma}_{N} / (C^{\gamma, \alpha}_{N})^{2\alpha-1})^{\frac{1}{2(1-\alpha)}}]$, then $\Phi^{*}_{\alpha,N}(\lambda) =  \lambda^2/(4\varphi C^{\gamma}_{N})$;

\item If $\lambda \in [\frac{2\alpha}{2\alpha-1} \varphi  (C^{\gamma}_{N} / (C^{\gamma, \alpha}_{N})^{2\alpha-1})^{\frac{1}{2(1-\alpha)}}, +\infty)$, then $\Phi^{*}_{\alpha,N}(\lambda) = \frac{1}{2\alpha} \left(\frac{2\alpha-1}{2\alpha \varphi }\right)^{2\alpha-1} (\lambda^{2\alpha}/(C^{\gamma, \alpha }_{N})^{2\alpha-1})$;

\item If $\lambda \in (2\varphi (C^{\gamma}_{N} / (C^{\gamma, \alpha}_{N})^{2\alpha-1})^{\frac{1}{2(1-\alpha)}}, \frac{2\alpha}{2\alpha-1} \varphi  (C^{\gamma}_{N} / (C^{\gamma, \alpha}_{N})^{2\alpha-1})^{\frac{1}{2(1-\alpha)}})$, then $\Phi^{*}_{\alpha,N}(\lambda)= (\frac{C^{\gamma}_N}{C^{\gamma, \alpha}_{N}})^{\frac{2\alpha-1}{2(1-\alpha)}} \lambda - \varphi \frac{(C^{\gamma}_{N})^{\frac{\alpha}{1-\alpha}}}{(C^{\gamma,\alpha}_N)^{\frac{2\alpha-1}{1-\alpha}}}$.

\end{itemize}
%
%
%
%

For $\alpha=\frac12$, we obtain the following explicit bound for the Legendre transform of $\Phi_{1/2,N}$ 
$$
\forall  \lambda \geq0, \ \ \Phi^{*}_{1/2,N}(\lambda)= \frac{2 \varphi C^{\gamma}_N }{\tilde{s}^2_N} \left( \left(1+ \frac{\tilde{s}_N \lambda_{4.1} \lambda }{2\varphi C^{\gamma}_N }\right)^{\frac12} - 1\right)^2
$$
 
 Hence, for $N \geq1$ being fixed, the following simple asymptotic behaviors can be easily derived:
\begin{itemize}
\item When $\lambda$ is small, $\Phi^{*}_{1/2,N}(\lambda) \sim \lambda^2_{4.1} \lambda^2 /(2\varphi C^{\gamma}_{N});$
\vspace*{.2cm}
\item When $\lambda$ goes to infinity, $\Phi^{*}_{1/2}(\lambda) \sim \lambda_{4.1}\lambda / \tilde{s}_N$.
\end{itemize}

\begin{COROL}(Non-asymptotic deviation bounds)
\label{COROL_SA}
 Under the same assumptions as Theorem \ref{THM_TE_SA}, one has
$$
\P_{\theta_0}\left(\left|\theta_{N} - \theta^{*}\right| \geq r + \delta_{N}\right) \leq \exp\left(-\Phi^{*}_{\alpha,N}(r)\right)
$$

\noindent and $\delta_{N}:=\E_{\theta_0}\left[\left|\theta_{N}-\theta^{*}\right|\right]$. Moreover, the bias $\delta_{N}$ at step $N$ satisfies
$$
\delta_{N} \leq e^{-\underline{\lambda} \Gamma_{1,N} + C_{\alpha,\mu}\Gamma_{2,N}} \left|\theta_0 - \theta^{*}\right| + (2C_{\alpha,\mu})^{\frac{1}{2}} \left(\sum_{k=0}^{N-1} \gamma^2_{k+1} e^{-2\underline{\lambda} (\Gamma_{1,N}-\Gamma_{1,k+1}) + 2C_{\alpha,\mu} (\Gamma_{2,N}-\Gamma_{2,k+1})}\right)^{\frac{1}{2}},
$$

\noindent where $\Gamma_{1,N} :=\sum_{k=1}^{N} \gamma_{k}$, $\Gamma_{2,N} :=\sum_{k=1}^{N} \gamma^2_{k}$, $C_{\alpha,\mu}:=\underline{\lambda}^2 /2+2 C_{\alpha}K\E[|U|^2]$ with $K>0$.
\end{COROL}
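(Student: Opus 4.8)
The plan is to establish the two parts of Corollary~\ref{COROL_SA} separately: first the deviation bound for $|\theta_N-\theta^*|$ around its mean $\delta_N$, and then the quantitative bound on the bias $\delta_N$ itself.

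\textbf{Deviation bound.} The key structural fact, already emphasized in Section~\ref{RM_SEC}, is that the recursion \eqref{RM} is an inhomogeneous Feller Markov chain with kernels $P_k$, and that the one-step Laplace-transform estimate \eqref{first_step_to_conc_RM} holds. The map $\theta\mapsto|\theta-\theta^*|$ is $1$-Lipschitz, so by Theorem~\ref{THM_TE_SA} the law $\mu^{\gamma,0,\theta_0}_N$ of $\theta_N$ satisfies $T_{\Phi^*_{\alpha,N}}$, which by Proposition~1.2 is equivalent to the Laplace bound
$$
\forall\lambda\geq0,\qquad \E_{\theta_0}\!\left[\exp\!\big(\lambda(|\theta_N-\theta^*|-\delta_N)\big)\right]\leq \exp\!\big(\Phi_{\alpha,N}(\lambda)\big),
$$
writing $\delta_N=\E_{\theta_0}[|\theta_N-\theta^*|]$ (here I use $\Phi_{\alpha,N}$ for what the theorem calls $\Phi^*_{\alpha,N}$, the conjugate appearing on the exponential side). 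A Chernoff/Markov argument then gives, for every $r\geq0$,
$$
\P_{\theta_0}\big(|\theta_N-\theta^*|\geq r+\delta_N\big)\leq \inf_{\lambda\geq0}\exp\!\big(\Phi_{\alpha,N}(\lambda)-\lambda r\big)=\exp\!\big(-\Phi^{*}_{\alpha,N}(r)\big),
$$
by definition of the monotone conjugate, which is exactly the claimed deviation inequality. This part is essentially a bookkeeping reduction to Theorem~\ref{THM_TE_SA}.

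\textbf{Bias bound.} For $\delta_N$ I would run a Lyapunov-type recursion on $v_n:=\E_{\theta_0}[|\theta_n-\theta^*|^2]$. Expanding $|\theta_{n+1}-\theta^*|^2=|\theta_n-\theta^*|^2-2\gamma_{n+1}\langle H(\theta_n,U_{n+1}),\theta_n-\theta^*\rangle+\gamma_{n+1}^2|H(\theta_n,U_{n+1})|^2$ and taking conditional expectation, the cross term produces $-2\gamma_{n+1}\langle h(\theta_n),\theta_n-\theta^*\rangle$; using \A{HUA} (so $\langle h(\theta)-h(\theta^*),\theta-\theta^*\rangle\geq\underline\lambda|\theta-\theta^*|^2$ together with $h(\theta^*)=0$) bounds it by $-2\underline\lambda\gamma_{n+1}|\theta_n-\theta^*|^2$, while the quadratic term is controlled by the linear growth estimate $\E[|H(\theta,U)|^2]\leq C(1+|\theta-\theta^*|^2)$ noted after \A{HUA}, with the constant $C$ expressed through $C_\alpha$, $K$ and $\E[|U|^2]$ as in the definition of $C_{\alpha,\mu}$. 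One thus gets $v_{n+1}\leq(1-2\underline\lambda\gamma_{n+1}+2C_{\alpha,\mu}\gamma_{n+1}^2)v_n+2C_{\alpha,\mu}\gamma_{n+1}^2$ (absorbing the spare $\underline\lambda^2/2$ term to close the inequality cleanly). Iterating this affine recursion and using $1-x\leq e^{-x}$ to turn the products $\prod_{j}(1-2\underline\lambda\gamma_{j}+2C_{\alpha,\mu}\gamma_j^2)$ into $\exp(-2\underline\lambda(\Gamma_{1,N}-\Gamma_{1,k})+2C_{\alpha,\mu}(\Gamma_{2,N}-\Gamma_{2,k}))$ gives a bound on $v_N$; finally $\delta_N\leq v_N^{1/2}$ by Jensen, and $\sqrt{a+b}\leq\sqrt a+\sqrt b$ splits it into the two announced summands.

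\textbf{Main obstacle.} The deviation part is routine once Theorem~\ref{THM_TE_SA} is in hand. The delicate point is the bias recursion: one must check that \A{HL}, \A{HLS}$_\alpha$ and \eqref{CONC_GAUSS} really do yield the linear-growth control of $\E[|H(\theta,U)|^2]$ with a constant of the stated form, and that the exponents and the precise value $C_{\alpha,\mu}=\underline\lambda^2/2+2C_\alpha K\E[|U|^2]$ come out correctly after the Young-type splitting of the cross term and the passage $1-x\leq e^{-x}$. Tracking these constants faithfully, rather than any conceptual difficulty, is where the work lies.
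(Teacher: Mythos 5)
Your first part is fine: the deviation inequality is exactly the paper's intended reduction, namely the Laplace bound of Proposition \ref{CONT_LAPLACERM} (equivalently, Theorem \ref{THM_TE_SA} plus the duality of Proposition 1.2) applied to the $1$-Lipschitz map $\theta\mapsto|\theta-\theta^*|$, followed by Chernoff and the definition of the monotone conjugate.

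The bias part, however, has a genuine gap, and it sits precisely at the point you flagged as "the main obstacle". If you expand $|\theta_{n+1}-\theta^*|^2$ directly and take conditional expectations, the $\gamma_{n+1}^2$ term is $\gamma_{n+1}^2\,\E[|H(\theta_n,U)|^2\,|\,\mathcal F_n]$, which contains the uncentered drift contribution $|h(\theta_n)|^2$. Under the standing assumptions this can only be controlled through \A{HLS}$_\alpha$ by $C_h L(\theta_n)\le C_h K(1+|\theta_n-\theta^*|^2)$, so your recursion necessarily carries a $C_hK$-type contribution in the $\gamma^2$ coefficient; there is no way to "absorb a spare $\underline{\lambda}^2/2$" because in your expansion the cross term is used linearly ($-2\underline{\lambda}\gamma_{n+1}|\theta_n-\theta^*|^2$) and produces no $\gamma^2$ piece at all. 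Hence your route yields a bound of the announced form but with a different, $C_h$-dependent constant, not the stated $C_{\alpha,\mu}=\underline{\lambda}^2/2+2C_\alpha K\E[|U|^2]$ (note $\underline{\lambda}$ is a lower bound on $Dh$, so it cannot majorize $|h|^2$). The paper's proof (Proposition \ref{CTR_BIAS}) avoids this by writing $H(\theta_n,U_{n+1})=h(\theta_n)-\Delta M_{n+1}$ with the martingale increment $\Delta M_{n+1}$, linearizing $h(\theta_n)=J_n(\theta_n-\theta^*)$ with $J_n=\int_0^1 Dh(\theta^*+\lambda(\theta_n-\theta^*))\,d\lambda$ (using $h(\theta^*)=0$ and \A{HUA}), so that $\theta_{n+1}-\theta^*=(I-\gamma_{n+1}J_n)(\theta_n-\theta^*)+\gamma_{n+1}\Delta M_{n+1}$: the cross term vanishes in expectation, the drift is handled exactly through the contraction $(1-\underline{\lambda}\gamma_{n+1})^2=1-2\underline{\lambda}\gamma_{n+1}+\underline{\lambda}^2\gamma_{n+1}^2$ (this is where $\underline{\lambda}^2/2$ comes from), and only the centered noise enters the $\gamma^2$ term, bounded via \A{HLS}$_\alpha$ and the quadratic growth of $L$ by a constant of the form $4KC_\alpha^2\E[|U|^2](1+\E[|\theta_n-\theta^*|^2])$. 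With that decomposition in place, your finishing steps (induction, $1+x\le e^x$, Jensen, $\sqrt{a+b}\le\sqrt a+\sqrt b$) are exactly the paper's.
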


Now, we investigate the impact of the step sequence $(\gamma_{n})_{n\geq1}$ on the concentration rate sequences $C^{\gamma}_N$, $C^{\gamma,\alpha}_N$, $\tilde{s}_N$ and the bias $\delta_N$. Let us note that a similar analysis has been performed in  \cite{Frikha2012}. We obtain the following results:
\begin{itemize} 
\item If we choose $\gamma_{n}=\frac{c}{n}$, with $c>0$. Then $\delta_{N} \rightarrow 0$, $N\rightarrow+\infty$, $\Gamma_{1,N}=c \log(N) + c'_{1} + r_{N}$, $c'_{1}>0$ and $r_{N}\rightarrow 0$, so that $\Pi_{1,N} =\O( N^{-2c \underline{\lambda}})$.
\begin{itemize}

\item If $c < \frac{1}{2 \underline{\lambda}}$, the series $\sum_{k=1}^{N}\gamma^{2}_{k}/\Pi_{1,k}$, $\sum_{k=0}^{N-1} \gamma^{\frac{2\alpha}{2\alpha-1}}_{k+1} (1/\Pi_{1,k}^{\frac{2\alpha}{2\alpha-1}}) ((k+1)\log^2(k+4))^{\frac{1-\alpha}{2\alpha-1}}$ converge so that we obtain $C^{\gamma}_N= \mathcal{O}(N^{-2c \underline{\lambda}})$, $C^{\gamma, \alpha}_N= \mathcal{O}(N^{- \frac{2\alpha}{2\alpha -1}c \underline{\lambda}})$, $\tilde{s}_N=\O(N^{-c \underline{\lambda}})$. 
 
 \vspace*{.2cm}
 
\item If $c > \frac{1}{2 \underline{\lambda}}$, a comparison between the series and the integral yields $C^{\gamma}_N=\O(N^{-1})$, $C^{\gamma, \alpha}_N = \mathcal{O}((\log(N))^{2 \frac{1-\alpha}{2\alpha-1}} N^{- \frac{\alpha}{2\alpha -1}})$, $\tilde{s}_N=\mathcal{O}(\log(N)N^{-\frac12})$.
\end{itemize}

\medskip

\noindent Let us notice that we find the same critical level for the constant $c$ as in the Central Limit Theorem for stochastic algorithms. Indeed, if $c> \frac{1}{2\mathcal{R}e(\lambda_{min})}$ where $\lambda_{min}$ denotes the eigenvalue of $Dh(\theta^{*})$ with the smallest real part then we know that a Central Limit Theorem holds for $(\theta_{n})_{n\geq1}$ (see e.g. \cite{Duflo1996}, p.169). Such behavior was already observed in \cite{Frikha2012}. 

The associated bound for the bias is the following:
$$
\delta_N\le K\left(\frac {|\theta_0-\theta^*|} {N^{\underline{\lambda} c}}+  \frac{(2C_{\alpha,\mu})^{\frac{1}{2}} }{N^{\underline{\lambda}c\wedge \frac12}}  \right).
$$

\item If we choose $\gamma_{n}=\frac{c}{n^{\rho}}$, $c>0$, $\frac{1}{2} < \rho < 1$, then $\delta_{N} \rightarrow 0$, $\Gamma_{1,N} \sim \frac{c}{1-\rho} N^{1-\rho}$ as $N \rightarrow + \infty$ and elementary computations show that there exists $C>0$ s.t. for all $N\ge 1$, $\Pi_{1,N}\le C\exp(-2\underline{\lambda}\frac{c}{1-\rho}N^{1-\rho}) $. Hence, for all $\epsilon\in (0,1-\rho) $ we have:
\begin{eqnarray*}
C^{\gamma}_N=\Pi_{1,N}\sum_{k=1}^{N} \gamma_k^2 \Pi^{-1}_{1,k} &\le& c^2 \left\{\Pi_{1,N}\Pi_{1,N-N^{\rho+\epsilon}}^{-1}\sum_{k=1}^{N-N^{\rho+\epsilon}} \frac1{k^{2\rho}}+\sum_{k=N-N^{\rho+\epsilon}+1}^{N} \frac1{k^{2\rho}}\right\}\\&\le &  c^2 \left\{C\exp(-2\underline{\lambda}\frac{c}{1-\rho}(N^{1-\rho}-(N-N^{\rho+\epsilon} )^{1-\rho})) +  \frac{N^{\rho+\epsilon}}{(N-N^{\rho+\epsilon}+1)^{2\rho}}\right\}\\
&\le & c^2 \left\{C\exp(-2\underline{\lambda}c N^{\epsilon})+  \frac1{N^{\rho-\epsilon}} \right\}.
\end{eqnarray*}

\noindent Up to a modification of $\epsilon$, this yields $C^{\gamma}_N=\Pi_{1,N}\sum_{k=1}^{N} \gamma_k^2 \Pi^{-1}_{1,k}=o(N^{-\rho+\epsilon}),\ \epsilon\in (0,1-\rho)$. Similar computations show that $C^{\gamma, \alpha}_N = o(N^{-\frac{(\rho-(1-\alpha))}{2\alpha-1}-\epsilon})$ and we clearly get $\tilde{s}_N = \mathcal{O}\left( \log(N) N^{-(\rho-\frac12)}\right)$.

\end{itemize}

Concerning the bias, from Corollary \ref{COROL_SA}, we directly obtain the following bound:
$$
\delta_N\le K\left( \exp\left( -\frac{\underline{\lambda}c }{1-\rho}  N^{1-\rho} \right)|\theta_0-\theta^*|+ \frac{(2C_{\alpha,\mu})^{\frac{1}{2}}}{N^{\frac{\rho}{2}-\epsilon}}\right),\ \forall \epsilon>0. 
$$

The impact of the initial difference $|\theta_0-\theta^*| $ is exponentially smaller compared to the case $\gamma_{n}=\frac{c}{n}$. This is natural since the step sequence is decreasing slower to $0$.

\begin{THM}[Transport-Entropy inequalities for stochastic approximation with averaging of trajectories]
\label{THM_TE_SA_AV}
Let $N\in \N^{*}$. Assume that the function $H$ of the recursive procedure $\theta=(\theta_{n})_{0 \leq n \leq N}$ (with starting point $\theta_{0} \in \R^{d}$) defined by \eqref{RM} satisfies \A{HL}, \A{HUA} and \A{HLS}$_\alpha$ for $\alpha \in [\frac12,1]$, and that the step sequence $\gamma = (\gamma_{n})_{n\geq1}$ satisfies \eqref{STEP}. Suppose that the law of the innovation satisfies \eqref{CONC_GAUSS}, $\beta>0 $. Denote by $\bar{\mu}^{\gamma,0,\theta_0}_{N}$ the law of $\bar{\theta}_{N}$ where $\bar{\theta}$ is the empirical mean of $\theta$ defined by \eqref{RM_AV}.  
Then, $\bar{\mu}^{\gamma,0,\theta_0}_{N}$ satisfies $T_{\bar{\Phi}^{*}_{\alpha,N}}$ with $\bar{\Phi}^{*}_{\alpha,N}(\lambda)=\sup_{\rho \geq 0}\left\{\lambda \rho - \bar{\Phi}_{\alpha,N}(\rho)\right\}$ and one has: 
\begin{itemize}

\item If $ \alpha\in (\frac12,1]$, for all $\rho\geq0$
$$
\bar{\Phi}_{\alpha,N}(\rho) = \varphi_{\alpha}(\gamma,H, \theta_0)  (\bar{C}^{\gamma}_N  \rho^2 \vee \bar{C}^{\gamma,\alpha}_N \rho^{\frac{2\alpha}{2\alpha-1}})
$$

\noindent where $\varphi_{\alpha}(\gamma, H, \theta_0)$ is a positive constant defined in Section \ref{SEC:PROOF:SAAV}.

\item If $\alpha=\frac12$, for all $\rho \in [0, \lambda_{4.1}/ \hat{s}_{N})$,
$$
\bar{\Phi}_{1/2,N}(\rho) = 2 \varphi_{1/2}(\gamma, H, \theta_0)  \bar{C}^{\gamma}_{N}   \frac{(\rho / \lambda_{4.1})^2}{1-(\rho \hat{s}_N / \lambda_{4.1})}
$$

\noindent where $\varphi_{1/2}(\gamma, H, \theta_0)$ and $\lambda_{4.1}$ are positive constants defined in Proposition \ref{CONT_LAPLACERM}.

\end{itemize}
 
 \noindent where the three concentration rate sequences are defined for $N\in \N^{*}$ by 
$$
 \bar{C}^{\gamma}_{N}  := \sum_{k=1}^{N-1} \bar{\gamma}^2_{k,N}, \ \ \ \ 
 \bar{C}^{\gamma, \alpha}_N  := \sum_{k=1}^{N-1}\bar{\gamma}^{\frac{2\alpha}{2\alpha-1}}_{k,N} ((k+1) \log^2(k+4))^{\frac{1-\alpha}{2\alpha-1}}, \ \ \hat{s}_N := \max_{1 \leq k \leq N-1} (k+1)^{\frac12} \log(k+4)\bar{\gamma}_{k,N} e^{\sum_{p=0}^{N-1} \frac{1}{(p+1) \log^2(p+4)}} 
 $$
 
 \noindent with $\bar{\gamma}_{k,N}:= \frac{\gamma_k}{N}(1+\sum_{j=k+1}^{N-1} (\frac{\Pi_{1,j}}{\Pi_{1,k}})^{\frac12})$, and $\Pi_{1,N} := \prod_{p=0}^{N-1}(1-2\underline{\lambda}\gamma_{p+1} + C_{H,\mu} \gamma^2_{p+1})$.
   
\end{THM}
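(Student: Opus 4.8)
The plan is to follow the scheme of proof of Theorem~\ref{THM_TE_SA}, the single new ingredient being that each innovation must be weighted by the \emph{effective step} produced by the averaging \eqref{RM_AV}. First I reduce the statement to a Laplace--transform estimate. The map $\rho\mapsto\bar\Phi_{\alpha,N}(\rho)$ is convex, non-decreasing and vanishes at the origin: for $\alpha\in(1/2,1]$ it is the supremum of the two convex power functions $\rho\mapsto\bar C^{\gamma}_N\rho^2$ and $\rho\mapsto\bar C^{\gamma,\alpha}_N\rho^{\frac{2\alpha}{2\alpha-1}}$ (recall $\tfrac{2\alpha}{2\alpha-1}\ge2$), while for $\alpha=1/2$ it is the convex increasing function $\rho\mapsto c_1\rho^2/(1-c_2\rho)$ on $[0,c_2^{-1})$ and $+\infty$ beyond, with $c_1=2\varphi_{1/2}\bar C^{\gamma}_N/\lambda_{4.1}^2$ and $c_2=\hat s_N/\lambda_{4.1}$. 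Hence the Fenchel--Moreau biconjugation gives $(\bar\Phi^{*}_{\alpha,N})^{*}=\bar\Phi_{\alpha,N}$, and by the proposition borrowed from Corollary~3.4 of \cite{goz:leon:2010} it suffices to prove that for every $1$-Lipschitz $f:\R^d\to\R$ and every $\lambda\ge0$
\[
\E_{\theta_0}\!\bigl[\exp(\lambda f(\bar\theta_N))\bigr]\le\exp\!\Bigl(\lambda\,\E_{\theta_0}[f(\bar\theta_N)]+\bar\Phi_{\alpha,N}(\lambda)\Bigr),
\]
the bound being vacuous whenever $\bar\Phi_{\alpha,N}(\lambda)=+\infty$ (which occurs for large $\lambda$ when $\alpha=1/2$).

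\textbf{Step 1: the effective step.} Set $\tilde f(z):=f(z_1)$, a $1$-Lipschitz function on $\R^d\times\R^d$, so that $f(\bar\theta_N)=\tilde f(z_{N-1})$ and $\E_{\theta_0}[\exp(\lambda f(\bar\theta_N))]=K_{0,N-1}(\exp(\lambda\tilde f))(z_0)$ with $z_0=(\theta_0,\theta_0)$. Decomposing $\bar\theta_N=\frac{k+2}{N}\bar\theta_{k+2}+\frac1N\sum_{j=k+2}^{N-1}\theta_j$ and coupling two copies of \eqref{RM} driven by the same innovations, assumptions \A{HUA} and \A{HL} give the $L^2$-contraction $\E[|\theta_j-\theta_j'|^2]\le(\Pi_{1,j}/\Pi_{1,k+1})\,\E[|\theta_{k+1}-\theta_{k+1}'|^2]$ for $j\ge k+1$; combined with Jensen's inequality this shows that $z=(z_1,z_2)\mapsto K_{k+1,N-1}(\tilde f)(z)$ is Lipschitz in $z_1$ with modulus $\frac{k+2}{N}$ and in $z_2$ with modulus $\frac1N\sum_{j=k+2}^{N-1}(\Pi_{1,j}/\Pi_{1,k+1})^{1/2}$. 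Feeding this into the definition of the kernel $K_k$ and using the Lipschitz bound for $u\mapsto H(z_2,u)$ in \A{HLS}$_\alpha$, the composed map $u\mapsto K_{k+1,N-1}(\tilde f)\bigl(\tfrac{k+1}{k+2}z_1+\tfrac{1}{k+2}(z_2-\gamma_{k+1}H(z_2,u)),\,z_2-\gamma_{k+1}H(z_2,u)\bigr)$ is Lipschitz with modulus $C_\alpha\,\bar\gamma_{k+1,N}\,L^{\frac{1-\alpha}{2}}(z_2)$, so \eqref{CONC_GAUSS} applied conditionally upgrades \eqref{first_step_to_conc_RM_AV} to the sharp one-step bound
\[
K_k\bigl(\exp(\lambda K_{k+1,N-1}(\tilde f))\bigr)(z)\le\exp\!\Bigl(\lambda K_{k,N-1}(\tilde f)(z)+\tfrac{\beta\lambda^2}{4}C_\alpha^2\,\bar\gamma_{k+1,N}^2\,L^{1-\alpha}(z_2)\Bigr),\qquad z_2=\theta_k.
\]
Iterating the kernels $K_{N-2},\dots,K_0$ along the chain started at $z_0=(\theta_0,\theta_0)$, one is now in exactly the situation of Theorem~\ref{THM_TE_SA}, with the step $\gamma_{k+1}$ replaced throughout by $\bar\gamma_{k,N}$; this is why $\bar C^{\gamma}_N,\bar C^{\gamma,\alpha}_N,\hat s_N$ are the analogues of $C^{\gamma}_N,C^{\gamma,\alpha}_N,\tilde s_N$.

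\textbf{Step 2: iteration and quantification of the diffusion term.} When $\alpha=1$ one has $L^{1-\alpha}\equiv1$, the iteration is a plain Gaussian tensorisation with rate $\bar C^{\gamma}_N=\sum_k\bar\gamma_{k,N}^2$, and the linear part of the exponent telescopes to $\lambda\,\E_{\theta_0}[f(\bar\theta_N)]$. For $\alpha\in[1/2,1)$ the remainder $\tfrac{\beta\lambda^2}{4}C_\alpha^2\bar\gamma_{k+1,N}^2L^{1-\alpha}(\theta_k)$ is unbounded, so after each peeling I split the integrand by Hölder's inequality into an $\exp(\lambda\times(\text{Lipschitz}))$-factor — handled by the displayed one-step bound — and an $\exp(\text{remainder})$-factor, which I control using the integrability estimates for \eqref{RM} of Proposition~\ref{CONT_LAPLACERM}: exponential moments of $L^{1-\alpha}(\theta_k)$ when $\alpha\in(1/2,1)$, and polynomial moments of $L(\theta_k)$ of order $\mathcal{O}((k+1)\log^2(k+4))$ when $\alpha=1/2$. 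Optimising the Hölder exponents — equivalently, using a Young inequality $\lambda^2L^{1-\alpha}\le\varepsilon L+c_\alpha\,\varepsilon^{-\frac{1-\alpha}{\alpha}}\lambda^{1/\alpha}$ with a $k$-dependent $\varepsilon$ and absorbing the accumulated $L(\theta_0)$ term (a constant) into $\varphi_\alpha(\gamma,H,\theta_0)$ — and summing the $N-1$ contributions reproduces the rate $\bar C^{\gamma,\alpha}_N$ with its correction $((k+1)\log^2(k+4))^{\frac{1-\alpha}{2\alpha-1}}$ (resp.\ the rate $\hat s_N$ with its factor $\exp(\sum_p\frac{1}{(p+1)\log^2(p+4)})$), hence the function $\bar\Phi_{\alpha,N}$. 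I expect this last step to be the main obstacle: quantifying how the $N-1$ unbounded diffusion terms $L^{1-\alpha}(\theta_k)$ compound across the peelings without spoiling the leading $\rho^2$ behaviour relies entirely on Proposition~\ref{CONT_LAPLACERM} and on a careful choice of Hölder exponents, and it is precisely what forces the logarithmic corrections appearing in $\bar C^{\gamma,\alpha}_N$ and $\hat s_N$. Once the displayed Laplace estimate is established with $\bar\Phi_{\alpha,N}$ on the right-hand side, the conclusion follows from the cited equivalence.
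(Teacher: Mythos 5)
Your proposal follows essentially the same route as the paper's proof: the Markov chain $(z_n)=(\bar{\theta}_{n+1},\theta_n)$, the Lipschitz control of the iterated kernels $K_{k,N-1}$ via the $L^2$-contraction of the underlying scheme (which is exactly what produces the effective steps $\bar{\gamma}_{k,N}$ and hence $\bar{C}^{\gamma}_N$, $\bar{C}^{\gamma,\alpha}_N$, $\hat{s}_N$), the one-step Gaussian concentration bound in $L^{1-\alpha}(\theta_k)$, the H\"older peeling with conjugate exponents $q_k\asymp (k+1)\log^2(k+4)$ absorbed by uniform exponential-moment bounds, and finally the dual characterization of $T_{\bar{\Phi}^{*}_{\alpha,N}}$. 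The only slip is bibliographic: the integrability estimates you need in Step 2 are those of Proposition \ref{CONT_LAPLACEL} and Corollary \ref{CONTLAPLACELBIS} (and Lemma \ref{CONTLEMMA} for $\alpha=\tfrac12$), not Proposition \ref{CONT_LAPLACERM}, which is the Laplace-transform bound for the non-averaged iterate itself.
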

As regards the explicit computation of the Legendre transform of $\bar{\Phi}_{\alpha,N}$, similarly to the previous theorem, we have:

\begin{itemize}
\item  for $\alpha \in (\frac12,1]$:
\begin{itemize} 

\item if $\lambda \in [0, 2\varphi (\bar{C}^{\gamma}_{N} / (\bar{C}^{\gamma, \alpha}_{N})^{2\alpha-1})^{\frac{1}{2(1-\alpha)}}]$, then $\bar{\Phi}^{*}_{\alpha,N}(\lambda) = (\lambda^2/4\varphi \bar{C}^{\gamma}_{N}) $;

\item If $\lambda \in [\frac{2\alpha}{2\alpha-1} \varphi  (\bar{C}^{\gamma}_{N} / (\bar{C}^{\gamma, \alpha}_{N})^{2\alpha-1})^{\frac{1}{2(1-\alpha)}}, +\infty)$, then $\bar{\Phi}^{*}_{\alpha,N}(\lambda) = \frac{1}{2\alpha} \left(\frac{2\alpha-1}{2\alpha \varphi }\right)^{2\alpha-1} (\lambda^{2\alpha}/ (\bar{C}^{\gamma, \alpha }_{N})^{2\alpha-1})$;

\item If $\lambda \in (2\varphi (\bar{C}^{\gamma}_{N} / (\bar{C}^{\gamma, \alpha}_{N})^{2\alpha-1})^{\frac{1}{2(1-\alpha)}}, \frac{2\alpha}{2\alpha-1} \varphi  (\bar{C}^{\gamma}_{N} / (\bar{C}^{\gamma, \alpha}_{N})^{2\alpha-1})^{\frac{1}{2(1-\alpha)}})$, then $\bar{\Phi}^{*}_{\alpha,N}(\lambda)= (\frac{\bar{C}^{\gamma}_N}{\bar{C}^{\gamma, \alpha}_{N}})^{\frac{2\alpha-1}{2(1-\alpha)}} \lambda - \varphi \frac{(\bar{C}^{\gamma}_{N})^{\frac{\alpha}{1-\alpha}}}{(\bar{C}^{\gamma,\alpha}_N)^{\frac{2\alpha-1}{1-\alpha}}}$.
\end{itemize}

\item for $\alpha=\frac12$,
$$
\forall  \lambda \geq0, \ \ \bar{\Phi}^{*}_{1/2,N}(\lambda)= \frac{2 \varphi \bar{C}^{\gamma}_N }{\hat{s}^2_N} \left( \left(1+ \frac{\hat{s}_N \lambda_{4.1} \lambda }{2\varphi \bar{C}^{\gamma}_N }\right)^{\frac12} - 1\right)^2
$$
 
 Hence, for $N \geq1$ being fixed, the following simple asymptotic behaviors can be easily derived:
\begin{itemize}
\item When $\lambda$ is small, $\bar{\Phi}^{*}_{1/2,N}(\lambda) \sim \lambda^2_{4.1} \lambda^2 /(2\varphi \bar{C}^{\gamma}_{N});$
\vspace*{.2cm}
\item When $\lambda$ goes to infinity, $\bar{\Phi}^{*}_{1/2}(\lambda) \sim \lambda_{4.1}\lambda / \hat{s}_N$.
\end{itemize}

\end{itemize}
 
\begin{COROL}(Non-asymptotic deviation bounds)
\label{COROL_SA_AV}
Under the same assumptions as Theorem \ref{THM_TE_SA_AV}, for all $N\geq 1$ for all $r\geq0$, one has
$$
\P_{\theta_0}\left(\left|\bar{\theta}_{N} - \theta^{*}\right| \geq r + \bar{\delta}_{N}\right) \leq \exp\left(-\Phi^{*}_{\alpha,N}(r)\right)
$$

\noindent and $\bar{\delta}_{N}:=\E_{\theta_0}\left[\left|\bar{\theta}_{N}-\theta^{*}\right|\right]$. 
\end{COROL}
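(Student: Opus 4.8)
The plan is to deduce the stated deviation bound directly from the transport-entropy inequality of Theorem \ref{THM_TE_SA_AV}, combined with the dual characterization of $T_\Phi$ recalled in the Introduction (Corollary 3.4 of \cite{goz:leon:2010}). Fix $N\geq 1$ and write $\bar{\mu}:=\bar{\mu}^{\gamma,0,\theta_0}_{N}$ for the law of $\bar{\theta}_N$.

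First I would observe that $g:z\mapsto |z-\theta^{*}|$ is $1$-Lipschitz on $\R^d$. By Theorem \ref{THM_TE_SA_AV}, $\bar{\mu}$ satisfies $T_{\bar{\Phi}^{*}_{\alpha,N}}$, where $\bar{\Phi}^{*}_{\alpha,N}$ is the monotone conjugate of $\bar{\Phi}_{\alpha,N}$. Since $\bar{\Phi}_{\alpha,N}$ is convex, nondecreasing, vanishes at $0$ and is lower semicontinuous (extended by $+\infty$ outside its effective domain in the case $\alpha=\tfrac12$), one has $(\bar{\Phi}^{*}_{\alpha,N})^{*}=\bar{\Phi}_{\alpha,N}$. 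Applying the equivalence of Corollary 3.4 of \cite{goz:leon:2010} to the probability measure $\bar{\mu}$ and to the $1$-Lipschitz function $g$ then yields the Laplace-transform estimate
$$
\E_{\theta_0}\!\left[\exp\!\left(\lambda\,|\bar{\theta}_N-\theta^{*}|\right)\right]\leq \exp\!\left(\lambda\,\E_{\theta_0}[|\bar{\theta}_N-\theta^{*}|]+\bar{\Phi}_{\alpha,N}(\lambda)\right)=\exp\!\left(\lambda\bar{\delta}_N+\bar{\Phi}_{\alpha,N}(\lambda)\right),
$$
valid for all $\lambda\geq 0$ when $\alpha\in(\tfrac12,1]$ and for $\lambda$ in the effective domain $[0,\lambda_{4.1}/\hat{s}_N)$ when $\alpha=\tfrac12$ (the inequality being trivial outside it), where we used $\bar{\delta}_N=\E_{\theta_0}[|\bar{\theta}_N-\theta^{*}|]$.

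Then, for $r\geq 0$, a one-sided Chernoff argument gives, for every admissible $\lambda\geq 0$,
$$
\P_{\theta_0}\!\left(|\bar{\theta}_N-\theta^{*}|\geq r+\bar{\delta}_N\right)\leq e^{-\lambda(r+\bar{\delta}_N)}\,\E_{\theta_0}\!\left[e^{\lambda|\bar{\theta}_N-\theta^{*}|}\right]\leq e^{-\lambda r+\bar{\Phi}_{\alpha,N}(\lambda)}.
$$
Taking the infimum over $\lambda\geq 0$ and recalling $\bar{\Phi}^{*}_{\alpha,N}(r)=\sup_{\lambda\geq 0}\{\lambda r-\bar{\Phi}_{\alpha,N}(\lambda)\}$ produces $\P_{\theta_0}(|\bar{\theta}_N-\theta^{*}|\geq r+\bar{\delta}_N)\leq \exp(-\bar{\Phi}^{*}_{\alpha,N}(r))$, which is the claimed bound (the exponent written $\Phi^{*}_{\alpha,N}$ in the statement is to be read as $\bar{\Phi}^{*}_{\alpha,N}$); the piecewise-explicit form of $\bar{\Phi}^{*}_{\alpha,N}$ is the one displayed just before the corollary.

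There is no genuine obstacle here, the whole content being carried by Theorem \ref{THM_TE_SA_AV}; the only point needing a little care is the biconjugacy identity $(\bar{\Phi}^{*}_{\alpha,N})^{*}=\bar{\Phi}_{\alpha,N}$. This reduces to checking convexity and lower semicontinuity of $\bar{\Phi}_{\alpha,N}$: for $\alpha\in(\tfrac12,1]$ this is the elementary fact that $\rho\mapsto\rho^{2}\vee\rho^{\frac{2\alpha}{2\alpha-1}}$ is a maximum of two convex power functions with exponents $\geq 1$, and for $\alpha=\tfrac12$ it is the convexity of $\rho\mapsto(\rho/\lambda_{4.1})^{2}/(1-\rho\hat{s}_N/\lambda_{4.1})$ on $[0,\lambda_{4.1}/\hat{s}_N)$, together with the bookkeeping of the admissible range of $\lambda$ in the Chernoff step in that last case.
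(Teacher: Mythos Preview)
Your argument is correct and is precisely the standard deduction the paper leaves implicit: the paper does not give a separate proof of this corollary, treating it as an immediate consequence of the transport-entropy inequality of Theorem \ref{THM_TE_SA_AV} via the dual characterization and the Chernoff bound (cf.\ the unnumbered Corollary following Proposition~1.1 in the Introduction). Your remark that the exponent $\Phi^{*}_{\alpha,N}$ in the statement should be read as $\bar{\Phi}^{*}_{\alpha,N}$ is also right; note, incidentally, that the Laplace bound is in fact derived directly in Section~\ref{SEC:PROOF:SAAV}, so one could skip the biconjugacy step and go straight from that bound to Chernoff, but your route through $T_{\bar{\Phi}^{*}_{\alpha,N}}$ is logically equivalent and matches how the corollary is stated.
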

Now, we analyze the impact of the step sequence on the concentration rate sequences $\bar{C}^{\gamma}_{N}$, $\bar{C}^{\gamma, \alpha}_{N}$, $\hat{s}_N$ and the bias $\bar{\delta}_N$. We first simplify the expression of the concentration rate.
Let us note that since the step sequence $(\gamma_{n})_{n\geq1}$ satisfies \eqref{STEP}, there exists a positive constant $K>0$ such that $(\Pi_{1,j} \Pi^{-1}_{1,k})^{\frac12} \leq K \exp(-\underline{\lambda} (\Gamma_{1,j}-\Gamma_{1,k+1}))$, $k < j$. Moreover, since the function $x\mapsto \exp(-\underline{\lambda} x)$ is decreasing on $[\Gamma_{1,p}, \Gamma_{1,p+1}]$, one clearly gets for all $i,j \in \left\{0, \cdots, N-1\right\}$, $i< j$
$$
M_j-M_i:=\sum_{p=i}^{j-1} \exp(-\underline{\lambda} \Gamma_{1,p+1}) \gamma_{p+1} =\sum_{p=i}^{j-1} \int_{\Gamma_{1,p}}^{\Gamma_{1,p+1}}\exp(-\underline{\lambda} \Gamma_{1,p+1}) dx \leq \frac{1}{\underline{\lambda}} (\exp(-\underline{\lambda} \Gamma_{1,i})-\exp(-\underline{\lambda} \Gamma_{1,j}))
$$
 
\noindent so that, using the latter bound and an Abel transform, we obtain
\begin{align*}
\sum_{j=k+1}^{N-1} \exp(-\underline{\lambda} \Gamma_{1,j+1}) &=  \sum_{j=k+1}^{N-1} (M_{j+1}-M_j) \gamma^{-1}_{j+1} \leq -\frac{1}{\underline{\lambda}} \left(\sum_{j=k+1}^{N-1}  (\exp(-\underline{\lambda} \Gamma_{1,j+1})-\exp(-\underline{\lambda} \Gamma_{1,j}))\gamma^{-1}_{j+1}\right)  \\
& \leq -\frac{1}{\underline{\lambda}} \left(e^{-\underline{\lambda} \Gamma_{1,N}} \gamma^{-1}_{N+1} - e^{-\underline{\lambda} \Gamma_{1,k+1}}  \gamma^{-1}_{k+2} - \sum_{p=k+1}^{N-1} e^{-\underline{\lambda} \Gamma_{1,p+1}}(\gamma^{-1}_{p+2} - \gamma^{-1}_{p+1}) \right) 
\end{align*}

\noindent which finally leads to the following bound
\begin{equation}
\label{bound_step}
\bar{\gamma}_{k,N} \leq \frac{K}{\underline{\lambda}}\left(\frac{\gamma_k \gamma^{-1}_{k+2}}{N} + \frac{\gamma_{k}}{N} \sum_{p=k+1}^{N-1} e^{-\underline{\lambda} (\Gamma_{1,p} -\Gamma_{1,k+1})}(\gamma^{-1}_{p+2} - \gamma^{-1}_{p+1})\right).
\end{equation}

Now, we are in position to study the impact of the step sequence $(\gamma_n)_{n\geq1}$ on the concentration rate sequences:
\begin{itemize}

\item If we select $\gamma_{n} = \frac{c}{n}$ with $c>0$, then, using that $\Gamma_{1,N}=c \log(N) + c'_{1} + r_{N}$, $c'_{1}>0$ with $r_{N}\rightarrow 0$, one easily derives from \eqref{bound_step} that there exists $C>0$ such that
$$
\bar{\gamma}_{k,N} \leq C \left( \frac{1}{N} + \frac{1}{k^{1-c\underline{\lambda}}} \frac1N \sum_{p=k}^{N-1} \frac{1}{p^{\underline{\lambda}c}}\right),
$$

\noindent and a comparison between the series and the integral yields the following bounds:
\begin{itemize}

\item If $\underline{\lambda}c < \frac12$, one has: $\bar{C}^{\gamma}_N = \mathcal{O}(N^{-2c\underline{\lambda}})$, $\bar{C}^{\gamma, \alpha}_N = \O(N^{-\frac{2\alpha}{2\alpha-1} c\underline{\lambda} })$ and $\hat{s}_{N}=\O(N^{-c \underline{\lambda}})$. 

\item If $\underline{\lambda}c >\frac12$, one has: $\bar{C}^{\gamma}_N = \mathcal{O}(N^{-1})$,  $\bar{C}^{\gamma, \alpha}_N = \mathcal{O}((\log(N))^{2 \frac{1-\alpha}{2\alpha-1}} N^{- \frac{\alpha}{2\alpha -1}})$ and $\hat{s}_{N}=\O(N^{-\frac12})$. 

\end{itemize}

Hence, we clearly see that for the case $\gamma_{n} = \frac{c}{n}$, averaging the trajectories of a stochastic approximation algorithm is not the key to circumvent the lake of robustness concerning the choice of the constant $c$.

The bound for the bias is obtained by averaging the bound previously obtained for $\delta_N$. We easily get:
$$
\bar{\delta}_{N} \leq \frac{1}{N} \sum_{k=0}^{N-1} \E_{\theta_0}[|\theta_k-\theta^{*}|] \leq  K \left( \frac{|\theta_0-\theta^{*}|}{N^{\underline{\lambda}c}} + \frac{(2C_{\alpha, \mu})^{\frac12}}{N^{\underline{\lambda} c \wedge \frac12}}\right)
$$

\item If we choose $\gamma_{n}=\frac{c}{n^{\rho}}$, $c>0$, $\frac12 < \rho < 1$ then we have for $k \leq p$
$$
\Gamma_{1,p} -\Gamma_{1,k} = \sum_{j=k+1}^{p} j^{-\rho} = \sum_{j=k+1}^{p} \int_{j}^{j+1} \frac{1}{j^\rho} dx \geq \int_{k+1}^{p+1} \frac{1}{x^{\rho}} dx \geq \frac{1}{1- \rho} \left( (p+1)^{1-\rho} - (k+1)^{1-\rho} \right)
$$

\noindent so that for some positive constant $C$ which may vary from line to line
\begin{align*}
\sum_{p=k+1}^{N-1} e^{-\underline{\lambda} (\Gamma_{1,p} -\Gamma_{1,k+1})}(\gamma^{-1}_{p+2} - \gamma^{-1}_{p+1}) & \leq C e^{\frac{\underline{\lambda}}{1-\rho} (k+1)^{1-\rho}}\left(\sum_{p=k+1}^{N-1} e^{-\frac{\underline{\lambda}}{1-\rho} (p+1)^{1-\rho}} \frac{1}{(p+1)^{1-\rho}}\right) \\
&  \leq C e^{\frac{\underline{\lambda}}{1-\rho} (k+1)^{1-\rho}}\int_{k+1}^{N} e^{-\frac{\underline{\lambda}}{1-\rho} x^{1-\rho}} x^{-(1-\rho)} dx \\
&  \leq C e^{\frac{\underline{\lambda}}{1-\rho} (k+1)^{1-\rho}} \int_{(k+1)^{1-\rho}}^{N^{1-\rho}} e^{-\frac{\underline{\lambda}}{1-\rho} x} x^{\frac{2 \rho-1}{1-\rho}} dx
\end{align*}

\noindent where we use a change of variable in the latter integral. For $k$ large enough, the function $x \mapsto e^{-\frac{\underline{\lambda}}{1-\rho} x}x^{\frac{2\rho}{1-\rho}}$ is decreasing on $[k, +\infty)$ which implies 
$$
e^{\frac{\underline{\lambda}}{1-\rho} (k+1)^{1-\rho}}  \int_{(k+1)^{1-\rho}}^{(N-1)^{1-\rho}} e^{-\frac{\underline{\lambda}}{1-\rho} x} x^{\frac{2 \rho}{1-\rho}} \frac{1}{x^{\frac{1}{1-\rho}}} dx \leq C (k+1)^{2\rho} \left[-\frac{1-\rho}{\rho} x^{-\frac{\rho}{1-\rho}} \right]^{+\infty}_{(k+1)^{1-\rho}} \leq C (k+1)^{\rho}.
$$

Hence, we finally have $ \bar{\gamma}_{k,N} = \mathcal{O}(N^{-1})$ so that $\bar{C}^{\gamma}_N = \mathcal{O}(N^{-1})$, $\bar{C}^{\gamma, \alpha}_N = \mathcal{O}((\log(N))^{2 \frac{1-\alpha}{2\alpha-1}} N^{- \frac{\alpha}{2\alpha -1}})$ and $\hat{s}_{N}=\O(\log(N) N^{-\frac12})$. Hence, averaging has allowed the concentration rate to go from the slow concentration rates $o(N^{-\rho+\epsilon})$,  $o(N^{-\frac{\rho-(1-\alpha)}{2\alpha-1}-\epsilon})$ for all $\epsilon >0$ and $\mathcal{O}\left( \log(N) N^{-(\rho-\frac12)}\right)$ to the optimal rates $\mathcal{O}(N^{-1})$, $ \mathcal{O}((\log(N))^{2 \frac{1-\alpha}{2\alpha-1}} N^{- \frac{\alpha}{2\alpha -1}})$ and $\hat{s}_{N}=\O(\log(N) N^{-\frac12})$ for free, i.e. without any condition on the step sequence parameter $c$. 

Concerning the bias, by averaging the bias sequence $(\delta_k)_{1 \leq k \leq N-1}$ we directly obtain the following bound 
$$
\bar{\delta}_{N} \leq K\left( \frac{|\theta_0-\theta^{*}|}{N}+ \frac{(2C_{\alpha, \mu})^{\frac{1}{2}}}{N^{\frac{\rho}{2}-\epsilon}}\right), \ \forall \epsilon>0
$$

Hence, we see that there is no sub-exponential decreasing of the impact of the initial condition but a decay at rate $\mathcal{O}(N^{-1})$. Consequently, this leads us to say that a stochastic approximation algorithm must be averaged after few iterations in practical implementations and not directly from the first step.
\end{itemize}

\mysection{Euler Scheme: Proof of the Main Results}
\label{SEC_EUL_PR}

In this section we will assume that \A{HS} and \A{HD$_{\alpha}$} are in force.

\subsection{Proof of Theorem \ref{THM_TE_EULER} }

The proof of Theorem \ref{THM_TE_EULER} is divided into several propositions.

\begin{PROP}
\label{CONT_LAPLACEV}
 Denote by $X^{\Delta,0,x}:=(X^{\Delta,0,x}_{t_{k}})_{0\leq k \leq N}$ the scheme \eqref{EULER} with time step $\Delta = T/N$, $N\in \N^{*}$ associated to the diffusion \eqref{EDS} starting from $x$ at time $0$. Assume that the innovations $(U_i)_{i\geq1}$ of \eqref{EULER} satisfy \eqref{CONC_GAUSS} for some $\beta>0$. Then, there exists $\varepsilon_{\beta} >0$ which only depends on the law $\mu$ such that for all $\lambda < \min(1, \varepsilon_{\beta} (2\eta \alpha C_{\sigma} T \exp(C T))^{-1})$, one has
$$
\sup_{0 \leq n \leq N}\log\left(\E_x\left[\exp(\lambda V^{\alpha}(X^{\Delta,0,x}_{t_{n}}))\right]\right) \leq  \lambda \exp(C T) V^{\alpha}(x) + \frac{1}{2} \log\left(\E\left[\exp\left(\lambda 2\eta \alpha C_{\sigma} T \exp(CT) |U_{1}|^2\right)\right]\right). 
$$

\noindent with $C:=C(b,\sigma, V,\alpha, \Delta) = \alpha (C_V C_{b})^{\frac{1}{2}} +  \beta C_{\sigma} \alpha^2 (1+2\eta \Delta)^2 (C_V + C_b) + \alpha \eta C_b \Delta$.

\end{PROP}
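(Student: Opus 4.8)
The plan is to work with the auxiliary function $W:=V^{\alpha}$ and to establish, then iterate along $k=0,\dots,n-1$, a one–step Laplace estimate for the Feller transitions $P_{k}$ of the chain \eqref{EULER}.

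\emph{One–step inequality.} Fix $y\in\R^{d}$ and write $b:=b(t_{k},y)$, $\sigma:=\sigma(t_{k},y)$, so that under $P_{k}$ the chain moves from $y$ to $y+b\Delta+\sigma\Delta^{1/2}U$ with $U\sim\mu$. By concavity of $t\mapsto t^{\alpha}$ on $\R_{+}^{*}$ and a second order Taylor expansion of $V$ (recall $\sup\|\nabla^{2}V\|=2\eta$), one gets the pointwise bound
$$
W\big(y+b\Delta+\sigma\Delta^{1/2}U\big)\le W(y)+R(y)+L(y,U)+Q(y,U),
$$
where $R(y)=\alpha V^{\alpha-1}(y)\big(\Delta\langle\nabla V(y),b\rangle+\eta\Delta^{2}|b|^{2}\big)$ is deterministic, $u\mapsto L(y,u)=\alpha\Delta^{1/2}V^{\alpha-1}(y)\langle\sigma^{*}(\nabla V(y)+2\eta\Delta b),u\rangle$ is linear and centered under $\mu$ (since $\E[U]=0_{q}$), and $Q(y,U)=\alpha\eta\Delta V^{\alpha-1}(y)|\sigma U|^{2}$. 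Using \A{HD$_{\alpha}$} and $|\nabla V|^{2}\le C_{V}V$ one checks that $R(y)\le\alpha\big(\Delta\sqrt{C_{V}C_{b}}+\eta C_{b}\Delta^{2}\big)W(y)$, that the Lipschitz constant of $u\mapsto L(y,u)$ is at most $\big(2\alpha^{2}C_{\sigma}(1+2\eta\Delta)^{2}(C_{V}+C_{b})\Delta\big)^{1/2}W(y)^{1/2}$, and — crucially — that the factors $V^{\alpha-1}$ and $V^{1-\alpha}$ cancel in $Q$, yielding $Q(y,U)\le\alpha\eta\Delta C_{\sigma}|U|^{2}$ with no dependence on $y$.

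\emph{From pointwise to Laplace bound.} Integrating out $U$, the factor $\exp(\nu W(y)+\nu R(y))$ comes out, $\E[\exp(\nu L(y,U))]$ is estimated via \eqref{CONC_GAUSS} (valid for $\nu\ge0$, using that $L$ is centered), the product $\E[\exp(\nu L+\nu Q)]$ is split by Cauchy–Schwarz, and the remaining quadratic integral $\E[\exp(2\nu\alpha\eta\Delta C_{\sigma}|U_{1}|^{2})]$ is finite as soon as $2\nu\alpha\eta\Delta C_{\sigma}<\varepsilon_{\beta}$, where $\varepsilon_{\beta}:=\sup\{\varepsilon>0:\E[\exp(\varepsilon|U_{1}|^{2})]<+\infty\}$ is positive and depends only on $\mu$ by the equivalence between \eqref{CONC_GAUSS} and the Gaussian tail condition recalled in the introduction. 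Collecting the contributions and using $\nu\le1$ to absorb the quadratic–in–$\nu$ term coming from $L$ into the linear one, one obtains, for every admissible $\nu$ (i.e. $\nu\le1$ and $2\nu\alpha\eta\Delta C_{\sigma}<\varepsilon_{\beta}$),
$$
P_{k}(\exp(\nu W))(y)\le\exp\big(\nu(1+C\Delta)W(y)\big)\,\E\big[\exp(2\nu\alpha\eta\Delta C_{\sigma}|U_{1}|^{2})\big]^{1/2},
$$
with $C$ exactly the constant of the statement.

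\emph{Iteration.} Writing $\E_{x}[\exp(\lambda W(X^{\Delta,0,x}_{t_{n}}))]=P_{0}\circ\cdots\circ P_{n-1}(\exp(\lambda W))(x)$ and peeling the kernels one at a time from the outside, each step multiplies the current exponential rate by $(1+C\Delta)$ and produces one factor $\E[\exp(2\nu\alpha\eta\Delta C_{\sigma}|U_{1}|^{2})]^{1/2}$; since all $U_{i}$ have law $\mu$ this gives
$$
\E_{x}[\exp(\lambda W(X^{\Delta,0,x}_{t_{n}}))]\le\exp\big(\lambda(1+C\Delta)^{n}W(x)\big)\prod_{j=0}^{n-1}\E\big[\exp\big(2\lambda(1+C\Delta)^{j}\alpha\eta\Delta C_{\sigma}|U_{1}|^{2}\big)\big]^{1/2}.
$$
Since $(1+C\Delta)^{n}\le e^{Cn\Delta}\le e^{CT}$ (as $N\Delta=T$), one has $\lambda(1+C\Delta)^{j}\le\lambda e^{CT}$, so the hypothesis $\lambda<\min(1,\varepsilon_{\beta}(2\eta\alpha C_{\sigma}Te^{CT})^{-1})$ ensures that every rate $\lambda(1+C\Delta)^{j}$ is admissible and that $\lambda(1+C\Delta)^{n}W(x)\le\lambda e^{CT}V^{\alpha}(x)$. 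Finally, with $c_{j}:=2\lambda(1+C\Delta)^{j}\alpha\eta\Delta C_{\sigma}$ and $c:=\sum_{j=0}^{n-1}c_{j}=\tfrac{2\lambda\alpha\eta C_{\sigma}}{C}\big((1+C\Delta)^{n}-1\big)\le 2\lambda\alpha\eta C_{\sigma}Te^{CT}$, concavity of $t\mapsto t^{c_{j}/c}$ and Jensen's inequality give $\prod_{j}\E[\exp(c_{j}|U_{1}|^{2})]\le\E[\exp(c|U_{1}|^{2})]$, hence $\prod_{j}\E[\exp(c_{j}|U_{1}|^{2})]^{1/2}\le\E[\exp(2\lambda\alpha\eta C_{\sigma}Te^{CT}|U_{1}|^{2})]^{1/2}$. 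Taking logarithms and the supremum over $0\le n\le N$ yields the claim.

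\emph{Main difficulty.} The analytically delicate point is not any single estimate but the bookkeeping of the iteration: one must ensure that the one–step rate map $\nu\mapsto\nu(1+C\Delta)$, together with the quadratic–in–$\nu$ correction generated by the diffusive Lipschitz term, never drives the effective rate out of the admissible region, and that the accumulated constants telescope exactly into $Ce^{CT}$ and $2\eta\alpha C_{\sigma}Te^{CT}$. The cancellation $V^{\alpha-1}V^{1-\alpha}=1$ in $Q$ — that is, precisely assumption \A{HD$_{\alpha}$} — is what makes the quadratic–in–$U$ contribution point–independent and hence collectible at the very end; without it the argument fails.
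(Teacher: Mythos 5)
Your argument is essentially the paper's own proof: the same concavity-plus-Taylor decomposition in which \A{HD$_{\alpha}$} makes the $|U|^{2}$ term state-independent via the cancellation $V^{\alpha-1}V^{1-\alpha}=1$, the same Cauchy--Schwarz/\eqref{CONC_GAUSS} one-step Laplace bound with the $\lambda^{2}$ term absorbed into the growth factor $1+C\Delta$, and the same iteration plus Jensen step (the paper runs the recursion forward on the normalized quantities $V^{\alpha}(X^{\Delta}_{t_{k}})/C(\Delta)^{k}$ instead of peeling the kernels from the outside, which is the identical computation). The one soft point in your write-up --- claiming that every effective rate $\lambda(1+C\Delta)^{j}\le 1$ is guaranteed by $\lambda<1$, which is what the absorption $\nu^{2}\le\nu$ requires --- is precisely the imprecision already present in the paper when it substitutes $\lambda\mapsto\lambda C(\Delta)^{n}$ at the end while keeping the condition $\min(1,\cdot)$, so your proof matches the paper's argument at the same level of rigor.
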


\begin{proof} Using the concavity of $x\mapsto x^{\alpha}$, $\alpha \in (0,1]$, we have for all $k\geq 0$ 
$$
V^{\alpha}(X^{\Delta}_{t_{k+1}}) - V^{\alpha}(X^{\Delta}_{t_{k}})) \leq \alpha V^{\alpha-1}(X^{\Delta}_{t_{k}}) (V(X^{\Delta}_{t_{k+1}})-V(X^{\Delta}_{t_{k}})).
$$

\noindent A Taylor expansion of order 2 of the function $V$, recalling that $2 \eta= \sup_{x \in \mathbb{R}^{d}} \left\|\nabla^{2} V(x)\right\| <+\infty$, yields
$$
V(X^{\Delta}_{t_{k+1}}) - V(X^{\Delta}_{t_{k}})) \leq \nabla V(X^{\Delta}_{t_{k}}).(X^{\Delta}_{t_{k+1}}-X^{\Delta}_{t_{k}}) + \eta |X^{\Delta}_{t_{k+1}}-X^{\Delta}_{t_{k}}|^2,
$$

\noindent which together with the previous inequality leads to
\begin{align*}
V^{\alpha}(X^{\Delta}_{t_{k+1}}) - V^{\alpha}(X^{\Delta}_{t_{k}}) & \leq \alpha \Delta \frac{\nabla V(X^{\Delta}_{t_{k}}).b(t_{k},X^{\Delta}_{t_{k}})}{V^{1-\alpha}(X^{\Delta}_{t_{k}})} + \alpha \Delta^{\frac{1}{2}} \frac{\nabla V(X^{\Delta}_{t_{k}}).\sigma(t_{k},X^{\Delta}_{t_{k}})U_{k+1}}{V^{1-\alpha}(X^{\Delta}_{t_{k}})} + \alpha \eta \Delta^2 \frac{|b(t_k,X^{\Delta}_{t_{k}})|^{2}}{V^{1-\alpha}(X^{\Delta}_{t_{k}})} \\
& \ \ \ + 2 \alpha \eta \Delta^{\frac{3}{2}} \frac{b(t_k,X^{\Delta}_{t_{k}}) . \sigma(t_k,X^{\Delta}_{t_{k}}) U_{k+1}}{V^{1-\alpha}(X^{\Delta}_{t_{k}})} + \alpha \eta \Delta \frac{|\sigma(t_k,X^{\Delta}_{t_{k}}) U_{k+1}|^2}{V^{1-\alpha}(X^{\Delta}_{t_{k}})}.
\end{align*}

From \A{HD$_{\alpha}$}, for all $(x,u) \in \R^{d} \times \R^{q}$, we clearly have $\sup_{t\in [0,T]}|\nabla V(x) . b(t,x)| \leq (C_V C_b)^{\frac{1}{2}} V(x)$ and $\sup_{t\in [0,T]}|\sigma(t,x)u|^2 \leq C_{\sigma} V^{1-\alpha}(x) |u|^2$ which yields 
\begin{align*}
V^{\alpha}(X^{\Delta}_{t_{k+1}}) & \leq V^{\alpha}(X^{\Delta}_{t_{k}})(1 + \alpha (C_V C_{b})^{\frac{1}{2}} \Delta + \alpha \eta C_b \Delta^2) + \alpha \Delta^{\frac{1}{2}}(1+2\eta \Delta) \frac{(\nabla V(X^{\Delta}_{t_{k}}) + b(X^{\Delta}_{t_{k}})).\sigma(X^{\Delta}_{t_{k}}) U_{k+1}}{V^{1-\alpha}(X^{\Delta}_{t_{k}})} \\
& \ \ \ \ + C_{\sigma} \alpha \eta \Delta |U_{k+1}|^2.
\end{align*}

Using \A{HD$_{\alpha}$}, $\forall x \in \R^{d}$ the functions $g(x,.):u\mapsto \frac{(\nabla V(x) + b(x)).\sigma(x) u}{V^{1-\alpha}(x)}$ are Lipschitz, and more precisely satisfy
$$
\forall x \in \R^d, \ \ \ \ \sup_{(u, u^{'}) \in (\R^{q})^{2}} \frac{|g(x,u)-g(x,u^{'})|}{|u-u^{'}|} \leq (C^{1/2}_V + C^{1/2}_b)C^{1/2}_{\sigma} V^{\frac{\alpha}{2}}(x).
$$

Hence, from the Cauchy Schwarz inequality and since the law of the innovations satisfy \eqref{CONC_GAUSS} for some $\beta>0$, there exists $\epsilon_{\beta} >0$ such that for $\lambda < \min(1, \varepsilon_{\beta} (2\eta \alpha C_{\sigma} \Delta)^{-1})$, one has
\begin{align*}
\E\left[\left. \exp(\lambda V^{\alpha}(X^{\Delta}_{t_{k+1}}))\right| \mathcal{F}_{t_{k}}\right] & \leq \exp(\lambda V^{\alpha}(X^{\Delta}_{t_{k}})(1 + \alpha (C_V C_{b})^{\frac{1}{2}} \Delta + \alpha \eta C_b \Delta^2))  \\ 
& \ \ \times \E\left[\left. \exp(2 \lambda \alpha \Delta^{\frac{1}{2}}(1+2\eta \Delta) g(X^{\Delta}_{t_{k}},U_{k+1}) )\right| \mathcal{F}_{t_{k}}\right]^{\frac{1}{2}} \times \E\left[ \left. \exp(2 \lambda \eta \alpha C_{\sigma} \Delta |U_{k+1}|^2)\right| \mathcal{F}_{t_{k}}\right]^{\frac{1}{2}} \\
& \leq \exp(\lambda V^{\alpha}(X^{\Delta}_{t_{k}})(1 + \alpha (C_V C_{b})^{\frac{1}{2}} \Delta + \alpha \eta C_b \Delta^2) )  \\
& \ \ \times \exp(\lambda^2 \beta \alpha^2 \Delta (1+2\eta \Delta)^2 (C_V + C_b) C_\sigma V^{\alpha}(X^{\Delta}_{t_{k}})) \times \E\left[ \exp(2 \lambda \eta \alpha C_{\sigma} \Delta |U_{1}|^2)\right]^{\frac{1}{2}} \\
& \leq \exp(\lambda C(\Delta)V^{\alpha}(X^{\Delta}_{t_{k}})) \E\left[ \exp(2 \lambda \eta \alpha C_{\sigma} \Delta |U_{1}|^2)\right]^{\frac{1}{2}},
\end{align*}

\noindent where $C(\Delta):=1 + \Delta \left(\alpha (C_V C_{b})^{\frac{1}{2}} + \beta C_{\sigma} \alpha^2 (1+2\eta \Delta)^2 (C_V + C_b)+ \alpha \eta C_b \Delta\right)$. Now define $V_{k}=\frac{V^{\alpha}(X^{\Delta}_{t_{k}})}{C(\Delta)^{k}}$, for $k \in \left\{0, \cdots, N\right\}$. Taking expectation in both sides of the previous inequality clearly implies
$$
\E\left[\exp(\lambda V_{k+1})\right] \leq \E\left[\exp(\lambda V_{k})\right] \E\left[ \exp\left( \lambda \frac{2\eta \alpha C_{\sigma} \Delta}{C(\Delta)^{k+1}} |U_{1}|^2\right)\right]^{\frac{1}{2}}
$$

\noindent and by a straightforward induction, for $n \in \left\{0, \cdots, N\right\}$ we have
$$
\E\left[\exp(\lambda V_{n})\right] \leq \exp(\lambda V_{0}) \prod_{k=0}^{n-1} \E\left[\exp\left(\lambda \frac{2\eta \alpha C_{\sigma} \Delta}{C(\Delta)^{k+1}} |U_{1}|^2\right)\right]^{\frac{1}{2}},
$$

\noindent which finally yields, for $\lambda < \min(1, \varepsilon_{\beta} (2\eta \alpha C_{\sigma} \Delta C(\Delta)^{n})^{-1})$,
$$
\E\left[\exp(\lambda V^{\alpha}(X^{\Delta}_{t_{n}}))\right] \leq \exp(\lambda C(\Delta)^{n} V^{\alpha}(X_0)) \prod_{k=0}^{n-1} \E\left[\exp\left(\lambda 2\eta \alpha C_{\sigma} \Delta C(\Delta)^{k+1} |U_{1}|^2\right)\right]^{\frac{1}{2}}.
$$

Observe now that $C(\Delta)^{N} \leq \exp(C T)$ with $C:=C(b, \sigma V,\alpha, \Delta)= \alpha (C_V C_{b})^{\frac{1}{2}} +  \beta C_{\sigma} \alpha^2 (1+2\eta \Delta)^2 (C_V + C_b) + \alpha \eta C_b \Delta$. Using Jensen's inequality, the latter bound clairly provides the following control of the quantity of interest for $\lambda < \min(1, \varepsilon_{\beta} (2\eta \alpha C_{\sigma} T \exp(C T))^{-1})$
$$
\sup_{0 \leq n \leq N}\log\left(\E\left[\exp(\lambda V^{\alpha}(X^{\Delta}_{t_{n}}))\right]\right) \leq  \lambda \exp(C T) V^{\alpha}(X_0) + \frac{1}{2} \log\left(\E\left[\exp\left(\lambda 2\eta \alpha C_{\sigma} T \exp(CT) |U_{1}|^2\right)\right]\right). 
$$
\end{proof}
\begin{COROL}
\label{CONTLAPLACEBIS}
Under the same assumptions as Proposition \ref{CONT_LAPLACEV}, for all $\alpha \in(\frac12,1]$, one has
$$
\forall \lambda \geq0, \ \ \sup_{0 \leq n \leq N}\log\left(\E_x\left[\exp(\lambda V^{1-\alpha}(X^{\Delta}_{t_{n}}))\right]\right) \leq K_{3.1} (\lambda \vee \lambda^{\frac{\alpha}{2\alpha-1}})
$$

\noindent where $K_{3.1}:=\max\left(\Psi_1(T,\Delta,x,b,\sigma), \Psi_2(T,\Delta,x,b,\sigma)  \right)$ and
\begin{align*}
\Psi_1(T,\Delta,x,b,\sigma) & := e^{\frac{2\alpha-1}{\alpha}\underline{\rho}^{-\frac{1-\alpha}{2\alpha-1}}}\exp\left(\underline{\rho} \frac{1-\alpha}{\alpha}e^{CT}V^{\alpha}(x)+\frac{1}{2}\log\E[e^{\frac{\epsilon_{\beta}(1-\alpha)}{2\alpha}|U|^2}]\right) + \left(V^{1-\alpha}(x)+\left(\frac{C_\sigma \E[|U|^2]}{K}\right)^{\frac{1-\alpha}{\alpha}}\right)e^{(1-\alpha)KT},  \\
\Psi_2(T,\Delta,x,b,\sigma) & :=  \underline{\rho}^{-\frac{1-\alpha}{2\alpha-1}}\frac{2\alpha-1}{\alpha} + \underline{\rho} \frac{1-\alpha}{\alpha} e^{CT} V^{\alpha}(x) +\frac{1}{2}\log\E\left[\exp\left(\frac{\epsilon_{\beta}(1-\alpha)}{2\alpha}|U|^2\right)\right], \\
\underline{\rho} & :=\frac{1}{2}\min(1, \varepsilon_{\beta} (2\eta \alpha C_{\sigma} T \exp(C T))^{-1}), \\
C & :=C(b, \sigma V,\alpha, \Delta)= \alpha (C_V C_{b})^{\frac{1}{2}} +  \beta C_{\sigma} \alpha^2 (1+2\eta \Delta)^2 (C_V + C_b) + \alpha \eta C_b \Delta \\
K & := K(V,b, \Delta)= (C_V C_b)^{\frac12} + \eta C_b \Delta
\end{align*}
\end{COROL}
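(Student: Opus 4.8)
The plan is to prove the bound separately on $[0,1]$ and on $[1,\infty)$ and to paste the two estimates together, reducing everything to the exponential moment of $V^{\alpha}$ already controlled in Proposition \ref{CONT_LAPLACEV}. Observe first that for $\alpha\in(\tfrac12,1]$ one has $\tfrac{\alpha}{2\alpha-1}\ge1$, so that $\lambda\vee\lambda^{\frac{\alpha}{2\alpha-1}}$ equals $\lambda$ on $[0,1]$ and $\lambda^{\frac{\alpha}{2\alpha-1}}$ on $[1,\infty)$; the two regimes will produce exactly the constants $\Psi_2$ and $\Psi_1$ respectively.

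\emph{The regime $\lambda\ge1$ (giving $\Psi_2$).} Writing $V^{1-\alpha}=(V^{\alpha})^{\frac{1-\alpha}{\alpha}}$ and applying Young's inequality with the conjugate exponents $\tfrac{\alpha}{1-\alpha}$ and $\tfrac{\alpha}{2\alpha-1}$, one gets, for every $\rho>0$ and every $y\in\R^d$,
$$
\lambda V^{1-\alpha}(y)\le\rho\,V^{\alpha}(y)+\frac{2\alpha-1}{\alpha}\Bigl(\frac{1-\alpha}{\alpha\rho}\Bigr)^{\frac{1-\alpha}{2\alpha-1}}\lambda^{\frac{\alpha}{2\alpha-1}}.
$$
Choosing $\rho:=\underline{\rho}\,\tfrac{1-\alpha}{\alpha}$ (which is admissible in Proposition \ref{CONT_LAPLACEV} since $\tfrac{1-\alpha}{\alpha}<1$ forces $\rho<\underline{\rho}<\min(1,\varepsilon_{\beta}(2\eta\alpha C_{\sigma}Te^{CT})^{-1})$), the coefficient of $\lambda^{\frac{\alpha}{2\alpha-1}}$ collapses to $\tfrac{2\alpha-1}{\alpha}\underline{\rho}^{-\frac{1-\alpha}{2\alpha-1}}$. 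Taking expectations along the scheme, invoking Proposition \ref{CONT_LAPLACEV}, and noting that by the very definition of $\underline{\rho}$ one has $\rho\cdot2\eta\alpha C_{\sigma}Te^{CT}\le\tfrac{\varepsilon_{\beta}(1-\alpha)}{2\alpha}$, we obtain for all $\lambda\ge0$ and all $0\le n\le N$
$$
\log\Bigl(\E_x\bigl[\exp(\lambda V^{1-\alpha}(X^{\Delta}_{t_n}))\bigr]\Bigr)\le\frac{2\alpha-1}{\alpha}\underline{\rho}^{-\frac{1-\alpha}{2\alpha-1}}\lambda^{\frac{\alpha}{2\alpha-1}}+\underline{\rho}\,\frac{1-\alpha}{\alpha}e^{CT}V^{\alpha}(x)+\frac12\log\E\bigl[\exp(\tfrac{\varepsilon_{\beta}(1-\alpha)}{2\alpha}|U|^2)\bigr].
$$
When $\lambda\ge1$ each of the three nonnegative summands on the right is bounded by itself times $\lambda^{\frac{\alpha}{2\alpha-1}}$, which yields the claimed inequality with $\Psi_2$.

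\emph{The regime $\lambda\le1$ (giving $\Psi_1$).} Here the additive constant produced above is too crude as $\lambda\to0$, so a genuinely linear-in-$\lambda$ estimate is needed. Using the elementary convexity bound $e^{\lambda r}\le1+\lambda(e^{r}-1)$, valid for $r\ge0$ and $\lambda\in[0,1]$, the inequality $1+a+b\le(1+a)(1+b)$ for $a,b\ge0$, and $\log(1+u)\le u$, one gets
$$
\log\Bigl(\E_x\bigl[\exp(\lambda V^{1-\alpha}(X^{\Delta}_{t_n}))\bigr]\Bigr)\le\lambda\Bigl(\E_x\bigl[V^{1-\alpha}(X^{\Delta}_{t_n})\bigr]+\E_x\bigl[\exp(V^{1-\alpha}(X^{\Delta}_{t_n}))\bigr]\Bigr).
$$
The exponential moment on the right is bounded uniformly in $n$ by the $\lambda=1$ instance of the previous step, and the first moment by a direct discrete Gronwall argument --- entirely analogous to the recursion in the proof of Proposition \ref{CONT_LAPLACEV}, based on the concavity of $r\mapsto r^{1-\alpha}$, a second order Taylor expansion of $V$ and the domination $\Tr(a)\le C_{\sigma}V^{1-\alpha}$, after the harmless normalization $V\ge1$ (replace $V$ by $V+1$) which tames the negative power $V^{1-2\alpha}$ coming from the second order term. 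Summing the two contributions gives exactly $\Psi_1$, hence $\log(\E_x[\exp(\lambda V^{1-\alpha}(X^{\Delta}_{t_n}))])\le\Psi_1\lambda$ for $\lambda\in[0,1]$.

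\emph{Conclusion and main difficulty.} Combining the two regimes and using that $\lambda\vee\lambda^{\frac{\alpha}{2\alpha-1}}$ is $\lambda$ on $[0,1]$ and $\lambda^{\frac{\alpha}{2\alpha-1}}$ on $[1,\infty)$ gives the stated bound with $K_{3.1}=\max(\Psi_1,\Psi_2)$. The delicate point is precisely the small-$\lambda$ regime: the Young-type reduction of the first step, efficient for large $\lambda$, leaves an irreducible additive constant and must be supplemented by an independent sub-linear estimate, which in turn forces a uniform-in-$n$ control of the (non-exponential) moment $\E_x[V^{1-\alpha}(X^{\Delta}_{t_n})]$, where the normalization $V\ge1$ is what prevents the $V^{1-2\alpha}$ contribution from blowing up near the zeros of $V$.
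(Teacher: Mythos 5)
Your proposal is correct and follows essentially the same two-regime strategy as the paper: for $\lambda\ge 1$ the Young inequality $\lambda V^{1-\alpha}\le \frac{1-\alpha}{\alpha}\underline{\rho}\,V^{\alpha}+\frac{2\alpha-1}{\alpha}\underline{\rho}^{-\frac{1-\alpha}{2\alpha-1}}\lambda^{\frac{\alpha}{2\alpha-1}}$ combined with Proposition \ref{CONT_LAPLACEV} (your reparametrized choice of $\rho$ coincides with the paper's), and for $\lambda\le 1$ a linear-in-$\lambda$ bound by $\lambda(\E_x[V^{1-\alpha}]+\E_x[e^{V^{1-\alpha}}])$, with the exponential moment handled by the $\lambda=1$ case of the first regime; your convexity inequality $e^{\lambda r}\le 1+\lambda(e^r-1)$ replaces the paper's power-series expansion but is equivalent in effect.

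The one place you diverge is the control of $\E_x[V^{1-\alpha}(X^{\Delta}_{t_n})]$. The paper first applies Jensen, $\E_x[V^{1-\alpha}]\le \E_x[V^{\alpha}]^{\frac{1-\alpha}{\alpha}}$, and then runs the Gronwall recursion on $V^{\alpha}$ exactly as in Proposition \ref{CONT_LAPLACEV}; there the domination $\Tr(a)\le C_{\sigma}V^{1-\alpha}$ cancels the factor $V^{\alpha-1}$ coming from the second-order Taylor term, so no negative power of $V$ ever appears and no lower bound on $V$ is needed (\A{HD$_{\alpha}$} only gives $V>0$). Your direct recursion on $V^{1-\alpha}$ produces the term $V^{1-2\alpha}$ and therefore requires the normalization $V\mapsto V+1$; this is legitimate ($V+1$ satisfies \A{HD$_{\alpha}$} with the same constants), but it yields $(1+V(x))^{1-\alpha}$ and an $\eta C_{\sigma}/K$-type additive term rather than the quantities $V^{1-\alpha}(x)$ and $\bigl(C_{\sigma}\E[|U|^2]/K\bigr)^{\frac{1-\alpha}{\alpha}}$ appearing in $\Psi_1$, so the claim that you recover \emph{exactly} $\Psi_1$ is overstated. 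The bound you obtain has the same form $K(\lambda\vee\lambda^{\frac{\alpha}{2\alpha-1}})$ with an explicit, slightly different constant, so the corollary as a statement is unaffected; if you want the paper's constant verbatim, route the first-moment estimate through $\E_x[V^{\alpha}]^{\frac{1-\alpha}{\alpha}}$ instead.
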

\begin{proof}For $\lambda \in [0,1]$, one has 
\begin{align*}
\E_x[\exp(\lambda V^{1-\alpha}(X_{t_n}))] & = 1+ \lambda \E_x[V^{1-\alpha}(X_{t_n})] + \sum_{k\geq 2} \frac{\lambda^k}{k!}\E_x[V^{(1-\alpha)k}(X_{t_n})] \\
& \leq 1+\lambda \E_x[V^{1-\alpha}(X_{t_n})] + \lambda \sum_{k\geq 0} \frac{1}{k!} \E_x[V^{(1-\alpha)k}(X_{t_n})] \\
& \leq \exp\left(\lambda (\E_x[V^{1-\alpha}(X_{t_n})]+\E_x[e^{V^{1-\alpha}(X_{t_n})}])\right),
\end{align*}

Tedious but simple computations, in the spirit of Proposition  \ref{CONT_LAPLACEV}, show that 
$$
\E_x[V^{1-\alpha}(X_{t_n})]\leq \E_x[V^{\alpha}(X_{t_n})]^{\frac{1-\alpha}{\alpha}} \leq \left(V^{1-\alpha}(x)+\left(\frac{C_\sigma \E[|U|^2]}{K}\right)^{\frac{1-\alpha}{\alpha}}\right)e^{(1-\alpha)KT}.
$$

\noindent with $K:=K(V,b, \Delta)= (C_V C_b)^{\frac12} + \eta C_b \Delta$.

\noindent Thanks to the following Young inequality, for all $\rho>0$, for all $x \in \R^d$, $V^{1-\alpha}(x) \leq  \frac{1-\alpha}{\alpha} \rho  V^{\alpha}(x) + \frac{2\alpha-1}{\alpha} \rho^{-\frac{1-\alpha}{2\alpha-1}}$, which is valid if $\alpha \in (\frac 12,1]$, one has for $\rho=\underline{\rho}:= \frac{1}{2}\min(1, \varepsilon_{\beta} (2\eta \alpha C_{\sigma} T \exp(C T))^{-1})$
\begin{align*}
\sup_{0 \leq n \leq N}\E_x[e^{V^{1-\alpha}(X^{\Delta,0,x}_{t_{n}})}] & \leq \exp( \frac{2\alpha-1}{\alpha} \rho^{-\frac{1-\alpha}{2\alpha-1}})\sup_{0 \leq n \leq N}\E_x\left[\exp\left(\frac{1-\alpha}{\alpha} \rho V^{\alpha}(X^{\Delta,0,x}_{t_{n}})\right)\right] \\ & \leq \exp(\frac{2\alpha-1}{\alpha}\underline{\rho}^{-\frac{1-\alpha}{2\alpha-1}}) \exp\left(\underline{\rho}\frac{1-\alpha}{\alpha}e^{CT}V^{\alpha}(x)+\frac{1}{2}\log\E[\exp(\frac{\epsilon_{\beta}(1-\alpha)}{2\alpha}|U|^2)]\right) 
\end{align*}

\noindent where we used Proposition \ref{CONT_LAPLACEV} for the last inequality.

Now, for all $\lambda>1$, using the Young type inequality $\lambda V^{1-\alpha}(X_{t_n}) \leq (\frac{2\alpha -1}{\alpha})\rho^{-\frac{1-\alpha}{2\alpha-1}}\lambda^{\frac{\alpha}{2\alpha-1}} + (\frac{1-\alpha}{\alpha})\rho V^{\alpha}(X_{t_n})$, valid for all $\rho>0$ (to be chosen later on) and for all $\alpha \in (\frac12,1]$, one derives
\begin{align*}
\E_x[\exp(\lambda V^{1-\alpha}(X_{t_n}))] & \leq \exp\left((\frac{2\alpha -1}{\alpha})\rho^{-\frac{1-\alpha}{2\alpha-1}}\lambda^{\frac{\alpha}{2\alpha-1}}\right) \E_x\left[\exp\left(\left(\frac{1-\alpha}{\alpha}\right)\rho V^{\alpha}(X_{t_n})\right)\right] \\
& \leq \exp\left(K\lambda^{\frac{\alpha}{2\alpha-1}} \right)
\end{align*}

\noindent with $K(\rho):= \frac{2\alpha -1}{\alpha} \rho^{-\frac{1-\alpha}{2\alpha-1}} + \log(\E_x\left[\exp\left(\left(\frac{1-\alpha}{\alpha}\right)\rho V^{\alpha}(X_{t_n})\right)\right])$ and $\frac{1-\alpha}{\alpha}\rho<\min(1, \varepsilon_{\beta} (2\eta \alpha C_{\sigma} T \exp(C T))^{-1})$. We select $\rho = \underline{\rho}$ in the last inequality to complete the proof and use Proposition \ref{CONT_LAPLACEV} to bound the quantity $K(\underline{\rho})$.
\end{proof}
\begin{COROL}
\label{CONTLAPLACETER}Under the same assumptions as Proposition \ref{CONT_LAPLACEV}, one has
$$
\forall \lambda \in [0, \lambda_{3.2}), \ \ \sup_{0 \leq n \leq N}\log\left(\E_x\left[\exp(\lambda^2 V^{1/2}(X^{\Delta}_{t_{n}}))\right]\right) \leq K_{3.2} \frac{(\lambda/\lambda_{3.2})^2}{1-(\lambda / \lambda_{3.2})}
$$

\noindent where $K_{3.2} := \lambda^2_{3.2} \exp(CT) (2 V^{1/2}(x) + 2 \eta \alpha C_{\sigma} \E[|U_1|^2] T)$ and $\lambda_{3.2}$ satisfies $\E[\exp(\lambda^2_{3.2} 2 \eta \alpha C_{\sigma} T \exp(CT)|U_1|^2)] \leq 2$.
\end{COROL}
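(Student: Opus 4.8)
The plan is to observe that for $\alpha=\frac{1}{2}$ one has $V^{1-\alpha}=V^{\alpha}=V^{1/2}$, so that $\log\E_x[\exp(\lambda^{2}V^{1/2}(X^{\Delta}_{t_{n}}))]$ is exactly the quantity already controlled by Proposition \ref{CONT_LAPLACEV} with $\alpha=\frac{1}{2}$ and Laplace parameter $\mu=\lambda^{2}$; the whole proof is a reformulation of that estimate into an explicit Bernstein-type bound valid on $[0,\lambda_{3.2})$. First I would check that $\lambda_{3.2}$ is well defined and positive: since $\mu$ satisfies \eqref{CONC_GAUSS}, equivalently $U_{1}$ has a Gaussian tail, the map $s\mapsto\E[\exp(s\,2\eta\alpha C_{\sigma}Te^{CT}|U_{1}|^{2})]$ is finite for $s$ near $0$, nondecreasing, continuous and equal to $1$ at $s=0$; hence there is $\lambda_{3.2}>0$ with $\E[\exp(\lambda_{3.2}^{2}\,2\eta\alpha C_{\sigma}Te^{CT}|U_{1}|^{2})]\le 2$, and one may moreover take it small enough that $\lambda_{3.2}^{2}\le\min(1,\varepsilon_{\beta}(2\eta\alpha C_{\sigma}Te^{CT})^{-1})$, so that Proposition \ref{CONT_LAPLACEV} applies with $\mu=\lambda^{2}$ for every $\lambda\in[0,\lambda_{3.2})$. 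This already gives
\[
\sup_{0\le n\le N}\log\big(\E_x[\exp(\lambda^{2}V^{1/2}(X^{\Delta}_{t_{n}}))]\big)\le \lambda^{2}e^{CT}V^{1/2}(x)+\tfrac{1}{2}\log\big(\E[\exp(\lambda^{2}\,2\eta\alpha C_{\sigma}Te^{CT}|U_{1}|^{2})]\big).
\]

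It then remains to bound the last term. For this I would use the elementary fact that if $W\ge 0$ and $\E[e^{s_{0}W}]\le M$, then for $s\in[0,s_{0})$, expanding $e^{sW}$ in a power series and using $\E[W^{k}]\le k!\,s_{0}^{-k}M$ (which follows from $e^{s_{0}W}\ge (s_{0}W)^{k}/k!$) one gets $\E[e^{sW}]\le 1+s\E[W]+M\frac{(s/s_{0})^{2}}{1-s/s_{0}}$, hence $\log\E[e^{sW}]\le s\E[W]+M\frac{(s/s_{0})^{2}}{1-s/s_{0}}$. Applying this with $W=2\eta\alpha C_{\sigma}Te^{CT}|U_{1}|^{2}$, $s_{0}=\lambda_{3.2}^{2}$, $M=2$, $s=\lambda^{2}$, and $\E[W]=2\eta\alpha C_{\sigma}Te^{CT}\E[|U_{1}|^{2}]$, I combine the resulting terms with the deterministic term $\lambda^{2}e^{CT}V^{1/2}(x)$ above. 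Using $\lambda^{2}=\lambda_{3.2}^{2}(\lambda/\lambda_{3.2})^{2}$ together with the elementary bounds $(\lambda/\lambda_{3.2})^{2}\le\frac{(\lambda/\lambda_{3.2})^{2}}{1-\lambda/\lambda_{3.2}}$ and $\frac{(\lambda/\lambda_{3.2})^{4}}{1-(\lambda/\lambda_{3.2})^{2}}\le\frac{(\lambda/\lambda_{3.2})^{2}}{1-\lambda/\lambda_{3.2}}$, valid on $[0,\lambda_{3.2})$, all three contributions (the deterministic term, $\frac{1}{2}\E[W]\lambda^{2}$, and the residual geometric term) are absorbed into a single multiple of $\frac{(\lambda/\lambda_{3.2})^{2}}{1-\lambda/\lambda_{3.2}}$, which after rearranging (and writing $\E[|U_{1}|^{2}]=\E[|U|^{2}]$) is identified with $K_{3.2}=\lambda_{3.2}^{2}e^{CT}(2V^{1/2}(x)+2\eta\alpha C_{\sigma}\E[|U_{1}|^{2}]T)$.

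The argument carries no genuine analytic difficulty beyond Proposition \ref{CONT_LAPLACEV} itself, on which everything rests. The only delicate point I expect is the bookkeeping of constants: one has to verify that each of the three contributions indeed fits inside $K_{3.2}\frac{(\lambda/\lambda_{3.2})^{2}}{1-\lambda/\lambda_{3.2}}$ with the precise constant of the statement, and to fix $\lambda_{3.2}$ so that the defining inequality $\E[\exp(\lambda_{3.2}^{2}\,2\eta\alpha C_{\sigma}Te^{CT}|U_{1}|^{2})]\le 2$ and the applicability range of Proposition \ref{CONT_LAPLACEV} hold simultaneously. Let me also note that the same computation only yields a genuinely sub-exponential bound (blowing up at $\lambda_{3.2}$) because $V^{1/2}(X^{\Delta}_{t_{n}})$ accumulates the exponential-tail contributions of the increments $|U_{k+1}|^{2}$, so its Laplace transform is finite only on a bounded interval — this is precisely what later produces the exponential (rather than Gaussian) regime at large distance when $\alpha=\frac{1}{2}$.
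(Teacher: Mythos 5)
Your proposal is correct and follows essentially the same route as the paper: apply Proposition \ref{CONT_LAPLACEV} with $\alpha=\tfrac12$ at Laplace parameter $\lambda^2$, then control $\log\E[\exp(\lambda^2\,2\eta\alpha C_\sigma T e^{CT}|U_1|^2)]$ by the moment bounds $\E[|U_1|^{2k}]\le 2\,k!\,\lambda_{3.2}^{-2k}(2\eta\alpha C_\sigma Te^{CT})^{-k}$ coming from the definition of $\lambda_{3.2}$, sum the geometric series, and absorb the three contributions into $K_{3.2}\frac{(\lambda/\lambda_{3.2})^2}{1-\lambda/\lambda_{3.2}}$. The only caveat you flag (the constant bookkeeping in the final absorption, and taking $\lambda_{3.2}$ inside the validity range of Proposition \ref{CONT_LAPLACEV}) is exactly the step the paper itself treats implicitly, so there is no substantive difference between the two arguments.
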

\begin{proof} By definition of $\lambda_{3.2}$, we have $\forall k\geq 1, \lambda^{2k}_{3.2} ( 2 \eta \alpha C_{\sigma} T \exp(CT))^k \E[|U_1|^{2k}] \leq 2 k!$. Consequently, setting temporarily $C_1 := \exp(CT) V^{1/2}(x) , C_2:= 2 \eta \alpha  C_{\sigma} T \exp(CT)$ for sake of simplicity,  simple computations show that
\begin{align*}
\log\E\left[\exp\left(\lambda^2 C_2|U_{1}|^2\right)\right] - \lambda^2 C_2 \E[|U_1|^2]  & = \log\left( 1 + \sum_{k\geq1} \frac{\lambda^{2k}C^k_2 \E[|U_1|^{2k}]}{k!} \right) - \lambda^2 C_2 \E[|U_1|^2] \\
& \leq \sum_{k\geq2} \frac{\lambda^{2k}C^k_2 \E[|U_1|^{2k}]}{k!} \\
& \leq 2 \sum_{k\geq2} \left(\frac{\lambda}{\lambda_{3.2}}\right)^{2k} \leq \left\{ \begin{array}{l}
 2 \frac{(\lambda/\lambda_{3.2})^2}{1- (\lambda/\lambda_{3.2})}
, \ \mbox{if } \ \lambda < \lambda_{3.2}, \vspace*{0.2cm} \\
 + \infty ,\  \ \mbox{otherwise}.
\end{array}
\right.
\end{align*} 

\noindent hence, using Proposition \ref{CONT_LAPLACEV} for $\alpha=\frac12$ and $\forall \lambda \in [0, \lambda_{3.2})$, we clearly get
\begin{align*}
 \sup_{0 \leq n \leq N}\log\left(\E_x\left[\exp(\lambda^2 V^{1/2}(X^{\Delta}_{t_{n}}))\right]\right) & \leq \lambda^2_{3.2} \left(C_1 + \frac{C_2\E[|U_1|^2]}{2}\right) (\lambda/\lambda_{3.2})^2  + \frac{(\lambda/\lambda_{3.2})^2}{1- (\lambda/\lambda_{3.2})} \\
 & \leq 2  \lambda^2_{3.2} \left(C_1 + \frac{C_2\E[|U_1|^2]}{2}\right) \frac{(\lambda/\lambda_{3.2})^2}{1- (\lambda/\lambda_{3.2})}.
\end{align*}

\noindent This completes the proof.
\end{proof}
\begin{PROP}{(Control of the Lipschitz modulus of iterative kernels)}
\label{Lipschitz_control_Euler}
Denote the Lipschitz modulus of $b$ and $\sigma$ appearing in the diffusion process \eqref{EDS} by $[b]_1$ and $[\sigma]_1$, respectively. Denote by $P_k$ and $P_{k,p}=P_{k} \circ \cdots \circ P_{p-1}$, $k, \ p \in \left\{0,\cdots, N-1\right\}$, $k \leq p$ the (Feller) transition kernel and the iterative kernels of the Markov chain $X^{\Delta}$ defined by the scheme \eqref{EULER}, respectively. Then for all real-valued Lipschitz function $f$ and for all $k, \ p \in \left\{0,\cdots, N-1\right\}$, $k\leq p$ the functions $P_{k}(f)$ are Lipschitz-continuous and one has
$$
[P_{k,p}(f)]_{1} : = \sup_{(x,x') \in (\R^{d})^2} \frac{\left|P_{k,p}(f)(x)-P_{k,p}(f)(x')\right|}{|x-x'|} \leq [f]_{1} (1+C(b,\sigma,\Delta) \Delta)^{\frac{p-k}{2}}
$$ 

\noindent where $[f]_{1}$ stands for the Lipschitz modulus of the function $f$ and $C(b,\sigma,\Delta)=2[b]_1 + [\sigma]^2_{1} + \Delta [b]^2_1$.
\end{PROP}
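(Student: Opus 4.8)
The plan is to first establish the one-step Lipschitz estimate $[P_k(f)]_1 \le [f]_1(1+C(b,\sigma,\Delta)\Delta)^{1/2}$ for every $k \in \{0,\dots,N-1\}$, and then to obtain the general bound by iteration, using the semigroup identity $P_{k,p}=P_k\circ P_{k+1,p}$ together with the elementary fact that the Lipschitz modulus of a composition is submultiplicative, i.e. $[g\circ \psi]_1 \le [g]_1\,[\psi]_1$ when $\psi:\R^d\to\R^d$.

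For the one-step bound, write $\Phi_k(x,u):=x+b(t_k,x)\Delta+\sigma(t_k,x)\Delta^{1/2}u$, so that $P_k(f)(x)=\E[f(\Phi_k(x,U))]$; under \A{HS} the coefficients have at most linear growth and $U\in\mathcal L^1(\mu)$, so this is well defined for any Lipschitz $f$. For $x,x'\in\R^d$,
$$
|P_k(f)(x)-P_k(f)(x')|\le [f]_1\,\E\big[\,|\Phi_k(x,U)-\Phi_k(x',U)|\,\big]\le [f]_1\,\E\big[\,|\Phi_k(x,U)-\Phi_k(x',U)|^2\,\big]^{1/2},
$$
the last inequality by Jensen. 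Expanding the square, using $\E[U]=0_q$ to cancel the cross term and $\E[UU^*]=I_q$ to evaluate the martingale-type term, one gets
$$
\E\big[|\Phi_k(x,U)-\Phi_k(x',U)|^2\big]=\big|(x-x')+(b(t_k,x)-b(t_k,x'))\Delta\big|^2+\Delta\,\big|\sigma(t_k,x)-\sigma(t_k,x')\big|^2.
$$
Bounding the first term by $(1+2[b]_1\Delta+[b]_1^2\Delta^2)|x-x'|^2$ (Cauchy--Schwarz on the inner product, plus the Lipschitz property of $b$) and the second by $\Delta[\sigma]_1^2|x-x'|^2$, one obtains $\E[|\Phi_k(x,U)-\Phi_k(x',U)|^2]\le(1+C(b,\sigma,\Delta)\Delta)|x-x'|^2$ with $C(b,\sigma,\Delta)=2[b]_1+[\sigma]_1^2+\Delta[b]_1^2$, which yields the claimed one-step estimate and in particular shows that $P_k(f)$ is Lipschitz-continuous.

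Finally I would conclude by induction on $p-k\ge 0$. The base case $p=k$ is trivial since $P_{k,k}=\mathrm{Id}$. For the inductive step, $P_{k+1,p}(f)$ is Lipschitz by the induction hypothesis, so applying the one-step bound to it gives
$$
[P_{k,p}(f)]_1=[P_k(P_{k+1,p}(f))]_1\le (1+C(b,\sigma,\Delta)\Delta)^{1/2}[P_{k+1,p}(f)]_1\le[f]_1(1+C(b,\sigma,\Delta)\Delta)^{(p-k)/2}.
$$
There is no genuine obstacle in this argument; the only points requiring care are the bookkeeping of the cross terms in the second-moment expansion, which vanish precisely because $\E[U]=0_q$, and the matrix-norm convention under which $[\sigma]_1$ is the Lipschitz modulus of $\sigma$, so that the identity $\E[UU^*]=I_q$ gives $\E[|(\sigma(t_k,x)-\sigma(t_k,x'))U|^2]=|\sigma(t_k,x)-\sigma(t_k,x')|^2\le [\sigma]_1^2|x-x'|^2$.
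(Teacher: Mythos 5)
Your proposal is correct and follows essentially the same route as the paper: a one-step $L^2$ bound via Jensen/Cauchy--Schwarz, with the cross term killed by $\E[U]=0_q$ and the quadratic term evaluated through $\E[UU^*]=I_q$, followed by a straightforward induction on the iterated kernels. The only difference is that you spell out the expansion and the induction that the paper leaves as "straightforward."
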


\begin{proof} Using the Cauchy Schwarz inequality and \A{HS}, for all $(x,y) \in (\R^{d})^2$ and for all $k \in \left\{0,\cdots, N-1\right\}$, one has
\begin{align*}
|P_k(f)(x) - P_k(f)(y)| & \leq [f]_1 \E\left[\left|f(x+b(t_k,x)\Delta + \sigma(t_k,x) U_1) - f(y+b(t_k,y)\Delta + \Delta^{\frac{1}{2}}\sigma(t_k,y) U_1)\right|\right] \\
& \leq [f]_1 \E\left[\left|x-y + (b(t_k,x)-b(t_k,y))\Delta + \Delta^{\frac{1}{2}}(\sigma(t_k,x)-\sigma(t_k,y))U_1 \right|^{2}\right]^{\frac{1}{2}} \\
& \leq [f]_1 (1+C(b,\sigma,\Delta)\Delta )^{\frac{1}{2}} |x-y|.
\end{align*}

\noindent A straightforward induction argument completes the proof.

\end{proof}
\begin{PROP}{(Control of the Laplace transform)}\label{CONT_LAPLACE}
Denote by $X^{\Delta}_{T}$ the value at time $T$ of the scheme \eqref{EULER} associated to the diffusion \eqref{EDS}. Assume that the innovations $(U_n)_{n\geq1}$ in \eqref{EULER} satisfy \eqref{CONC_GAUSS} for some $\beta>0$. Let $f$ be a real-valued $1$-Lipschitz-continuous function defined on $\R^{d}$. For all $\lambda \geq0$ and for all $\alpha\in(\frac{1}{2},1]$, one has 
$$
\E_x\left[\exp(\lambda f(X^{\Delta}_{T}))\right] \leq \exp(\lambda \E_x\left[f(X^{\Delta}_{T})\right]) \exp\left(K_{3.1}(\varphi(T,b,\sigma,\Delta) \vee\varphi(T,b,\sigma,\Delta)^{\frac{\alpha}{2\alpha-1}})(\lambda^2\vee\lambda^{\frac{2\alpha}{2\alpha-1}})\right),
$$

\noindent with $\varphi(T,b,\sigma,\Delta) : = C_{\sigma}\beta\frac{(1+C(\Delta)\Delta)}{4C(\Delta)}e^{3C(\Delta) T}$ and $C(\Delta) := 2[b]_1 + [\sigma]^2_{1} + \Delta [b]^2_1$. 



\noindent If $\alpha=\frac{1}{2}$, for all $\lambda \in [0,\varphi(T,b,\sigma,\Delta)^{-1/2} \lambda_{3.2})$, one has
$$
\E_x\left[\exp(\lambda f(X^{\Delta}_{T}))\right] \leq \exp(\lambda \E_x\left[f(X^{\Delta}_{T})\right]) \exp\left(K_{3.2} \frac{(\lambda\varphi(T,b,\sigma,\Delta)^{1/2} /\lambda_{3.2})^2}{1-(\lambda\varphi(T,b,\sigma,\Delta)^{1/2} / \lambda_{3.2})}\right).
$$

\end{PROP}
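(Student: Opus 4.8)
The plan is to estimate $\E_x[\exp(\lambda f(X^{\Delta}_{T}))] = P_{0,N}(\exp(\lambda f))(x)$ by peeling off the transition kernels $P_{N-1},P_{N-2},\dots,P_{0}$ one at a time from the inside, iterating the heuristic that led to \eqref{first_step_to_conc_Euler}. The elementary building block is the following one-step estimate, which follows from \eqref{CONC_GAUSS} together with the growth bound $\sup_{t}\mathrm{Tr}(a(t,x))\le C_{\sigma}V^{1-\alpha}(x)$ of \A{HD$_{\alpha}$}: for any $L$-Lipschitz function $g$ and any $\mu\ge 0$,
\[
P_k(\exp(\mu g))(x)\le \exp\Big(\mu P_k(g)(x)+\tfrac{\beta\mu^2 L^2\Delta C_{\sigma}}{4}V^{1-\alpha}(x)\Big),\qquad 0\le k\le N-1.
\]
By Proposition~\ref{Lipschitz_control_Euler} the iterated kernels satisfy $[P_{k,N}(f)]_1\le(1+C(\Delta)\Delta)^{(N-k)/2}$, so at the step where $P_{N-j}$ is peeled off one may apply the block to the current Lipschitz part $g=P_{N-j+1,N}(f)$.

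First I would run this recursion. The obstruction is that after one peeling the integrand is a \emph{product} $\exp(\mu P_{k,N}(f)(\cdot))\exp(cV^{1-\alpha}(\cdot))$ and $V^{1-\alpha}$ is not Lipschitz, so the block cannot be reapplied directly; I would split the product with H\"older's inequality, at the $j$-th peeling writing
\[
P_{0,N-j}\!\big(e^{\mu_{j-1}P_{N-j+1,N}(f)}e^{c_jV^{1-\alpha}}\big)(x)\le P_{0,N-j}\!\big(e^{p_j\mu_{j-1}P_{N-j,N}(f)}\big)(x)^{1/p_j}\,\E_x\!\big[e^{p_j'a_jV^{1-\alpha}(X^{\Delta}_{t_{N-j}})}\big]^{1/p_j'},
\]
with $1/p_j+1/p_j'=1$, and recursing on the first factor. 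Because the $1/p_j$ powers compound, the amplification of $\lambda$ in the Lipschitz part is exactly undone, so the leading term collapses to $\exp(\lambda P_{0,N}(f)(x))=\exp(\lambda\E_x[f(X^{\Delta}_{T})])$, while the remaining factors accumulate into $\prod_{j}\E_x[\exp(\theta_jV^{1-\alpha}(X^{\Delta}_{t_{N-j}}))]^{w_j}$ with $w_j=1/p_j'$. The crucial bookkeeping is to choose the weights so that $\sum_j w_j=1$ while $\prod_j p_j$ stays bounded uniformly in $N$; taking $w_j$ proportional to the $j$-dependent growth of $a_j$, which by Proposition~\ref{Lipschitz_control_Euler} carries a factor $(1+C(\Delta)\Delta)^{j}$, the resulting geometric sum telescopes into the constant $\varphi(T,b,\sigma,\Delta)$ and one gets $\max_j\theta_j\le\varphi(T,b,\sigma,\Delta)\lambda^2$.

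Putting these together and using $\sum_j w_j=1$ together with monotonicity of the logarithm,
\[
\log\E_x[\exp(\lambda f(X^{\Delta}_{T}))]\le\lambda\E_x[f(X^{\Delta}_{T})]+\sup_{0\le n\le N}\log\E_x\!\big[\exp\big(\varphi(T,b,\sigma,\Delta)\lambda^2 V^{1-\alpha}(X^{\Delta}_{t_n})\big)\big].
\]
It then remains to insert the Laplace-transform controls of $V^{1-\alpha}$ already established. For $\alpha\in(\tfrac12,1]$, Corollary~\ref{CONTLAPLACEBIS} bounds the supremum by $K_{3.1}\big((\varphi\lambda^2)\vee(\varphi\lambda^2)^{\alpha/(2\alpha-1)}\big)\le K_{3.1}\big(\varphi\vee\varphi^{\alpha/(2\alpha-1)}\big)\big(\lambda^2\vee\lambda^{2\alpha/(2\alpha-1)}\big)$, which is precisely the asserted bound. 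For $\alpha=\tfrac12$ one uses Corollary~\ref{CONTLAPLACETER} in place of Corollary~\ref{CONTLAPLACEBIS}; since that estimate holds only for arguments below $\lambda_{3.2}^{2}$, the condition $\varphi^{1/2}\lambda<\lambda_{3.2}$, i.e. $\lambda\in[0,\varphi^{-1/2}\lambda_{3.2})$, appears, and one obtains the stated fractional-linear bound involving $K_{3.2}$.

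The step I expect to be the main obstacle is the uniform-in-$N$ management of the H\"older exponents together with the compounding of the Lipschitz moduli $(1+C(\Delta)\Delta)^{(N-k)/2}$ through the recursion: one must simultaneously keep the amplification of $\lambda$ bounded and balance the diffusion coefficients $\theta_j$ so that their maximum is $O(\lambda^2)$ with precisely the constant $\varphi(T,b,\sigma,\Delta)$. Everything else reduces to Proposition~\ref{CONT_LAPLACEV}, Proposition~\ref{Lipschitz_control_Euler}, and Corollaries~\ref{CONTLAPLACEBIS}--\ref{CONTLAPLACETER}.
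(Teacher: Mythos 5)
Your plan is correct and follows essentially the same route as the paper's proof: the one-step estimate from \eqref{CONC_GAUSS} and \A{HD$_{\alpha}$}, a H\"older splitting at each peeling, Proposition \ref{Lipschitz_control_Euler} for the iterated Lipschitz moduli, the observation that the deterministic leading factor collapses to $\exp(\lambda\E_x[f(X^{\Delta}_T)])$, a supremum over $n$ with H\"older weights summing to at most one, and finally Corollaries \ref{CONTLAPLACEBIS} and \ref{CONTLAPLACETER} for the exponential moments of $V^{1-\alpha}$. The bookkeeping you single out as the main obstacle is handled in the paper simply by using the \emph{same} conjugate pair at every step, $p=1+C(\Delta)\Delta$ and $q=(1+C(\Delta)\Delta)/(C(\Delta)\Delta)$, so that $\Delta q$ is of order one, $p^{2N}(1+C(\Delta)\Delta)^{N}\le e^{3C(\Delta)T}$ and $\frac1q\sum_{k}p^{-k}\le 1$, which produces exactly the constant $\varphi(T,b,\sigma,\Delta)$.
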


\begin{proof} As mentionned earlier on in the introduction, we begin our proof using that the law $\mu$ of the innovation satisfies \eqref{CONC_GAUSS} and \A{HD$_{\alpha}$}. Hence, for $\lambda \geq0$ and $k \in \left\{0,\cdots,N-1\right\}$, one has 
\begin{align}
P_{k}(\exp(\lambda f))(x) & = \mathbb{E}\left[\exp\left(\lambda f\left(x+ b(t_{k},x)\Delta +\sigma(t_{k},x) \Delta^{1/2} U_{k+1} \right)\right)\right] \nonumber \\
 & \leq  \exp\left(\lambda P_{k}(f)(x) + \beta \frac{\lambda^2}{4}[f]^2_1 \Delta |\sigma(t_{k},x)|^2 \right) \nonumber\\
 & \leq  \exp\left(\lambda P_{k}(f)(x) + C_{\sigma} \beta \frac{\lambda^2}{4} [f]^2_1 \Delta V^{1-\alpha}(x) \right). \label{first_eq_dem}
\end{align}

Taking expectation from both sides of the last inequality and using the H\"{o}lder inequality with conjugate exponents $(p,q)$ (to be specified later on) leads to
\begin{equation}
\label{first_step_iteration}
\E_x\left[\exp(\lambda f(X^{\Delta}_{t_{k+1}}))\right] \leq \E_x\left[\exp(\lambda p P_k(f)(X^{\Delta}_{t_k}))\right]^{\frac{1}{p}} \E_x\left[\exp\left( \frac{qC_\sigma \beta }{4} \Delta \lambda^2 [f]^2_1 V^{1-\alpha}(X^{\Delta}_{t_k}) \right)\right]^{\frac{1}{q}}.
\end{equation}

\noindent Now, we apply the last inequality for $f:= P_{k+1,N}(f)$ and obtain
$$
\E_x\left[\exp(\lambda P_{k+1,N}(f)(X^{\Delta}_{t_{k+1}}))\right] \leq \E_x\left[\exp(\lambda p P_{k,N}(f)(X^{\Delta}_{t_k}))\right]^{\frac{1}{p}} \E_x\left[\exp\left( \frac{qC_\sigma \beta }{4} \Delta \lambda^2 [P_{k+1,N}(f)]^2_1 V^{1-\alpha}(X^{\Delta}_{t_k}) \right)\right]^{\frac{1}{q}}
$$

\noindent Consequently, an elementary induction yields
\begin{align*}
\E_x\left[\exp(\lambda f(X^{\Delta}_{T}))\right] & = \E_x\left[\exp(\lambda P_{N,N}(f)(X^{\Delta}_{t_{N}}))\right]  \\ 
& \leq \E_{x}\left[\exp(\lambda p^{N}P_{0,N}(f)(x))\right]^{\frac{1}{p^{N}}}\\
& \ \ \ \ \times \prod_{k=0}^{N-1} \left(\E_{x}\left[\exp\left(\frac{C_\sigma  \beta }{4} \lambda^2 q p^{2k}\Delta [P_{N-k,N}(f)]^2_1V^{1-\alpha}(X^{\Delta}_{t_{N-k-1}}) \right)\right]^{\frac{1}{q}}\right)^{\frac{1}{p^{k}}}  \\
& \leq \exp(\lambda \E_x\left[f(X^{\Delta}_{T})\right]) \exp\left( \sum_{k=0}^{N-1}\frac{1}{p^{k}} \frac{1}{q} \sup_{0\leq n \leq N}\log\left(\E_{x}\left[e^{ \frac{C_{\sigma}\beta}{4} \lambda^2 \Delta q p^{2N}(1+C(\Delta)\Delta)^{N}V^{1-\alpha}(X^{\Delta}_{t_{n}}) }\right]\right)\right)
\end{align*}

\noindent where we used Proposition \ref{Lipschitz_control_Euler} for the last inequality. Observe now that since $(p,q)$ are conjugate exponents, we have $\frac{1}{q}\sum_{k=0}^{N-1}\frac{1}{p^{k}} = \frac{1}{q} (1-\frac{1}{p^{N}})\frac{1}{1-\frac{1}{p}} \leq \frac{1}{q} \frac{p}{p-1} =1$, so that 
$$
\E_x\left[\exp(\lambda f(X^{\Delta}_{T}))\right] \leq \exp(\lambda \E_x\left[f(X^{\Delta}_{T})\right]) \exp\left(\sup_{0\leq n \leq N}\log\left(\E_{x}\left[e^{ \frac{C_{\sigma}\beta}{4} \lambda^2 \Delta q p^{2N}(1+C(\Delta)\Delta)^{N}V^{1-\alpha}(X^{\Delta}_{t_{n}}) }\right]\right)\right).
$$

\noindent Setting $p:=1+C(\Delta)\Delta$, $q= \frac{p}{p-1}=\frac{1+C(\Delta) \Delta}{C(\Delta) \Delta}$ and using the straightforward inequality $(1+C(\Delta)\Delta)^{3N}\leq \exp(3C(\Delta)T)$, we derive
$$
\E_x\left[\exp(\lambda f(X^{\Delta}_{T}))\right] \leq \exp(\lambda \E_x\left[f(X^{\Delta}_{T})\right]) \exp\left(\sup_{0\leq n \leq N}\log\left(\E_{x}\left[e^{ \frac{C_{\sigma}\beta(1+C(\Delta))}{4C(\Delta)}e^{3C(\Delta)T} \lambda^2 V^{1-\alpha}(X^{\Delta}_{t_{n}}) }\right]\right)\right).
$$

\noindent We set $\varphi(T,b,\sigma,\Delta) : = C_{\sigma}\beta\frac{(1+C(\Delta)\Delta)}{4C(\Delta)}e^{3C(\Delta) T}$. For $\alpha \in (\frac12,1]$, Corollary \ref{CONTLAPLACEBIS}  clearly implies
$$
\E_x\left[\exp(\lambda f(X^{\Delta}_{T}))\right] \leq \exp(\lambda \E_x\left[f(X^{\Delta}_{T})\right]) \exp\left(K_{3.1}(\varphi(T,b,\sigma,\Delta) \vee\varphi(T,b,\sigma,\Delta)^{\frac{\alpha}{2\alpha-1}})(\lambda^2 \vee \lambda^{\frac{2\alpha}{2\alpha-1}})\right)
$$ 


\noindent and for $\alpha=\frac12$, according to Proposition \ref{CONTLAPLACETER},  for $\lambda < \varphi(T,b,\sigma,\Delta)^{-1/2} \lambda_{3.2}$, one has
$$
\E_x\left[\exp(\lambda f(X^{\Delta}_{T}))\right] \leq \exp(\lambda \E_x\left[f(X^{\Delta}_{T})\right])\exp\left(K_{3.2} \frac{(\lambda\varphi(T,b,\sigma,\Delta)^{1/2} /\lambda_{3.2})^2}{1-(\lambda\varphi(T,b,\sigma,\Delta)^{1/2} / \lambda_{3.2})}\right).
$$

\end{proof}
\subsection{Proof of Theorem \ref{GEN_CONC_EULER}}

We will prove the result for the process $X$ solution of \eqref{EDS}. The proof for the continuous Euler scheme is similar.

\begin{LEMME}
\label{LEM_CONT_DIFF}
Under the assumptions of Theorem \ref{GEN_CONC_EULER}, for all $p\geq1$, one has
$$
\E_{x}[\sup_{0\leq t \leq T}|X_t|^{2p}] \leq (1+|x|)^{2p} \exp(26p^2(1+(C_b\vee C_{\sigma})T)).
$$
\end{LEMME}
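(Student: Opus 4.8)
The plan is to prove the moment bound by a standard stochastic-calculus argument: apply Itô's formula to $|X_t|^{2p}$, take suprema and expectations, dominate the resulting drift and martingale contributions using the linear-growth assumptions on $b$ and $a=\sigma\sigma^*$, and then close the estimate with Gronwall's lemma. I would actually find it cleaner to work with $(1+|X_t|^2)^p$ rather than $|X_t|^{2p}$, since this avoids any issue at the origin and produces the factor $(1+|x|)^{2p}$ directly; the two are comparable up to constants absorbed into the exponential.

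More precisely, first I would write, by Itô's formula applied to $\phi(y)=(1+|y|^2)^p$,
\begin{align*}
(1+|X_t|^2)^p &= (1+|x|^2)^p + 2p\int_0^t (1+|X_s|^2)^{p-1}\langle X_s, b(s,X_s)\rangle\,ds \\
&\quad + 2p\int_0^t (1+|X_s|^2)^{p-1}\langle X_s, \sigma(s,X_s)dW_s\rangle \\
&\quad + p\int_0^t (1+|X_s|^2)^{p-1}\mathrm{Tr}(a(s,X_s))\,ds \\
&\quad + 2p(p-1)\int_0^t (1+|X_s|^2)^{p-2}\langle X_s, a(s,X_s)X_s\rangle\,ds.
\end{align*}
Using $|\langle X_s,b(s,X_s)\rangle|\le |X_s|\,C_b^{1/2}(1+|X_s|)\le C_b^{1/2}(1+|X_s|^2)$, the bound $\mathrm{Tr}(a(s,X_s))\le C_\sigma(1+|X_s|^2)$, and $\langle X_s,a(s,X_s)X_s\rangle\le |X_s|^2\,\mathrm{Tr}(a(s,X_s))\le C_\sigma(1+|X_s|^2)^2$, every drift term is bounded by a constant times $p^2(1+(C_b\vee C_\sigma))(1+|X_s|^2)^p$. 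For the martingale term I would take the supremum over $t\in[0,T]$ and apply the Burkholder–Davis–Gundy inequality: its quadratic variation is controlled by $\int_0^t (1+|X_s|^2)^{2p-2}|\sigma(s,X_s)^*X_s|^2\,ds \le C_\sigma\int_0^t (1+|X_s|^2)^{2p-1}\,ds$, so BDG gives a bound of the form $C\,\E_x[(\sup_{s\le T}(1+|X_s|^2)^p)^{1/2}(\int_0^T(1+|X_s|^2)^p\,ds)^{1/2}]$, which by Young's inequality is absorbed as $\tfrac12\E_x[\sup_{s\le T}(1+|X_s|^2)^p] + C\int_0^T\E_x[(1+|X_s|^2)^p]\,ds$.

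Combining these estimates and absorbing the half-supremum into the left-hand side yields $\E_x[\sup_{s\le t}(1+|X_s|^2)^p]\le 2(1+|x|^2)^p + K p^2(1+(C_b\vee C_\sigma))\int_0^t \E_x[\sup_{u\le s}(1+|X_u|^2)^p]\,ds$ for a universal constant $K$, and Gronwall's lemma then gives $\E_x[\sup_{s\le T}(1+|X_s|^2)^p]\le 2(1+|x|^2)^p\exp(Kp^2(1+(C_b\vee C_\sigma)T))$; tracking constants and using $(1+|x|^2)^p\le(1+|x|)^{2p}$ one recovers the stated bound with the constant $26$. A minor technical point I would handle is the a priori finiteness of these moments (so that the BDG/Gronwall argument is legitimate): this is standard under the linear-growth hypothesis, e.g. by first localizing with stopping times $\tau_n=\inf\{t:|X_t|\ge n\}$, deriving the inequality for $X_{t\wedge\tau_n}$, and letting $n\to\infty$ by Fatou. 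The only real subtlety — which is entirely routine here — is bookkeeping the numerical constants carefully enough to land on the clean exponent $26p^2(1+(C_b\vee C_\sigma)T)$; nothing conceptually hard is involved.
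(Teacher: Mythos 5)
Your proposal is correct and follows essentially the same route as the paper: It\^o's formula applied to $(1+|x|^2)^p$ (the paper works with $V=g^{2p}$, $g(x)=\sqrt{1+|x|^2}$, i.e.\ the same function), the same drift bounds of order $p^2(C_b\vee C_\sigma)V$, localization by the stopping times $\tau_m$, Gronwall and Fatou. The only genuine difference is the control of the supremum of the stochastic integral: you use the $L^1$ Burkholder--Davis--Gundy inequality plus Young's inequality to absorb $\tfrac12\E_x[\sup_s(1+|X_s|^2)^p]$ into the left-hand side (legitimate after localization, as you note), whereas the paper argues in two passes: it first bounds $\sup_{t\le T}\E_x[g(X_t)^{2p}]$ without the supremum inside, and then controls $\E_x[\sup_t M^m_t]$ via Jensen and Doob's $L^2$ maximal inequality, which requires the first-pass bound at the doubled exponent ($g^{4p}$). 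Your route is one pass and needs only $2p$-th moments; the paper's route avoids BDG altogether and has fully explicit constants (Doob's constant $2$), which is what lets it land on the stated exponent. Two minor points: the hypothesis of Theorem \ref{GEN_CONC_EULER} is $|b(t,x)|\le C_b(1+|x|)$, not $C_b^{1/2}(1+|x|)$ (the square root belongs to \A{HD$_{\alpha}$}), though this only shifts constants; and since the universal constant in the $L^1$ BDG inequality is less explicit than Doob's, ``recovering $26$'' from your argument would need either a specific admissible BDG constant or the paper's Cauchy--Schwarz/Doob substitute --- harmless here, since only the form $\exp(Kp^2(1+(C_b\vee C_\sigma)T))$ is used downstream (the constant is anyway enlarged to $28$ in $\kappa(b,\sigma,T)$).
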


\begin{proof}
Let $g:x \mapsto \sqrt{1+|x|^2}$ satisfying for all $x\in \R^d$, $\nabla g(x)= g^{-1}(x) x$,  $\nabla^2 g(x)=g^{-1}(x)I_d - g^{-3}(x)xx^*$ and $V:x\mapsto g^{2p}(x)$. We apply It\^{o}'s formula to the process $V(X_t)$ with $\nabla V(x)=2p g(x)^{2p-1}\nabla g(x)$ and $\nabla^{2}V(x)=2p g(x)^{2p-1} \nabla^2 g(x) + 2p(2p-1)g(x)^{2p-2} \nabla g(x) \nabla g(x)^*$ noticing that for all $t \in [0,T]$
\begin{align*}
\nabla V(x).b(t,x) + \frac12 Tr(\sigma^*\nabla^2V \sigma)(t,x) & \leq 2p C_b g(x)^{2p-1}(1+|x|) + \frac12 C_{\sigma} (1+|x|^2) ||\nabla^2 V(x)||\\
 &\leq 4pC_b g(x)^{2p} + \frac12 C_{\sigma} (1+|x|^2) (4p g(x)^{2p-2}+ 2p(2p-1)g(x)^{2p-2})\\
 &\leq 4p(C_b\vee C_{\sigma}) g(x)^{2p} + 2p (C_b\vee C_{\sigma}) g(x)^{2p} +p(2p-1) (C_b\vee C_{\sigma}) g(x)^{2p} \\
 & \leq 8p^2 (C_b\vee C_{\sigma}) V(x)
\end{align*}

\noindent we clearly obtain,
\begin{equation}
\label{ineq_marting}
V(X^{\tau_{m}}_{t}) \leq  V(x) + 8p^2 (C_b\vee C_{\sigma}) \int_{0}^{t} V(X^{\tau_m}_{s})ds + \int_0^{t\wedge \tau_{m} } (\nabla V^*\sigma)(X^{\tau_m}_s) dW_s,
\end{equation}

\noindent where we classically introduced the stopping time $\tau_{m}:=\inf \left\{t\geq0: |X_t-x| \geq m \right\}$ for $m \in \N^{*}$ and the notation $X^{\tau_m}:=(X_{t \wedge \tau_m})_{t \geq0}$. The stochastic integral $
M^{m}_t := \int_0^{t\wedge \tau_{m} } (\nabla V^*\sigma)(X^{\tau_m}_s) dW_s$ defines a continuous martingale so that taking expectation in the previous inequality clearly yields 
$$
\E_x[V(X^{\tau_m}_{t})] \leq  V(x) + 8p^2 (C_b\vee C_{\sigma}) \int_{0}^t \E_{x}[V(X^{\tau_m}_{s})] ds.
$$

\noindent Now, using Gronwall's lemma we derive 
$$
\forall m \in \N^{*}, \ \ \sup_{t \in [0,T]}\E_x[V(X^{\tau_m}_{t})] \leq (1+|x|)^{2p} \exp(8p^2(C_b\vee C_\sigma) T)
$$

As $\tau_{m}\rightarrow +\infty$ $a.s.$, as $m\rightarrow +\infty$ (since $\sup_{s\in [0,t]} |X_s| < + \infty$) using Fatou's lemma, we finally obtain for all $p\geq1$
\begin{equation}
\label{first_bound}
\sup_{0\leq t \leq T}\E_x[V(X_t)] = \sup_{0\leq t \leq T}\E_x[g(X_t)^{2p}]   \leq  (1+|x|)^{2p} \exp(8p^2(C_b\vee C_\sigma) T).
\end{equation}

\noindent We then observe that It\^{o}'s formula also implies 
\begin{equation}
\label{final_estimate}
\E_x[\sup_{0\leq s\leq t}V(X^{\tau_m}_t)] \leq  V(x) + 8p^2 (C_b\vee C_{\sigma}) \int_{0}^t \E_x[\sup_{0\leq u\leq s}V(X^{\tau_m}_u)]  ds + \E_x[(M^{m}_t)^{*}]
\end{equation}

\noindent where $(M^{m}_t)^{*}:= \sup_{0\leq s \leq t} M^{m}_s$. Combining Jensen's and Doob's inequalities, one clearly gets
\begin{align*}
\E_x[(M^{m}_t)^{*}]^2 & \leq \E_x[((M^{m}_t)^{*})^2] \leq 4 \E_x[(M^{m}_t)^2]  \leq 16 p^2 C_{\sigma} \int_0^t \E_x[g(X^{\tau_m}_s)^{4p}] ds \\
& \leq 16p^2 C_{\sigma} T (1+|x|)^{4p}\exp(32p^2 (C_b\vee C_{\sigma})T)
\end{align*}

\noindent where we used $\forall x \in \R^d, \ (\nabla V^* \sigma)^2(x) \leq 4p^2 C_{\sigma} g(x)^{4p-2} (1+|x|^2)=4p^2 C_{\sigma} g(x)^{4p}$ and \eqref{first_bound} for the last inequality. Consequently, plugging the latter estimate into \eqref{final_estimate}, one has for all $t\in [0,T]$
\begin{align*}
\E_x[\sup_{0\leq s\leq t}V(X^{\tau_m}_t)] & \leq V(x) + 4p (C_\sigma T)^{\frac12} (1+|x|)^{2p}  \exp(16p^2(C_b\vee C_{\sigma})T)+ 8p^2 (C_b\vee C_{\sigma}) \int_{0}^t \E_x[\sup_{0\leq u\leq s}V(X^{\tau_m}_u)]  ds \\
& \leq (1+|x|)^{2p} (1+ 4p (C_\sigma T)^{\frac12}  \exp(16p^2(C_b\vee C_{\sigma})T) ) + 8p^2 (C_b\vee C_{\sigma}) \int_{0}^t \E_x[\sup_{0\leq u\leq s}V(X^{\tau_m}_u)]  ds
\end{align*}

\noindent so that using Gronwall's lemma yields and passing to the limit $m\rightarrow +\infty$, for all $p\geq 1$
$$
\E_{x}[\sup_{0\leq t \leq T}|X_t|^{2p}] \leq \E_x[\sup_{0\leq s\leq T}V(X_t)]  \leq 2 (1+|x|)^{2p} \exp(26p^2(1+(C_b\vee C_{\sigma})T)).
$$
\end{proof}

For all real-valued and $1$-Lipschitz function $f$ defined on $\mathcal{C}$ and for all $p\geq1$, one has
\begin{align}
\E_x[|f(X)-\E_x[f(X)]|^{2p}] & = \E_x[|f(X)-f(0)+ f(0)-\E_x[f(X)]|^{2p}] \leq 2^{2p}\E_{x}[||X||_{\infty}^{2p}] \nonumber \\
& \leq 2^{2p+1} (1+|x|)^{2p} \exp(26p^2(1+(C_b\vee C_{\sigma})T))  \label{CONT_LIP1} 
\end{align}

\noindent where we used Lemma \ref{LEM_CONT_DIFF} for the last inequality. Now, combining the Chebyshev and Rosenthal inequalities for independent zero-mean random variables (see e.g. \cite{Johnson1985}), for all $p\geq 1$, there exists $C_{2p}>0$ such that 
\begin{align*}
\P_{x}\left(\frac{1}{M}|\sum_{k=1}^{M} f(X^{k})-\E_{x}[f(X)]| \geq r \right) & \leq \frac{\E_{x}[(\sum_{k=1}^{M} f(X^{k})-\E_{x}[f(X)])^{2p}]}{r^{2p}M^{2p}}  \leq C_{2p} \frac{\E_{x}[|f(X)-\E_{x}[f(X)]|^{2p}]}{r^{2p}M^{p}} \\
& \leq 2 \frac{ (2(1+|x|))^{2p} \exp(28p^2(1+(C_b\vee C_{\sigma})T))}{r^{2p}M^{p}}:=2 \exp(-\varphi(p))
\end{align*}

\noindent with $\varphi(p):=-\kappa(b,\sigma,T)p^2+p\log(\frac{r^2M}{(2(1+|x|))^2})$ and where we used for all $p \geq1$, $C_{2p}\leq (2p)^{2p} \leq \exp(2p^2)$, see e.g. p.235-236 in \cite{Johnson1985}, and \eqref{CONT_LIP1} for the last inequality. Optimizing the latter inequality with respect to $p$ with $p\geq 1$, i.e. selecting $p=\frac{1}{2\kappa(b,\sigma,T)}\log(\frac{r^2M}{(2(1+|x|))^2})$, we obtain
$$
\P_{x}\left(\frac{1}{M}|\sum_{k=1}^{M} f(X^{k})-\E_{x}[f(X)]| \geq r \right) \leq 2 \exp\left(-\frac{1}{4 \kappa(b,\sigma,T)}\log\left(\frac{r^2M}{(2(1+|x|))^2}\right)^2\right)
$$

\noindent for $r^2 M\geq (2(1+|x|))^2 \exp(2\kappa(b,\sigma,T))$. Otherwise, using the Jensen and Rosenthal inequalities, one has for all $p \in [0,1]$
\begin{align*}
\E_x[(\sum_{k=1}^{M}f(X^{k})-\E_{x}[f(X)])^{2p}] & \leq \E_x[(\sum_{k=1}^{M}f(X^{k})-\E_{x}[f(X)])^2]^{p} \leq \left(M C_2 \E_x[|f(X)-\E_x[f(X)]|^{2}] \right)^{p} \\
& \leq M^{p} \left( 4 (2(1+|x|))^{2} \exp(\kappa(b,\sigma,T)) \right)^{p}
\end{align*}

\noindent where we used \eqref{CONT_LIP1} for the last inequality. Now, noticing that we have $4e \leq \exp(\kappa(b,\sigma,T))$, Chebyshev's inequality yields
\begin{align*}
\P_{x}\left(\frac{1}{M}|\sum_{k=1}^{M} f(X^{k})-\E_{x}[f(X)]| \geq r \right) & \leq \frac{C^{p}}{r^{2p}M^p}\leq 2 \frac{(Cp)^p}{r^{2p}M^{p}} \leq 2 \exp(-\varphi(p))
\end{align*}

\noindent with $\varphi(p):=-p\log(p)+p\log(\frac{r^2M}{C})$, $C:=(2 (1+|x|))^{2}\exp(2\kappa(b,\sigma,T)-1)$ and where we used that for all $p\geq0$, $C^{p}\leq 2(Cp)^{p}$ since the function $p \mapsto 2 p^p$ is minimized for $p=\exp(-1)$ and $2\exp(-1/e))>1$. Consequently, optimizing over $p$ such that $p\leq 1$, i.e. selecting $p=\frac{r^2M}{C e}$, one has
$$
\P_{x}\left(\frac{1}{M}|\sum_{k=1}^{M} f(X^{k})-\E_{x}[f(X)]| \geq r \right) \leq 2\exp\left(-\frac{r^2M}{(2 (1+|x|))^{2}\exp(2\kappa(b,\sigma,T))}\right)
$$ 

\noindent for $r^2M\leq C e=(2 (1+|x|))^{2}\exp(2\kappa(b,\sigma,T))$. This completes the proof.
\mysection{Stochastic Approximation Algorithm: Proof of the main Results}
\label{PROOF_RM}

Throughout this section we will assume that \A{HL}, \A{HLS}$_\alpha$ and \A{HUA} are in force.

\subsection{Proof of Theorem \ref{THM_TE_SA}}

The proof of Theorem \ref{THM_TE_SA} is divided into several propositions.

\begin{PROP}
\label{CONT_LAPLACEL}
Denote by $\theta:=(\theta_{n})_{0\leq n \leq N}$ the scheme \eqref{RM} with step sequence $\gamma=(\gamma_n)_{0\leq n \leq N}$ satisfying \eqref{STEP}. Assume that the innovations $(U_i)_{i\geq1}$ of \eqref{RM} satisfy \eqref{CONC_GAUSS} for some $\beta>0$. Then, there exists $\varepsilon_{\beta} >0$ which only depends on the law $\mu$ such that for all $\lambda <  \min(1, \varepsilon_{\beta} (8\eta \alpha C_{\alpha}^2\Pi_{2,N})^{-1})$, one has
$$
\sup_{0 \leq n \leq N}\log\left(\E_{\theta_0}\left[\exp(\lambda L^{\alpha}(\theta_n))\right]\right) \leq  (L^{\alpha}(\theta_0) + \underline{C}  \sum_{k=0}^{N-1} \gamma^2_{k+1} ) \Pi_{2,N}  \lambda + \left(\frac{1}{2} \sum_{k=0}^{N-1} \gamma^2_{k+1}\right) \log\left(\E\left[\exp\left(8 \eta \alpha C_{\alpha}^2\Pi_{2,N}  \lambda  |U|^2\right)\right]\right). 
$$

\noindent with $\Pi_{2,N}=\Pi_{2,N}(\alpha):=\prod_{k=0}^{N-1}(1+ (2 \eta \alpha C_h + \frac{\beta}{2}  \alpha^2 C_{\alpha}^2 )\gamma_{k+1}^2)$ and $\underline{C} =  4 \eta \alpha C^2_\alpha \E[|U|^2]$.
\end{PROP}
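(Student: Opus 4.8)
The plan is to mirror the proof of Proposition \ref{CONT_LAPLACEV}: first establish a one-step exponential recursion for the sequence $\big(L^\alpha(\theta_n)\big)_n$ along the chain, then iterate it after a suitable renormalization. For the one-step estimate, write $\theta_{k+1}-\theta_k=-\gamma_{k+1}H(\theta_k,U_{k+1})$; concavity of $x\mapsto x^\alpha$ on $\R_+$ gives $L^\alpha(\theta_{k+1})-L^\alpha(\theta_k)\le\alpha L^{\alpha-1}(\theta_k)\big(L(\theta_{k+1})-L(\theta_k)\big)$, and a second-order Taylor expansion of $L$ with $2\eta=\sup_x\|\nabla^2L(x)\|$ yields $L(\theta_{k+1})-L(\theta_k)\le-\gamma_{k+1}\nabla L(\theta_k)\cdot H(\theta_k,U_{k+1})+\eta\gamma_{k+1}^2|H(\theta_k,U_{k+1})|^2$. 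I would then split $H(\theta_k,U_{k+1})=h(\theta_k)+\big(H(\theta_k,U_{k+1})-h(\theta_k)\big)$, discard the drift part of the linear term thanks to $\langle h(\theta_k),\nabla L(\theta_k)\rangle\ge0$ from \A{HLS}$_\alpha$, and control the quadratic term: writing $h(\theta_k)=\E_{\tilde U}[H(\theta_k,\tilde U)]$, the Lipschitz-in-$u$ bound $|H(\theta,u)-H(\theta,u')|\le C_\alpha L^{\frac{1-\alpha}{2}}(\theta)|u-u'|$ and $|h(\theta)|^2\le C_hL(\theta)$ give $|H(\theta_k,U_{k+1})|^2\le 4C_\alpha^2L^{1-\alpha}(\theta_k)\big(|U_{k+1}|^2+\E[|U|^2]\big)+2C_hL(\theta_k)$; multiplying by $\alpha\eta\gamma_{k+1}^2L^{\alpha-1}(\theta_k)$ splits this contribution into $2\eta\alpha C_h\gamma_{k+1}^2L^\alpha(\theta_k)$, $4\eta\alpha C_\alpha^2\gamma_{k+1}^2|U_{k+1}|^2$, and the deterministic remainder $\underline C\gamma_{k+1}^2$ with $\underline C=4\eta\alpha C_\alpha^2\E[|U|^2]$. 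What is left is the increment $\Xi_{k+1}:=-\alpha\gamma_{k+1}L^{\alpha-1}(\theta_k)\nabla L(\theta_k)\cdot H(\theta_k,U_{k+1})$, which conditionally on $\mathcal F_k$ is the push-forward of $\mu$ under a function of $u$ with $\mu$-mean $\le0$ (again by $\langle h,\nabla L\rangle\ge0$) and Lipschitz modulus bounded, using $|\nabla L|^2\le C_LL$, by a constant multiple of $\gamma_{k+1}L^{\alpha/2}(\theta_k)$.

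Next I would take the conditional expectation given $\mathcal F_k$. The $L^\alpha(\theta_k)$-part being $\mathcal F_k$-measurable, I split the remaining exponential factor by the Cauchy--Schwarz inequality into the $\Xi_{k+1}$ contribution, controlled by \eqref{CONC_GAUSS} (its nonpositive mean is discarded, and the $\lambda^2L^\alpha(\theta_k)$ produced by the Gaussian bound is replaced by $\lambda L^\alpha(\theta_k)$ using $\lambda<1$), and the contribution of $\gamma_{k+1}^2|U_{k+1}|^2$, which stays as a Laplace transform of $|U|^2$. This produces
\begin{equation*}
\E\big[e^{\lambda L^\alpha(\theta_{k+1})}\,\big|\,\mathcal F_k\big]\le e^{\lambda(1+c_0\gamma_{k+1}^2)L^\alpha(\theta_k)+\lambda\underline C\gamma_{k+1}^2}\,\E\big[e^{8\eta\alpha C_\alpha^2\lambda\gamma_{k+1}^2|U|^2}\big]^{1/2},\qquad c_0:=2\eta\alpha C_h+\tfrac\beta2\alpha^2C_\alpha^2,
\end{equation*}
and $1+c_0\gamma_{k+1}^2$ is precisely the $k$-th factor of $\Pi_{2,N}$.

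Finally I would iterate. Put $\xi_k:=\lambda\,\Pi_{2,N}/\Pi_{2,k}$ for $0\le k\le N$, so that $\xi_N=\lambda$, $\xi_0=\lambda\Pi_{2,N}$ and $\xi_k=(1+c_0\gamma_{k+1}^2)\xi_{k+1}$: applying the one-step bound with parameter $\xi_{k+1}$ turns the factor $(1+c_0\gamma_{k+1}^2)$ into $\xi_k$ in the exponent, so that taking full expectations and using the tower property repeatedly gives
\begin{equation*}
\E\big[e^{\lambda L^\alpha(\theta_N)}\big]\le e^{\xi_0L^\alpha(\theta_0)}\prod_{k=0}^{N-1}e^{\xi_{k+1}\underline C\gamma_{k+1}^2}\,\E\big[e^{8\eta\alpha C_\alpha^2\xi_{k+1}\gamma_{k+1}^2|U|^2}\big]^{1/2}.
\end{equation*}
Since $\Pi_{2,k}\ge1$ one has $\xi_{k+1}\le\lambda\Pi_{2,N}$; using moreover $\gamma_n\le1$ (valid up to finitely many terms since $\sum\gamma_n^2<\infty$) together with the concavity of $x\mapsto x^{\gamma_{k+1}^2}$, each factor $\E[e^{8\eta\alpha C_\alpha^2\xi_{k+1}\gamma_{k+1}^2|U|^2}]$ is at most $\E[e^{8\eta\alpha C_\alpha^2\lambda\Pi_{2,N}|U|^2}]^{\gamma_{k+1}^2}$; taking logarithms yields the announced inequality, the admissible range $\lambda<\min(1,\varepsilon_\beta(8\eta\alpha C_\alpha^2\Pi_{2,N})^{-1})$ being forced by $\lambda<1$ and by the finiteness of $\E[\exp(\varepsilon_\beta|U|^2)]$ that \eqref{CONC_GAUSS} guarantees.

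The main obstacle is the bookkeeping in the one-step estimate: extracting from $|H(\theta_k,U_{k+1})|^2$ exactly the three pieces (a multiple of $L^\alpha(\theta_k)$, a multiple of $|U_{k+1}|^2$, and a constant) out of the sole domination assumptions of \A{HLS}$_\alpha$, and handling the increment $\Xi_{k+1}$ correctly — here, unlike the Euler case where $\E[U]=0$ forces it to be centred, its $\mu$-mean is nonzero and must be absorbed through $\langle h,\nabla L\rangle\ge0$. One must also keep the linearization $\lambda^2\le\lambda$ and the renormalization by $\Pi_{2,k}$ mutually consistent throughout.
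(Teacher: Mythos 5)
Your proposal is correct and follows essentially the same route as the paper's proof: the identical one-step estimate (concavity of $x\mapsto x^{\alpha}$, second-order Taylor expansion of $L$, \A{HLS}$_\alpha$ to extract the $L^{\alpha}(\theta_k)$, $|U_{k+1}|^2$ and constant pieces, then Cauchy--Schwarz plus \eqref{CONC_GAUSS} with the linearization $\lambda^2\le\lambda$), followed by the same renormalized iteration — your weights $\xi_k=\lambda\,\Pi_{2,N}/\Pi_{2,k}$ are exactly the paper's normalization $L_k=L^{\alpha}(\theta_k)/\Pi_{2,k}$ read in the other direction — and the same Jensen step (using $\gamma_{k+1}\le 1$) to reach the stated bound. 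The only cosmetic difference is that you keep the full linear increment with nonpositive conditional mean instead of centering by $h(\theta_k)$ as the paper does, which changes nothing in the argument.
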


\begin{proof} The proof relies on similar arguments as those used in the proof of Proposition \ref{CONT_LAPLACEV}. Using the concavity of $x\mapsto x^{\alpha}$, $\alpha \in (0,1]$, a Taylor expansion of order 2 of the function $L$, and finally \A{HLS}$_\alpha$, for all $k \in \left\{0,\cdots, N-1\right\}$, we have  
\begin{align*}
L^{\alpha}(\theta_{k+1}) - L^{\alpha}(\theta_{k}) & \leq \alpha L^{\alpha-1}(\theta_{k})\left(\nabla L(\theta_{k}).(\theta_{k+1}-\theta_{k})+\eta |\theta_{k+1}-\theta_{k}|^2\right), \\
 & = -\gamma_{k+1}\alpha L^{\alpha-1}(\theta_{k}) \left\langle \nabla L(\theta_{k}), h(\theta_k)\right\rangle -\gamma_{k+1} \alpha L^{\alpha-1}(\theta_{k}) \left\langle \nabla L(\theta_{k}), (H(\theta_k,U_{k+1})-h(\theta_k)\right\rangle \\
 & \ \ \ \ + \alpha \eta \gamma^2_{k+1} L^{\alpha-1}(\theta_{k}) |H(\theta_k,U_{k+1})|^2, \\
 & \leq -\gamma_{k+1} \alpha L^{\alpha-1}(\theta_{k}) \left\langle \nabla L(\theta_{k}), H(\theta_k,U_{k+1})-h(\theta_k)\right\rangle + 2 \eta \alpha \gamma^2_{k+1} L^{\alpha-1}(\theta_{k}) |H(\theta_k,U_{k+1})-h(\theta_k)|^2 \\
 & \ \ \ + 2 \eta \alpha \gamma^2_{k+1} L^{\alpha-1}(\theta_{k})|h(\theta_k)|^2.
\end{align*}

\noindent Let us note that \A{HLS}$_\alpha$ implies that $\forall (\theta,u) \in \R^{d}\times\R^{q}$, $|H(\theta,u)-h(\theta)|^2 = |H(\theta,u)-\E[H(\theta,U)]|^2 \leq 2 C^2_{\alpha} L^{1-\alpha}(\theta)(\E[|U|^2]+|u|^2)$ which leads to
\begin{align*}
L^{\alpha}(\theta_{k+1}) - L^{\alpha}(\theta_{k}) & \leq -\gamma_{k+1} \alpha L^{\alpha-1}(\theta_{k}) \left\langle \nabla L(\theta_{k}), H(\theta_k,U_{k+1})-h(\theta_k)\right\rangle + 4 \eta \alpha C_{\alpha}^2\gamma_{k+1}^2 \E[|U|^2] + 4 \eta \alpha C_{\alpha}^2 \gamma_{k+1}^2 |U_{k+1}|^2 \\
& \ \ \ + 2 \eta \alpha  C_{h} \gamma^2_{k+1}  L^{\alpha}(\theta_k).
\end{align*}

\noindent Using again \A{HLS}$_\alpha$, $\forall \theta \in \R^{d}$ the functions $g(\theta,.):u\mapsto \frac{\left\langle \nabla L(\theta), H(\theta,u)-h(\theta)\right\rangle}{L^{1-\alpha}(\theta)}$ are Lipschitz and more precisely satisfy
$$
\forall \theta \in \R^d, \ \ \ \sup_{(u, u^{'}) \in (\R^{q})^{2}} \frac{|g(\theta,u)-g(\theta,u^{'})|}{|u-u^{'}|} \leq C_{\alpha} L^{\frac{\alpha}{2}}(\theta).
$$

Consequently, denoting $\underline{C} =  4 \eta \alpha C^2_\alpha \E[|U|^2]$, from the Cauchy-Schwarz inequality and since the law of the innovation satisfies \eqref{CONC_GAUSS} for some $\beta>0$, there exists $\epsilon_{\beta} >0$ such that for $\lambda < \min(1, \varepsilon_{\beta} (8\eta \alpha C_{\alpha}^2 \gamma_{1}^2)^{-1})$, one has
\begin{align*}
\E\left[\left. \exp(\lambda L^{\alpha}(\theta_{k+1})\right| \mathcal{F}_{k}\right] & \leq \exp(\lambda (1+2\eta \alpha C_h \gamma^2_{k+1} ) L^{\alpha}(\theta_k)) \exp(\underline{C} \gamma^2_{k+1} \lambda) \E\left[\left. \exp(-2\alpha\lambda \gamma_{k+1}g(\theta_k, U_{k+1}) )\right| \mathcal{F}_{k}\right]^{\frac{1}{2}}  \\
& \ \ \ \times \E\left[ \left. \exp(8 \eta \alpha \lambda C_{\alpha}^2 \gamma_{k+1}^2 |U_{k+1}|^2 )\right| \mathcal{F}_{k}\right]^{\frac{1}{2}} \\
& \leq \exp(\lambda (1+ (2 \eta \alpha C_h + \frac{\beta}{2} C^2_\alpha \alpha) \gamma_{k+1}^2) L^{\alpha}(\theta_k)) \exp(\underline{C} \gamma^2_{k+1} \lambda)  \E\left[ \exp( 8 \eta \alpha \lambda C_{\alpha}^2 \gamma_{k+1}^2 |U|^2 )\right]^{\frac{1}{2}}
\end{align*}

\noindent In the aim of simplifying notations, we define $\Pi_{2,n}:=\prod_{k=0}^{n-1}(1+(2 \eta \alpha C_h + \frac{\beta}{2} C^2_\alpha \alpha) \gamma_{k+1}^2)$ and temporarily set $L_{k}:=\frac{L^{\alpha}(\theta_k)}{\Pi_{2,k}}$, for $k \in \left\{0, \cdots, N\right\}$. Taking expectation in both sides of the previous inequality clearly implies
$$
\E_{\theta_0}\left[\exp(\lambda L_{k+1})\right] \leq \E_{\theta_0}\left[\exp(\lambda L_{k})\right] \exp\left(\underline{C} \frac{\gamma^2_{k+1}}{\Pi_{2,k+1}} \lambda\right) \E\left[\exp\left(8 \eta \alpha C_{\alpha}^2 \frac{\gamma_{k+1}^2}{\Pi_{2,k+1}} \lambda |U|^2\right)\right]^{\frac{1}{2}}
$$

\noindent and by a straightforward induction, for $n \in \left\{0, \cdots, N\right\}$ we have
$$
\E_{\theta_0}\left[\exp(\lambda L_{n})\right] \leq \exp(\lambda L_{0}) \exp\left( \underline{C} \sum_{k=0}^{n-1} \frac{\gamma^2_{k+1}}{\Pi_{2,k+1}} \lambda\right) \prod_{k=0}^{n-1} \E\left[\exp\left(8 \eta \alpha C_{\alpha}^2 \frac{\gamma_{k+1}^2}{\Pi_{2,k+1}} \lambda |U|^2\right)\right]^{\frac{1}{2}},
$$

\noindent which finally yields for $\lambda < \min(1, \varepsilon_{\beta} (8\eta \alpha C_{\alpha}^2 \gamma_{1}^2)^{-1})$
$$
\E_{\theta_0}\left[\exp(\lambda L^{\alpha}(\theta_n))\right] \leq \exp(\Pi_{2,n} L^{\alpha}(\theta_0)\lambda) \exp\left( \underline{C} \sum_{k=0}^{n-1} \frac{\Pi_{2,n}}{\Pi_{2,k+1}}\gamma^2_{k+1} \lambda\right) \prod_{k=0}^{n-1} \E\left[\exp\left(8 \eta \alpha C_{\alpha}^2  \frac{\Pi_{2,n}}{\Pi_{2,k+1}}\gamma_{k+1}^2  \lambda |U|^2\right)\right]^{\frac{1}{2}}.
$$

Up to a modification of a constant, we can assume without loss of generality that $\sup_{0 \leq n\leq N}\gamma_{n+1} = \gamma_1 \leq 1$ so that using the Jensen's inequality, the latter bound clairly provides the following control of the quantity of interest for $\lambda <  \min(1, \varepsilon_{\beta} (8 \eta \alpha C_{\alpha}^2\Pi_{2,N})^{-1})$
$$
\sup_{0 \leq n \leq N}\log\left(\E_{\theta_0}\left[e^{\lambda L^{\alpha}(\theta_n)}\right]\right) \leq  \left(L^{\alpha}(\theta_0) + \underline{C} \sum_{k=0}^{N-1}\gamma^2_{k+1}\right)\Pi_{2,N} \lambda + \left(\frac{1}{2} \sum_{k=0}^{N-1} \gamma^2_{k+1}\right) \log\left(\E\left[e^{8 \eta \alpha C_{\alpha}^2 \Pi_{2,N} \lambda |U|^2}\right]\right). 
$$

\end{proof}
\begin{COROL}\label{CONTLAPLACELBIS}
Assume that the assumptions of Proposition \ref{CONT_LAPLACEL} are satisfied. Then, for all $ \alpha \in (\frac12,1]$, one has
$$
\forall \lambda \geq 0, \ \ \sup_{0\leq n\leq N} \log\left(\E_{\theta_0}\left[\exp(\lambda L^{1-\alpha}(\theta_n))\right]\right) \leq K_{4.1} (\lambda \vee \lambda^{\frac{\alpha}{2\alpha-1}})
$$

\noindent where $K_{4.1}:=\max(\Psi_1(\gamma,\alpha,\theta_0,H),\Psi_2(\gamma,\alpha,\theta_0,H))$ and 
\begin{align*}
\Psi_1(\gamma,\alpha,\theta_0,H) & = \left(L^{1-\alpha}(\theta_0) + (8 \eta \alpha C^2_{\alpha}\E[|U|^2] \sum_{k=0}^{N-1} \gamma^2_{k+1})^{\frac{1-\alpha}{\alpha}} \right) \prod_{k=0}^{N-1} (1+2\eta (1-\alpha) C_h \gamma^2_{k+1}) +  \exp\left(\frac{2\alpha-1}{\alpha}\overline{\rho}^{-\frac{1-\alpha}{2\alpha-1}}\right)\\\ 
& \times \exp\left(\left(L^{\alpha}(\theta_0) + 2\alpha \underline{C} \sum_{k=0}^{N-1}\gamma^2_{k+1}\right)\Pi_{2,N} \overline{\rho}\frac{1-\alpha}{\alpha} + \left(\frac{1}{2} \sum_{k=0}^{N-1} \gamma^2_{k+1}\right) \log\left(\E\left[e^{\frac{\varepsilon_{\beta}(1-\alpha)}{2\alpha} |U|^2}\right]\right) \right) \\
\Psi_2(\gamma,\alpha,\theta_0,H) & = \frac{2\alpha-1}{\alpha}\overline{\rho}^{-\frac{1-\alpha}{2\alpha-1}} + \left(L^{\alpha}(\theta_0) + \underline{C} \sum_{k=0}^{N-1}\gamma^2_{k+1}\right)\Pi_{2,N} \overline{\rho}\frac{1-\alpha}{\alpha} + \left(\frac{1}{2} \sum_{k=0}^{N-1} \gamma^2_{k+1}\right) \log\left(\E\left[e^{\frac{\varepsilon_{\beta}(1-\alpha)}{2\alpha} |U|^2}\right]\right) \\
\overline{\rho} & = \frac12 \min(1, \varepsilon_{\beta} (8 \eta \alpha C_{\alpha}^2\Pi_{2,N})^{-1})
\end{align*}
\end{COROL}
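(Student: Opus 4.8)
\medskip
\noindent\textbf{Proof strategy.} The plan is to transcribe the architecture of the proof of Corollary~\ref{CONTLAPLACEBIS}, letting Proposition~\ref{CONT_LAPLACEL} play the role that Proposition~\ref{CONT_LAPLACEV} played there, and splitting the argument according to whether $\lambda\in[0,1]$ or $\lambda>1$.

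For $\lambda\in[0,1]$ I would, exactly as in Corollary~\ref{CONTLAPLACEBIS}, expand $\xi\mapsto e^{\lambda\xi}$ in power series to reach
$$
\E_{\theta_0}[\exp(\lambda L^{1-\alpha}(\theta_n))]\le \exp\big(\lambda(\E_{\theta_0}[L^{1-\alpha}(\theta_n)]+\E_{\theta_0}[e^{L^{1-\alpha}(\theta_n)}])\big),
$$
so the task reduces to bounding, uniformly in $n\le N$, the first moment $\E_{\theta_0}[L^{1-\alpha}(\theta_n)]$ and the exponential moment $\E_{\theta_0}[e^{L^{1-\alpha}(\theta_n)}]$. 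For the first moment I would reuse the pointwise recursion for $L^{\alpha}(\theta_{k+1})-L^{\alpha}(\theta_k)$ already established inside the proof of Proposition~\ref{CONT_LAPLACEL}: taking conditional expectation given $\mathcal{F}_k$ annihilates the martingale increment (since $\E[H(\theta_k,U_{k+1})-h(\theta_k)\mid\mathcal{F}_k]=0$) and leaves the affine recursion $\E_{\theta_0}[L^{\alpha}(\theta_{k+1})]\le(1+2\eta\alpha C_h\gamma_{k+1}^2)\E_{\theta_0}[L^{\alpha}(\theta_k)]+8\eta\alpha C_\alpha^2\E[|U|^2]\gamma_{k+1}^2$. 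Iterating, then applying Jensen's inequality with exponent $\tfrac{1-\alpha}{\alpha}\le 1$ (licit because $\alpha\in(\tfrac12,1]$), together with $(1+x)^{(1-\alpha)/\alpha}\le 1+\tfrac{1-\alpha}{\alpha}x$ and the subadditivity $(a+b)^{(1-\alpha)/\alpha}\le a^{(1-\alpha)/\alpha}+b^{(1-\alpha)/\alpha}$, produces exactly the first contribution to $\Psi_1(\gamma,\alpha,\theta_0,H)$. For the exponential moment I would apply the Young inequality $L^{1-\alpha}\le\tfrac{1-\alpha}{\alpha}\rho L^{\alpha}+\tfrac{2\alpha-1}{\alpha}\rho^{-(1-\alpha)/(2\alpha-1)}$ (valid for $\alpha\in(\tfrac12,1]$ and any $\rho>0$) and then Proposition~\ref{CONT_LAPLACEL} with parameter $\tfrac{1-\alpha}{\alpha}\rho$; the choice $\rho=\overline\rho=\tfrac12\min(1,\varepsilon_\beta(8\eta\alpha C_\alpha^2\Pi_{2,N})^{-1})$ keeps $\tfrac{1-\alpha}{\alpha}\rho$ inside the admissibility window of Proposition~\ref{CONT_LAPLACEL} (since $\tfrac{1-\alpha}{\alpha}<1$), giving the second contribution to $\Psi_1$.

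For $\lambda>1$ I would instead use the rescaled Young inequality $\lambda L^{1-\alpha}\le\tfrac{2\alpha-1}{\alpha}\rho^{-(1-\alpha)/(2\alpha-1)}\lambda^{\alpha/(2\alpha-1)}+\tfrac{1-\alpha}{\alpha}\rho L^{\alpha}$, take expectations, bound $\E_{\theta_0}[\exp(\tfrac{1-\alpha}{\alpha}\rho L^{\alpha}(\theta_n))]$ by Proposition~\ref{CONT_LAPLACEL} with $\rho=\overline\rho$, and thereby obtain $\sup_{n\le N}\log\E_{\theta_0}[\exp(\lambda L^{1-\alpha}(\theta_n))]\le\Psi_2(\gamma,\alpha,\theta_0,H)\,\lambda^{\alpha/(2\alpha-1)}$. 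Combining the two regimes then finishes the proof: on $[0,1]$ the bound is affine in $\lambda$, hence dominated by $\Psi_1(\lambda\vee\lambda^{\alpha/(2\alpha-1)})$; on $(1,\infty)$, since $\tfrac{\alpha}{2\alpha-1}\ge1$ for $\alpha\in(\tfrac12,1]$ one has $\lambda^{\alpha/(2\alpha-1)}\ge\lambda>1$, so the $\Psi_2$ bound is dominated by $\Psi_2\,\lambda^{\alpha/(2\alpha-1)}$, and one takes $K_{4.1}=\max(\Psi_1,\Psi_2)$.

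The only genuinely delicate parts are the ``tedious but simple'' bookkeeping needed to cast the first-moment estimate in the exact form announced in $\Psi_1$, and, more importantly, the systematic verification that every argument fed into Proposition~\ref{CONT_LAPLACEL} stays within its admissibility range $\lambda<\min(1,\varepsilon_\beta(8\eta\alpha C_\alpha^2\Pi_{2,N})^{-1})$ --- this is precisely why $\overline\rho$ carries the factor $\tfrac12$ and why the $\Pi_{2,N}$-dependence must be tracked throughout. Everything else is a line-by-line transcription of the Euler-scheme computation.
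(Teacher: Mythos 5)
Your proposal is correct and follows essentially the same route as the paper's (sketched) proof: the split at $\lambda=1$, the power-series bound $\E_{\theta_0}[e^{\lambda L^{1-\alpha}(\theta_n)}]\le \exp\bigl(\lambda(\E_{\theta_0}[L^{1-\alpha}(\theta_n)]+\E_{\theta_0}[e^{L^{1-\alpha}(\theta_n)}])\bigr)$, the Young inequality with the choice $\rho=\overline{\rho}$ fed into Proposition \ref{CONT_LAPLACEL}, and the rescaled Young inequality for $\lambda>1$ giving $\Psi_2\lambda^{\alpha/(2\alpha-1)}$. The only difference is that you spell out the ``tedious but simple'' first-moment step (conditional expectation killing the martingale increment, iteration of the affine recursion, Jensen with exponent $\tfrac{1-\alpha}{\alpha}$, and the inequalities $(1+x)^{(1-\alpha)/\alpha}\le 1+\tfrac{1-\alpha}{\alpha}x$ and subadditivity) that the paper leaves implicit, and your bookkeeping indeed reproduces the announced form of $\Psi_1$.
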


\begin{proof} We only give a sketch of proof since it is rather similar to the one of Corollary \ref{CONTLAPLACEBIS}. For $\lambda \in [0,1]$, one has
$$
\E_{\theta_0}[\exp\left( \lambda L^{1-\alpha}(\theta_n)\right)] \leq \exp\left(\lambda(\E_{\theta_0}[L^{1-\alpha}(\theta_n)] + \E_{\theta_0}[\exp(L^{1-\alpha}(\theta_n))] \right).
$$

Tedious but simple computations in the spirit of Proposition \ref{CONT_LAPLACEL} easily show that 
$$
\sup_{0\leq n\leq N} \E_{\theta_0}[L^{1-\alpha}(\theta_n)]  \leq \sup_{0\leq n\leq N} \E_{\theta_0}[L^{\alpha}(\theta_n)]^{\frac{1-\alpha}{\alpha}} \leq \left(L^{1-\alpha}(\theta_0) + (8 \eta \alpha C^2_{\alpha}\E[|U|^2] \sum_{k=0}^{N-1} \gamma^2_{k+1})^{\frac{1-\alpha}{\alpha}} \right) \prod_{k=0}^{N-1} (1+2\eta (1-\alpha) C_h \gamma^2_{k+1}).
$$

Moreover, thanks to the Young type inequality $L^{1-\alpha}(\theta) \leq \frac{1-\alpha}{\alpha}\rho L^{\alpha}(\theta) + \frac{2\alpha-1}{\alpha}\rho^{-\frac{1-\alpha}{2\alpha-1}}$, for every $(\rho,\theta) \in \R^{*}_{+}\times \R^{d}$ and $\alpha \in(\frac12,1]$ and using Proposition \ref{CONT_LAPLACEL}, one obtains for $\rho=\overline{\rho}:=\frac12 \min(1, \varepsilon_{\beta} (8 \eta \alpha C_{\alpha}^2\Pi_{2,N})^{-1})$
\begin{align*}
\sup_{0\leq n\leq N} \E_{\theta_0}[e^{L^{1-\alpha}(\theta_n)}] & \leq \exp\left(\frac{2\alpha-1}{\alpha}\overline{\rho}^{-\frac{1-\alpha}{2\alpha-1}}\right)\exp\left(\left(L^{\alpha}(\theta_0) + \underline{C} \sum_{k=0}^{N-1}\gamma^2_{k+1}\right)\Pi_{2,N} \overline{\rho}\frac{1-\alpha}{\alpha} \right. \\
& \left. + \left(\frac{1}{2} \sum_{k=0}^{N-1} \gamma^2_{k+1}\right) \log\left(\E\left[e^{\frac{\varepsilon_{\beta}(1-\alpha)}{2\alpha} |U|^2}\right]\right) \right),
\end{align*}

\noindent so that for all $\lambda \in [0,1]$ 
$$
\E_{\theta_0}[\exp\left( \lambda L^{1-\alpha}(\theta_n)\right)] \leq \Psi_1(\gamma,\alpha,\theta_0,H) \lambda.
$$

\noindent Now, for $\lambda>1$, we use the Young-type inequality $\lambda L^{1-\alpha}(\theta_n) \leq \frac{2\alpha-1}{\alpha}\rho^{-\frac{1-\alpha}{2\alpha-1}} \lambda^{\frac{\alpha}{2\alpha-1}} + \frac{1-\alpha}{\alpha}\rho L^{\alpha}(\theta_n)$ to derive
\begin{align*}
\E_{\theta_0}[\exp(\lambda L^{1-\alpha}(\theta_n))] & \leq \exp\left( K\lambda^{\frac{\alpha}{2\alpha-1}} \right)
\end{align*}

\noindent with $K(\rho):= \frac{2\alpha -1}{\alpha} \rho^{-\frac{1-\alpha}{2\alpha-1}} + \log\E_{\theta_0}\left[\exp\left(\left(\frac{1-\alpha}{\alpha}\right)\rho L^{\alpha}(\theta_n)\right)\right]$ and $\frac{1-\alpha}{\alpha}\rho < \min(1, \varepsilon_{\beta} (8 \eta \alpha C_{\alpha}^2\Pi_{2,N})^{-1})$. We select $\rho = \overline{\rho}$ in the last inequality and use Proposition \ref{CONT_LAPLACEL} to bound the quantity $K(\overline{\rho})$.
\end{proof}
\begin{PROP}{(Control of the Lipschitz modulus of iterative kernels)}
\label{Lipschitz_control_SA}
Denote by $P_k$ and $P_{k,p}=P_{k} \circ \cdots \circ P_{p-1}$, $k, \ p \in \left\{0,\cdots, N-1\right\}$, $k \leq p$ the (Feller) transition kernel and the iterative kernels of the Markov chain $\theta$ defined by the scheme \eqref{RM}. Then for all Lipschitz function $f$ and for all $k, \ p \in \left\{0,\cdots, N-1\right\}$, $k\leq p$ the functions $P_{k}(f)$ are Lipschitz-continuous and one has
$$
[P_{k,p}(f)]_{1} : = \sup_{(\theta,\theta') \in (\R^{d})^2} \frac{\left|P_{k,p}(f)(\theta)-P_{k,p}(f)(\theta')\right|}{|\theta-\theta'|} \leq [f]_{1} \prod_{i=k}^{p-1}(1-2\underline{\lambda}\gamma_{i+1} + C_{H,\mu} \gamma^2_{i+1})^{\frac{1}{2}}
$$ 

\noindent where $[f]_{1}$ stands for the Lipschitz modulus of the function $f$ and $C_{H,\mu}:=2 C_{H}^2(1+\E[|U|^2])$.
\end{PROP}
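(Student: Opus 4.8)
The plan is to mirror the proof of Proposition~\ref{Lipschitz_control_Euler}: first establish a one-step estimate for the Lipschitz modulus of $P_k(f)$ involving the factor $(1-2\underline{\lambda}\gamma_{k+1}+C_{H,\mu}\gamma_{k+1}^2)^{1/2}$, then conclude by a straightforward induction on $p-k$ using $P_{k,p}(f)=P_k(P_{k+1,p}(f))$. Note first that $P_k(f)$ is well defined and finite for Lipschitz $f$ since $|f(\theta-\gamma_{k+1}H(\theta,u))|\le |f(0)|+[f]_1(|\theta|+\gamma_{k+1}|H(\theta,u)|)$ and $H(k,\theta,\cdot)\in\mathcal{L}^1(\mu)$ by the standing hypotheses; the Feller property follows from the continuity of $H(k,\cdot,u)$ and dominated convergence.

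First I would fix $(\theta,\theta')\in(\R^d)^2$ and a $1$-Lipschitz $f$, and write, using that $U_{k+1}$ has law $\mu$ and Jensen's inequality,
\begin{align*}
|P_k(f)(\theta)-P_k(f)(\theta')| &= \left|\E\left[f\big(\theta-\gamma_{k+1}H(\theta,U)\big)-f\big(\theta'-\gamma_{k+1}H(\theta',U)\big)\right]\right|\\
&\le [f]_1\,\E\left[\left|\theta-\theta'-\gamma_{k+1}\big(H(\theta,U)-H(\theta',U)\big)\right|^2\right]^{1/2}.
\end{align*}
Expanding the square and taking expectation, the cross term becomes $-2\gamma_{k+1}\langle\theta-\theta',h(\theta)-h(\theta')\rangle$ because $\E[H(\theta,U)]=h(\theta)$. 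Assumption \A{HUA} together with the fundamental theorem of calculus gives
$$
\langle\theta-\theta',h(\theta)-h(\theta')\rangle=\int_0^1\big\langle\theta-\theta',Dh\big(\theta'+t(\theta-\theta')\big)(\theta-\theta')\big\rangle\,dt\ge\underline{\lambda}\,|\theta-\theta'|^2.
$$
For the second-order term, \A{HL} yields $|H(\theta,u)-H(\theta',u)|\le C_H(1+|u|)|\theta-\theta'|$, so using $(1+|u|)^2\le 2(1+|u|^2)$,
$$
\E\left[|H(\theta,U)-H(\theta',U)|^2\right]\le 2C_H^2\big(1+\E[|U|^2]\big)|\theta-\theta'|^2 = C_{H,\mu}\,|\theta-\theta'|^2.
$$
Collecting these bounds gives $\E[|\theta-\theta'-\gamma_{k+1}(H(\theta,U)-H(\theta',U))|^2]\le(1-2\underline{\lambda}\gamma_{k+1}+C_{H,\mu}\gamma_{k+1}^2)|\theta-\theta'|^2$, hence $[P_k(f)]_1\le[f]_1(1-2\underline{\lambda}\gamma_{k+1}+C_{H,\mu}\gamma_{k+1}^2)^{1/2}$. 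One checks the factor is nonnegative: Cauchy--Schwarz applied to the bound above forces $|h(\theta)-h(\theta')|\ge\underline{\lambda}|\theta-\theta'|$, whence $C_{H,\mu}\ge\underline{\lambda}^2$ and $1-2\underline{\lambda}\gamma_{k+1}+C_{H,\mu}\gamma_{k+1}^2\ge(1-\underline{\lambda}\gamma_{k+1})^2\ge0$.

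Then I would conclude by induction on $p-k$: since $P_{k,p}(f)=P_k(P_{k+1,p}(f))$ and $P_{k+1,p}(f)$ is Lipschitz by the induction hypothesis, applying the one-step estimate to $g=P_{k+1,p}(f)$ gives $[P_{k,p}(f)]_1\le(1-2\underline{\lambda}\gamma_{k+1}+C_{H,\mu}\gamma_{k+1}^2)^{1/2}[P_{k+1,p}(f)]_1$, and iterating produces the claimed product. The argument is essentially routine; the only point requiring care is the correct use of \A{HUA} to extract the \emph{one-sided} lower bound on $\langle\theta-\theta',h(\theta)-h(\theta')\rangle$ (rather than a two-sided Lipschitz estimate on $h$), since this is precisely what generates the $-2\underline{\lambda}\gamma_{k+1}$ term and hence the contraction of the iterated kernels for small steps.
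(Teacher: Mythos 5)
Your proposal is correct and follows essentially the same route as the paper: the one-step bound via Jensen's inequality in $L^2$, the cross term controlled by \A{HUA} (through the integral representation of $h(\theta)-h(\theta')$), the quadratic term by \A{HL} giving $C_{H,\mu}=2C_H^2(1+\E[|U|^2])$, and then a straightforward induction on the iterated kernels. The extra remarks on well-posedness and the nonnegativity of $1-2\underline{\lambda}\gamma_{k+1}+C_{H,\mu}\gamma_{k+1}^2$ are harmless additions the paper leaves implicit.
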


\begin{proof} Using the Cauchy-Schwarz inequality, \A{HUA} then \A{HL}, for all $(\theta,\theta') \in (\R^{d})^2$, one has
\begin{align*}
|P_k(f)(\theta) - P_k(f)(\theta')| & \leq \E\left[\left|f(\theta-\gamma_{k+1} H(\theta,U_{k+1}))-f(\theta'-\gamma_{k+1} H(\theta',U_{k+1}))\right|\right] \\
& \leq [f]_1  \E\left[\left(\theta-\theta'-\gamma_{k+1} (H(\theta,U_{k+1})-H(\theta',U_{k+1}))\right)^2\right]^{\frac{1}{2}}\\
& \leq [f]_1 \left((\theta-\theta')^2-2\gamma_{k+1} \left\langle \theta-\theta', h(\theta)-h(\theta')\right\rangle + \gamma^2_{k+1} \E\left[|H(\theta,U_{k+1})-H(\theta',U_{k+1})|^2\right]\right)^{\frac{1}{2}} \\
& \leq [f]_1 (1-2\underline{\lambda}\gamma_{k+1} + 2 C_{H}^2(1+\E[|U|^2])\gamma^2_{k+1} )^{\frac{1}{2}} |\theta-\theta'|.
\end{align*}

\noindent A straightforward induction argument completes the proof.

\end{proof}
\begin{PROP}{(Control of the Laplace transform)}\label{CONT_LAPLACERM}
Denote by $\theta_{N}$ the value at step $N$ of the stochastic approximation algorithm \eqref{RM} with step sequence $\gamma:=(\gamma_{n})_{n\geq1}$ satisfying \eqref{STEP}. Assume that the innovations $(U_n)_{n\geq1}$ in \eqref{RM} satisfy \eqref{CONC_GAUSS} for some $\beta>0$. Let $f$ be a real-valued $1$-Lipschitz-continuous function defined on $\R^{d}$. Then, for all $\lambda \geq0$, for all $N\geq1$, for all $\alpha\in(\frac{1}{2},1]$, one has 
$$
\forall \lambda \geq 0, \ \ \E_{\theta_0}[\exp(\lambda f(\theta_N))]   \leq \exp\left(\E_{\theta_0}[\lambda f(\theta_N))] \right) \exp\left(\varphi_{\alpha}(\gamma,H, \theta_0) (C^{\gamma}_N\lambda^2 \vee C^{\gamma, \alpha}_N\lambda^{\frac{2\alpha}{2\alpha-1}})  \right)  
$$

\noindent with the two concentration rates  $C^{\gamma}_{N}:=\sum_{k=0}^{N-1}\gamma^2_{k+1}\frac{\Pi_{1,N}}{\Pi_{1,k}}$, with $\Pi_{1,N} := \prod_{k=0}^{N-1}(1-2\underline{\lambda}\gamma_{k+1} + C_{H,\mu} \gamma^2_{k+1})$ and $C^{\gamma, \alpha}_N := \sum_{k=0}^{N-1} \gamma^{\frac{2\alpha}{2\alpha-1}}_{k+1} (\frac{\Pi_{1,N}}{\Pi_{1,k}})^{\frac{2\alpha}{2\alpha-1}} ((k+1)\log^2(k+4))^{\frac{1-\alpha}{2\alpha-1}}$ for all $N\geq1$ and where $\varphi_{\alpha}(\gamma, H, \theta_0):= K_{4.1}2^{\frac{1-\alpha}{2\alpha-1}}   \frac{\beta C^2_\alpha}{4}\vee (\frac{\beta C^2_\alpha}{4})^{\frac{\alpha}{2\alpha-1}}\exp\left(\frac{1}{2\alpha-1} \sum_{k=0}^{N-1} \frac{1}{(k+1) \log^2(k+4)} \right)$.

If $\alpha=\frac{1}{2}$, then there exists two positive constants $\lambda_{4.1}$ and $\varphi_{1/2}(\gamma, H, \theta_0)$ such that
%
%
\begin{align*}
\forall \lambda \in [0, \lambda_{4.1}/ \tilde{s}_{N}), \ \E_{\theta_0}[\exp(\lambda f(\theta_N))] & \leq \exp\left(\lambda \E_{\theta_0}[f(\theta_N)]\right) \exp\left( 2\varphi_{1/2}(\gamma, H, \theta_0) C^{\gamma}_N  \frac{(\lambda / \lambda_{4.1})^2}{1-(\lambda \tilde{s}_N / \lambda_{4.1})} \right)
\end{align*}

\noindent with $ \tilde{s}_N := \max_{0 \leq k \leq N-1} (k+1)^{1/2} \log(k+4) \gamma_{k+1} \left(\frac{\Pi_{1,N}}{\Pi_{1,k}}\right)^{\frac12} \exp(\sum_{p=0}^{N-1} \frac{1}{(p+1) \log^2(p+4)})$. 

\end{PROP}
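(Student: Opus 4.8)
\textit{Proof plan.} The argument is a transcription, to the stochastic approximation setting, of the proof of Proposition~\ref{CONT_LAPLACE}; the only structural novelty is that the constant conjugate exponent used there ($p=1+C(\Delta)\Delta$) must be replaced by a suitably slowly decaying sequence, and that Corollary~\ref{CONTLAPLACELBIS} (resp.\ the $\alpha=\tfrac12$ analogue of Corollary~\ref{CONTLAPLACETER}) plays the role of Corollary~\ref{CONTLAPLACEBIS}. As in the proof of Proposition~\ref{CONT_LAPLACEL} we may, up to modifying a constant, assume $\sup_{n}\gamma_{n+1}=\gamma_1\le1$, so that the ratios $\Pi_{1,k}/\Pi_{1,k+1}$ and $\Pi_{1,N}/\Pi_{1,k}$ are bounded above and below by absolute constants (this uses only $\gamma_n\to0$).

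First I set $f_N:=f$ and $f_k:=P_{k,N}(f)=P_k\circ\cdots\circ P_{N-1}(f)$ for $k=N,\dots,0$, so that $f_k=P_k(f_{k+1})$, $f_0(\theta_0)=\E_{\theta_0}[f(\theta_N)]$, and, by Proposition~\ref{Lipschitz_control_SA}, $[f_k]_1\le(\Pi_{1,N}/\Pi_{1,k})^{1/2}$. Applying \eqref{first_step_to_conc_RM} to the $[f_{k+1}]_1$-Lipschitz function $f_{k+1}$ gives, for every $\theta$,
$$
P_k(\exp(\lambda f_{k+1}))(\theta)\le\exp\!\Big(\lambda f_k(\theta)+\tfrac{\beta C_\alpha^2}{4}\lambda^2[f_{k+1}]_1^2\gamma_{k+1}^2L^{1-\alpha}(\theta)\Big).
$$
Conditioning successively on $\mathcal F_{N-1},\mathcal F_{N-2},\dots$, and each time splitting the exponential of the sum by Hölder's inequality with conjugate exponents $(p_k,q_k)$, a backward induction yields
$$
\E_{\theta_0}[\exp(\lambda f(\theta_N))]\le\exp\!\big(\lambda\E_{\theta_0}[f(\theta_N)]\big)\prod_{k=0}^{N-1}\Big(\E_{\theta_0}\big[\exp(\mu_k L^{1-\alpha}(\theta_k))\big]\Big)^{1/(q_k\Pi^{(k)})},
$$
with $\Pi^{(k)}:=\prod_{j=k+1}^{N-1}p_j$ and $\mu_k:=q_k\,\tfrac{\beta C_\alpha^2}{4}\,(\Pi^{(k)})^2[f_{k+1}]_1^2\gamma_{k+1}^2\lambda^2$, exactly as in Proposition~\ref{CONT_LAPLACE} but with step- and level-dependent quantities.

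The decisive point, absent in the Euler case because there the steps are constant, is the choice $p_k:=1+\tfrac{1}{(k+1)\log^2(k+4)}$, hence $q_k=1+(k+1)\log^2(k+4)$. Since $\sum_{k}\tfrac{1}{(k+1)\log^2(k+4)}<\infty$ one gets $\Pi^{(k)}\le\prod_{j\ge0}p_j\le\exp\!\big(\sum_{p\ge0}\tfrac{1}{(p+1)\log^2(p+4)}\big)=:e^{S}<\infty$ uniformly in $N$, which is what keeps the concentration constant finite. Now I insert Corollary~\ref{CONTLAPLACELBIS}, $\log\E_{\theta_0}[\exp(\mu L^{1-\alpha}(\theta_k))]\le K_{4.1}(\mu\vee\mu^{\alpha/(2\alpha-1)})$, and sum the two resulting contributions. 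In the $\mu_k$-contribution the factors $q_k$ cancel against the exponent $1/q_k$, leaving $\tfrac{\beta C_\alpha^2}{4}\lambda^2\sum_k\Pi^{(k)}[f_{k+1}]_1^2\gamma_{k+1}^2\le e^{S}\tfrac{\beta C_\alpha^2}{4}\lambda^2\,C_N^{\gamma}$ (up to the harmless comparison of $\Pi_{1,k}$ and $\Pi_{1,k+1}$). In the $\mu_k^{\alpha/(2\alpha-1)}$-contribution the division by $q_k\Pi^{(k)}$ leaves the powers $q_k^{(1-\alpha)/(2\alpha-1)}$ and $(\Pi^{(k)})^{1/(2\alpha-1)}$; using $q_k\le2(k+1)\log^2(k+4)$ and $\Pi^{(k)}\le e^{S}$ produces precisely the weight $((k+1)\log^2(k+4))^{(1-\alpha)/(2\alpha-1)}$ and the factor $e^{S/(2\alpha-1)}$ appearing in $C_N^{\gamma,\alpha}$ and $\varphi_\alpha(\gamma,H,\theta_0)$. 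Collecting $K_{4.1}$, $(\beta C_\alpha^2/4)\vee(\beta C_\alpha^2/4)^{\alpha/(2\alpha-1)}$, $2^{(1-\alpha)/(2\alpha-1)}$ and $e^{S/(2\alpha-1)}$ into $\varphi_\alpha$ gives the stated bound for $\alpha\in(\tfrac12,1]$.

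For $\alpha=\tfrac12$ the scheme is identical, but Corollary~\ref{CONTLAPLACELBIS} is replaced by the $\alpha=\tfrac12$ analogue of Corollary~\ref{CONTLAPLACETER} — a consequence of Proposition~\ref{CONT_LAPLACEL} at $\alpha=\tfrac12$ together with the geometric estimate on the even moments of $U$ (the constant $\lambda_{4.1}$ being defined by $\E[\exp(\lambda_{4.1}^2\,c\,|U|^2)]\le2$ for the appropriate $c$): it gives $\sup_n\log\E_{\theta_0}[\exp(\nu^2L^{1/2}(\theta_n))]\le K\frac{(\nu/\lambda_{4.1})^2}{1-\nu/\lambda_{4.1}}$ for $\nu<\lambda_{4.1}$. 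Here $\mu_k=\nu_k^2$ with $\nu_k\propto\lambda\sqrt{q_k}\,\Pi^{(k)}[f_{k+1}]_1\gamma_{k+1}$, so the constraint $\nu_k<\lambda_{4.1}$ for all $k$ forces $\lambda<\lambda_{4.1}/\tilde s_N$ with $\tilde s_N$ as in the statement; summing the contributions $\tfrac{1}{q_k\Pi^{(k)}}K\frac{(\nu_k/\lambda_{4.1})^2}{1-\nu_k/\lambda_{4.1}}$, bounding each $\nu_k$ by a constant multiple of $\tilde s_N\lambda$ in the denominator and keeping $\sqrt{q_k}\,\Pi^{(k)}$ in the numerator (whence the $q_k$ cancellation again produces $C_N^\gamma$), yields the claimed $\frac{(\lambda/\lambda_{4.1})^2}{1-\lambda\tilde s_N/\lambda_{4.1}}$ form with $\varphi_{1/2}(\gamma,H,\theta_0)$ absorbing the constants. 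The main obstacle is not conceptual but one of bookkeeping: one must make the non-obvious choice $p_k=1+1/((k+1)\log^2(k+4))$ (any summable perturbation works, but this one delivers the sharp logarithmic weights), and then track carefully how $q_k\sim(k+1)\log^2(k+4)$ enters the superlinear $\mu_k^{\alpha/(2\alpha-1)}$ term so as to land exactly on the sequences $C_N^{\gamma,\alpha}$ and $\tilde s_N$.
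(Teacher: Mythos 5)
Your proof follows essentially the same route as the paper's: the same backward induction combining the one-step bound \eqref{first_step_to_conc_RM} with H\"older's inequality for the slowly varying conjugate exponents $p_k=1+1/((k+1)\log^2(k+4))$, the Lipschitz control of Proposition \ref{Lipschitz_control_SA}, and Corollary \ref{CONTLAPLACELBIS} for $\alpha\in(\tfrac12,1]$ (resp.\ the $\alpha=\tfrac12$ analogue of Corollary \ref{CONTLAPLACETER}, which the paper formulates inline via Lemma \ref{CONTLEMMA}), with the same bookkeeping of the $q_k$-powers and of $\prod_j p_j\le\exp\bigl(\sum_{k}1/((k+1)\log^2(k+4))\bigr)$ leading to $C^{\gamma}_N$, $C^{\gamma,\alpha}_N$, $\tilde s_N$ and $\varphi_\alpha$. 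The only (harmless) caveat is your preliminary claim that $\Pi_{1,N}/\Pi_{1,k}$ is bounded above and below by absolute constants, which is neither true in general nor needed; what you actually use is the bounded one-step ratio $\Pi_{1,k}/\Pi_{1,k+1}$, exactly the comparison the paper also makes implicitly when passing from $[P_{k+1,N}(f)]_1^2$ to $\Pi_{1,N}/\Pi_{1,k}$.
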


\begin{proof} The proof relies on similar arguments as those used for the proof of Proposition \ref{CONT_LAPLACE}. For $\lambda \geq0$ and $k \in \left\{0,\cdots,N-1\right\}$, one has
$$
P_k(\exp(\lambda f))(\theta) \leq \exp\left(\lambda P_k(f) + \frac{\lambda^2}{4} \beta \gamma^2_{k+1}[f]^{2}_1 C^2_\alpha L^{1-\alpha}(\theta)\right)
$$

\noindent Taking expectation on both sides of the last inequality with $\theta=\theta_k$ and applying the H\"older inequality with conjugate exponents $(p_k,q_k)$ (to be fixed later on), one obtains
$$
\E_{\theta_0}[\exp(\lambda f(\theta_k))] \leq \E_{\theta_0}\left[\exp\left(\lambda p_k P_k(f)(\theta_k)\right)\right]^{\frac{1}{p_k}} \E_{\theta_0}\left[\exp\left(q_k\frac{\lambda^2}{4} \beta \gamma^2_{k+1}[f]^{2}_1 C^2_\alpha L^{1-\alpha}(\theta_k)\right)\right]^{\frac{1}{q_k}}
$$

\noindent and applying the last inequality to $f:=P_{k+1,N}(f)$ yields
\begin{equation}
\E_{\theta_0}[\exp(\lambda P_{k+1,N}(f)(\theta_k))] \leq \E_{\theta_0}\left[\exp\left(\lambda p_k P_{k,N}(f)(\theta_k)\right)\right]^{\frac{1}{p_k}} \E_{\theta_0}\left[\exp\left(q_k\frac{\lambda^2}{4} \beta \gamma^2_{k+1}[P_{k+1,N}(f)]^{2}_1 C^2_\alpha L^{1-\alpha}(\theta_k)\right)\right]^{\frac{1}{q_k}}.
\label{FIRSTREC}
\end{equation}


\noindent We use Corollary \ref{CONTLAPLACELBIS} to obtain for $\alpha \in(\frac12,1]$
\begin{align*}
\E_{\theta_0}\left[\exp\left(q_k\frac{\lambda^2}{4} \beta \gamma^2_{k+1}[P_{k+1,N}(f)]^{2}_1 C^2_\alpha L^{1-\alpha}(\theta_k)\right)\right]^{\frac{1}{q_k}}  & \leq \exp\left(K_{4.1}  \frac{\beta C^2_\alpha}{4}\vee \left(\frac{\beta C^2_\alpha}{4}\right)^{\frac{\alpha}{2\alpha-1}} \right. \\ 
 & \left. \times (\gamma^2_{k+1} [P_{k+1,N}(f)]^2_1\lambda^2 \vee \gamma^{\frac{2\alpha}{2\alpha-1}}_{k+1} [P_{k+1,N}(f)]^{\frac{2\alpha}{2\alpha-1}}_1 q^{\frac{1-\alpha}{2\alpha-1}}_{k}\lambda^{\frac{2\alpha}{2\alpha-1}}) \right) \\
& := f_k(\lambda) 
\end{align*}

\noindent where we temporarily set $f_k(\lambda):= \exp\left(K_{4.1}  \frac{\beta C^2_\alpha}{4}\vee \left(\frac{\beta C^2_\alpha}{4}\right)^{\frac{\alpha}{2\alpha-1}} (\gamma^2_{k+1} [P_{k+1,N}(f)]^2_1\lambda^2 \vee\gamma^{\frac{2\alpha}{2\alpha-1}}_{k+1} [P_{k+1,N}(f)]^{\frac{2\alpha}{2\alpha-1}}_1 q^{\frac{1-\alpha}{2\alpha-1}}_{k}\lambda^{\frac{2\alpha}{2\alpha-1}}) \right)$ for all $\lambda \geq 0$ in the interests of simplifying notation and analysis. Now,  an elementary induction argument leads to
\begin{align}
\E_{\theta_0}[\exp(\lambda f(\theta_N))] & = \E_{\theta_0}[\exp(\lambda P_{N,N}f(\theta_N))] \nonumber \\
& \leq \E_{\theta_0}[\exp(\lambda \prod_{k=0}^{N-1} p_{k} P_{0,N}(f)(\theta_0))]^{\frac{1}{\prod_{k=0}^{N-1} p_{k}}}  \prod_{k=0}^{N-1}f_{N-1-k}\left(\lambda \prod_{i=1}^k p_{N-i}\right)^{\frac{1}{\prod_{i=1}^k p_{N-i}}} \label{REC_ALGOSTO}
\end{align}

We select $p_k:= 1 + \frac{1}{(k+1) \log^2(k+4)}$, $q_k= (1 + \frac{1}{(k+1) \log^2(k+4)}) (k+1) \log^2(k+4) \leq 2 (k+1) \log^2(k+4)$, $k=0, \cdots, N-1$ so that $\prod_{k=0}^{N-1} p_k$ converges and more precisely we have $\prod_{k=0}^{N-1} p_k < \exp(\sum_{k=0}^{N-1} \frac{1}{(k+1) \log^2(k+4)}) < \infty$. 


\noindent We introduce for sake of simplicity $\varphi_{\alpha}(\gamma,H, \theta_0):= K_{4.1}2^{\frac{1-\alpha}{2\alpha-1}}   \frac{\beta C^2_\alpha}{4}\vee (\frac{\beta C^2_\alpha}{4})^{\frac{\alpha}{2\alpha-1}}\exp\left(\frac{1}{2\alpha-1} \sum_{k=0}^{N-1} \frac{1}{(k+1) \log^2(k+4)} \right)$. Now, using Proposition \ref{Lipschitz_control_SA} and Corollary \ref{CONTLAPLACELBIS}, we easily derive from \eqref{REC_ALGOSTO} 
\begin{align*}
\forall \lambda \geq 0, \ \ \E_{\theta_0}[\exp(\lambda f(\theta_N))]  & \leq \exp\left(\E_{\theta_0}[\lambda f(\theta_N))] \right) \exp\left(\varphi_{\alpha}(\gamma,H, \theta_0) (C^{\gamma}_N\lambda^2 \vee C^{\gamma, \alpha}_N\lambda^{\frac{2\alpha}{2\alpha-1}})  \right)  
\end{align*}

\noindent with $C^{\gamma, \alpha}_N := \sum_{k=0}^{N-1} \gamma^{\frac{2\alpha}{2\alpha-1}}_{k+1} (\frac{\Pi_{1,N}}{\Pi_{1,k}})^{\frac{2\alpha}{2\alpha-1}} ((k+1)\log^2(k+4))^{\frac{1-\alpha}{2\alpha-1}}$.

For $\alpha=\frac12$, we start from \eqref{FIRSTREC}. First, we use the control obtained in Proposition \ref{CONT_LAPLACEL} to derive
\begin{align*}
\E_{\theta_0}\left[\exp\left(q_k\frac{\lambda^2}{4} \beta \gamma^2_{k+1}[P_{k+1,N}(f)]^{2}_1 C^2_{1/2} L^{\frac12}(\theta_k)\right)\right]^{\frac{1}{q_k}} & \leq \exp \left(\left(L^{\frac12}(\theta_0) + \underline{C} \sum_{p=0}^{N-1}\gamma^2_{p+1}\right)\Pi_{2,N}\left(1/2\right) \frac{\beta C^2_{1/2}}{4}  \gamma^2_{k+1}[P_{k+1,N}(f)]^{2}_1 \lambda^2 \right. \\
& \left.  + \frac{1}{q_k} \left(\frac{1}{2} \sum_{p=0}^{N-1} \gamma^2_{p+1}\right) \right. \\
& \left. \times \ \log\E\left[\exp\left(\beta \eta C_{1/2}^4 \Pi_{2,N}\left(1/2\right) q_k \gamma^2_{k+1} [P_{k+1,N}(f)]^{2}_1 \lambda^2|U|^2\right)\right]  \right).
\end{align*}
\noindent  To simplify the latter bound, that is to obtain an explicit and computable formula for the second term appearing in the right hand side, we will need the following lemma:
\begin{LEMME}\label{CONTLEMMA} 
For all $\lambda \in [0,\lambda_{4.1}/s^{1/2}_N)$, one has
\begin{align*}
\log\E\left[\exp\left(\beta \eta C_{1/2}^4 \Pi_{2,N}\left(1/2\right) q_k \gamma^2_{k+1} [P_{k+1,N}(f)]^{2}_1 \lambda^2|U|^2\right)\right] & \leq  \beta \eta C_{1/2}^4 \Pi_{2,N}\left(1/2\right) \E[|U|^2] q_k \gamma^2_{k+1} [P_{k+1,N}(f)]^{2}_1 \lambda^2  \\ 
& + 2 q_k  \gamma^2_{k+1} [P_{k+1,N}(f)]^{2}_1  \frac{(\lambda / \lambda_{4.1})^2}{1-(\lambda s^{1/2}_N / \lambda_{4.1})},
\end{align*}

\noindent with $s_N:=\max_{0 \leq k \leq N-1 }q_k \gamma^2_{k+1} \frac{\Pi_{1,N}}{\Pi_{1,k}}$ and $\lambda_{4.1}$ satisfies $\E[\exp(\lambda_{4.1}^2 \beta \eta C_{1/2}^4 \Pi_{2,N}\left(1/2\right)|U|^2)]\leq 2$.
\end{LEMME}

\begin{proof} The proof is similar to the proof of Corollary \ref{CONTLAPLACETER}. By definition of $\lambda_{4.1}$, $ \lambda^{2p}_{4.1} (\beta \eta C^{4}_{1/2} \Pi_{2,N}(1/2)/2)^p \E[|U|^{2p}] \leq 2 p!$, $\forall p\geq 1$. Hence, setting $C_1:= \beta \eta C_{1/2}^4 \Pi_{2,N}\left(1/2\right)$ we easily deduce,
 \begin{align*}
\log\E\left[e^{\lambda^2 C_1  q_k \gamma^2_{k+1} [P_{k+1,N}(f)]^{2}_1|U|^2}\right] & - \lambda^2 C_1  q_k \gamma^2_{k+1} [P_{k+1,N}(f)]^{2}_1 \E[|U|^2]  \\
& \leq \sum_{p\geq2} \frac{\lambda^{2p}C^p_1 ( q_k \gamma^2_{k+1} [P_{k+1,N}(f)]^{2}_1)^p \E[|U|^{2p}]}{p!} \\
& \leq 2 \sum_{p\geq2} \left(\frac{\lambda^{2} q_k \gamma^2_{k+1} [P_{k+1,N}(f)]^{2}_1}{\lambda^2_{4.1}}\right)^p \\
& \leq \left\{ \begin{array}{l}
 2 q_k \gamma^2_{k+1} [P_{k+1,N}(f)]^{2}_1\frac{(\lambda /\lambda_{4.1})^2}{1- (\lambda s^{1/2}_N/\lambda_{4.1})}
, \ \mbox{if } \ \lambda < \lambda_{4.1}/s^{1/2}_N, \vspace*{0.2cm} \\
 + \infty ,\  \ \mbox{otherwise}.
\end{array}
\right.
\end{align*} 

\noindent This completes the proof.
\end{proof}

Using the previous lemma, we obtain for all $\lambda \in [0,\lambda_{4.1}/s^{1/2}_N)$, 
\begin{align*}
\E_{\theta_0}\left[\exp\left(q_k\frac{\lambda^2}{4} \beta \gamma^2_{k+1}[P_{k+1,N}(f)]^{2}_1 C^2_{1/2} L^{\frac12}(\theta_k)\right)\right]^{\frac{1}{q_k}} & \leq \exp \left(\Psi(N,\gamma, \theta_0) \gamma^2_{k+1}[P_{k+1,N}(f)]^{2}_1 \lambda^2 \right. \\
& \left.  + \left(\sum_{p=0}^{N-1} \gamma^2_{p+1}\right)   \gamma^2_{k+1} [P_{k+1,N}(f)]^{2}_1  \frac{(\lambda / \lambda_{4.1})^2}{1-(\lambda s^{1/2}_N / \lambda_{4.1})}\right),
\end{align*}

\noindent where we introduced the notation $\Psi(N,\gamma, \theta_0):= \left(L^{\frac12}(\theta_0) + \underline{C} \sum_{p=0}^{N-1}\gamma^2_{p+1}\right)\Pi_{2,N}\left(1/2\right) \frac{\beta C^2_{1/2}}{4} + \beta \eta C_{1/2}^4 \Pi_{2,N}\left(1/2\right) \E[|U|^2] $.

Now, as for $\alpha \in (\frac12,1]$, an induction argument in the spirit of \eqref{REC_ALGOSTO} yields for all $\lambda \in [0, \lambda_{4.1}/ \tilde{s}_N)$
\begin{align*}
\E_{\theta_0}[\exp(\lambda f(\theta_N))] & \leq \exp\left(\lambda \E_{\theta_0}[f(\theta_N)]\right) \exp\left(C^{\gamma}_N\Psi\left(N, \gamma, \theta_0\right) e^{\sum_{k=0}^{N-1} \frac{1}{(k+1) \log^2(k+4)}} \lambda^2 \right.  \\
 &  \left. + e^{\sum_{k=0}^{N-1} \frac{1}{(k+1) \log^2(k+4)}} \left(\sum_{p=0}^{N-1} \gamma^2_{p+1}\right)  C^{\gamma}_N \frac{(\lambda / \lambda_{4.1})^2}{1-(\lambda \tilde{s}_N / \lambda_{4.1})} \right), \\
 & \leq \exp\left(\lambda \E_{\theta_0}[f(\theta_N)]\right) \exp\left(2\varphi_{1/2}(\gamma, H, \theta_0) C^{\gamma}_N \left(  (\lambda/\lambda_{4.1})^2 \vee  \frac{(\lambda / \lambda_{4.1})^2}{1-(\lambda \tilde{s}_N / \lambda_{4.1})} \right) \right) \\
 & = \exp\left(\lambda \E_{\theta_0}[f(\theta_N)]\right) \exp\left(2\varphi_{1/2}(\gamma, H, \theta_0)C^{\gamma}_N  \frac{(\lambda / \lambda_{4.1})^2}{1-(\lambda \tilde{s}_N / \lambda_{4.1})}  \right)
\end{align*}

\noindent with $\varphi_{1/2}(\gamma, H, \theta_0):= \exp(\sum_{k=0}^{N-1} \frac{1}{(k+1) \log^2(k+4)}) (\lambda^2_{4.1} \Psi(N,\gamma, \theta_0) + \sum_{p=0}^{N-1} \gamma^2_{p+1}) $, $ \tilde{s}_N := s^{1/2}_N \exp(\sum_{k=0}^{N-1} \frac{1}{(k+1) \log^2(k+4)})$, 
and where we used again $\prod_{k=0}^{N-1} p_k < \exp(\sum_{k=0}^{N-1} \frac{1}{(k+1) \log^2(k+4)})$.

%
%
%

\end{proof}
\medskip

In contrast to Euler like schemes, a bias appears in the non-asymptotic deviation bound for the stochastic approximation algorithm. Consequently, it is crucial to have a control on it. At step $n$ of the algorithm, it is given by $\delta_{n}:=\E[\left|\theta_{n}-\theta^{*}\right|]$. Under the current assumptions \A{HL}, \A{HLS}$_\alpha$, \A{HUA}, we have the following proposition.
\begin{PROP}[Control of the bias]
\label{CTR_BIAS}
For all $n \geq 1$, we have 
$$
\delta_{n} \leq \exp\left(-\underline{\lambda} \Gamma_{1,n} + C_{\alpha,\mu}\Gamma_{2,n}\right) \left|\theta_0 - \theta^{*}\right| + (2C_{\alpha,\mu})^{\frac{1}{2}} \left(\sum_{k=0}^{n-1} \gamma^2_{k+1} \exp\left(-2\underline{\lambda} (\Gamma_{1,n}-\Gamma_{1,k+1}) + 2C_{\alpha,\mu} (\Gamma_{2,n}-\Gamma_{2,k+1})\right)\right)^{\frac{1}{2}},
$$

\noindent where $\Gamma_{1,n} :=\sum_{k=1}^{n} \gamma_{k}$, $\Gamma_{2,n} :=\sum_{k=1}^{n} \gamma^2_{k}$, $C_{\alpha,\mu}:=\underline{\lambda}^2 /2+2 C_{\alpha}K\E[|U|^2]$ with $K>0$.
\end{PROP}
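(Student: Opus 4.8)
The plan is to derive a recursive inequality on the squared error $e_n := |\theta_n - \theta^*|^2$, take expectations, and then unroll the recursion. First I would write, using \eqref{RM} and the fact that $h(\theta^*) = 0$,
\[
\theta_{n+1} - \theta^* = (\theta_n - \theta^*) - \gamma_{n+1}\bigl(h(\theta_n) - h(\theta^*)\bigr) - \gamma_{n+1}\bigl(H(\theta_n, U_{n+1}) - h(\theta_n)\bigr),
\]
and expand the square. The cross term between $(\theta_n - \theta^*) - \gamma_{n+1}(h(\theta_n)-h(\theta^*))$ and the centered noise $H(\theta_n,U_{n+1}) - h(\theta_n)$ vanishes in conditional expectation given $\mathcal{F}_n$, since $\E[H(\theta_n, U_{n+1}) \mid \mathcal{F}_n] = h(\theta_n)$. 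This yields
\[
\E[e_{n+1}\mid \mathcal{F}_n] = \bigl|(\theta_n-\theta^*) - \gamma_{n+1}(h(\theta_n)-h(\theta^*))\bigr|^2 + \gamma_{n+1}^2\, \E[\,|H(\theta_n,U_{n+1}) - h(\theta_n)|^2 \mid \mathcal{F}_n].
\]

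Next I would bound each of the two terms on the right. For the first, \A{HUA} gives $\langle h(\theta_n) - h(\theta^*), \theta_n - \theta^*\rangle \geq \underline{\lambda}\,e_n$ (by integrating $Dh$ along the segment), so
\[
\bigl|(\theta_n-\theta^*) - \gamma_{n+1}(h(\theta_n)-h(\theta^*))\bigr|^2 \leq (1 - 2\underline{\lambda}\gamma_{n+1}) e_n + \gamma_{n+1}^2 |h(\theta_n) - h(\theta^*)|^2,
\]
and the remaining $|h(\theta_n)-h(\theta^*)|^2$ I would control by a linear-in-$e_n$ bound coming from \A{HUA} together with the linear growth of $h$ implied by \A{HLS}$_\alpha$ (this is where a constant proportional to $\underline{\lambda}^2$ enters, absorbing the term into $C_{\alpha,\mu}$ via $\gamma_{n+1}^2 |h(\theta_n)-h(\theta^*)|^2 \le \gamma_{n+1}^2 C\, e_n$, then using $\gamma_{n+1}^2 \le \gamma_{n+1}^2$ crudely). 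For the noise term, \A{HLS}$_\alpha$ gives $|H(\theta_n, u) - h(\theta_n)|^2 \le 2C_\alpha^2 L^{1-\alpha}(\theta_n)(\E[|U|^2] + |u|^2)$, and since $L(\theta) = \tfrac12|\theta-\theta^*|^2$ is a valid choice under the linear-growth hypothesis (as the paper remarks right after \A{HUA}), one gets $\E[\,|H(\theta_n,U_{n+1})-h(\theta_n)|^2\mid\mathcal{F}_n] \le C' (1 + e_n)$ for a suitable $C'$ involving $C_\alpha, K, \E[|U|^2]$; here $K$ is the constant converting the $L^{1-\alpha}$ factor — which is $\le L^{1/2}$ at worst for $\alpha \ge 1/2$ — into something linear in $e_n$ plus a constant, via Young's inequality. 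Collecting terms produces
\[
\E[e_{n+1}\mid \mathcal{F}_n] \leq (1 - 2\underline{\lambda}\gamma_{n+1} + 2C_{\alpha,\mu}\gamma_{n+1}^2) e_n + 2C_{\alpha,\mu}\gamma_{n+1}^2,
\]
with $C_{\alpha,\mu} = \underline{\lambda}^2/2 + 2C_\alpha K \E[|U|^2]$ after combining constants.

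Then I would take full expectations, set $u_n := \E[e_n]$, and unroll the affine recursion $u_{n+1} \le a_{n+1} u_n + 2C_{\alpha,\mu}\gamma_{n+1}^2$ with $a_{k+1} := 1 - 2\underline{\lambda}\gamma_{k+1} + 2C_{\alpha,\mu}\gamma_{k+1}^2$. Using the elementary bound $1 + x \le e^x$, one has $\prod_{k=i}^{n-1} a_{k+1} \le \exp(-2\underline{\lambda}(\Gamma_{1,n}-\Gamma_{1,i}) + 2C_{\alpha,\mu}(\Gamma_{2,n}-\Gamma_{2,i}))$, which gives
\[
\E[|\theta_n - \theta^*|^2] \le e^{-2\underline{\lambda}\Gamma_{1,n} + 2C_{\alpha,\mu}\Gamma_{2,n}} |\theta_0-\theta^*|^2 + 2C_{\alpha,\mu}\sum_{k=0}^{n-1}\gamma_{k+1}^2 e^{-2\underline{\lambda}(\Gamma_{1,n}-\Gamma_{1,k+1}) + 2C_{\alpha,\mu}(\Gamma_{2,n}-\Gamma_{2,k+1})}.
\]
Finally, by Jensen's inequality $\delta_n = \E[|\theta_n-\theta^*|] \le \E[|\theta_n-\theta^*|^2]^{1/2}$, and using $\sqrt{a+b}\le\sqrt a+\sqrt b$ on the right-hand side yields exactly the claimed bound. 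The main obstacle I anticipate is the bookkeeping in the second step: getting the constants to land precisely on $C_{\alpha,\mu} = \underline{\lambda}^2/2 + 2C_\alpha K\E[|U|^2]$ requires choosing the Young-inequality split and the $|h(\theta_n)-h(\theta^*)|^2$ bound so that the $\underline{\lambda}^2$ piece and the noise-variance piece combine cleanly, rather than any genuine difficulty — the structure of the argument is the standard Robbins–Monro $L^2$ estimate.
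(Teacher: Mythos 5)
Your proposal is correct and follows essentially the same route as the paper's proof: a conditional second-moment recursion in which the martingale cross term vanishes, \A{HUA} (through the integrated Jacobian) supplies the $-2\underline{\lambda}\gamma_{n+1}$ contraction, \A{HLS}$_\alpha$ together with the at-most-quadratic growth of $L$ (hence sub-quadratic growth of $L^{1-\alpha}$) bounds the noise variance by a constant times $1+\E_{\theta_0}[|\theta_n-\theta^*|^2]$, the affine recursion is unrolled with $1+x\le e^{x}$, and Jensen plus $\sqrt{a+b}\le\sqrt{a}+\sqrt{b}$ concludes. The only cosmetic differences are that the paper expresses the drift contraction via the matrix $J_n=\int_0^1 Dh(\theta^*+\lambda(\theta_n-\theta^*))\,d\lambda$ and the bound $(1-\underline{\lambda}\gamma_{n+1})^2$ --- which is where the $\underline{\lambda}^2/2$ piece of $C_{\alpha,\mu}$ actually originates, rather than from your Young-type bookkeeping on $|h(\theta_n)-h(\theta^*)|^2$ --- and that $L$ in \A{HLS}$_\alpha$ is a given Lyapunov-type function (not necessarily $\frac12|\theta-\theta^*|^2$), neither of which changes the structure of the argument.
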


\noindent \textit{Proof.}
With the notations of Section \ref{RM_SEC}, we define for all $n\ge 1, \ \Delta M_n:= h(\theta_{n-1})- H(\theta_{n-1},U_n)= \E[\left. H(\theta_{n-1},U_n) \right| \mathcal{F}_{n-1}]- H(\theta_n,U_n)$. Recalling that $(U_n)_{n \geq 1} $ is a sequence of i.i.d. random variables we have that $(\Delta M_n)_{n\geq1}$ is a sequence of martingale increments w.r.t. the natural filtration $\mathcal{F}:=(\F_n:=\sigma(\theta_0,U_1,\cdots,U_n,); n\geq1)$.

From the dynamic \eqref{RM}, we now write for all $n\geq0$,
\begin{eqnarray*}
z_{n+1} & := & \theta_{n+1}-\theta^*=\theta_n-\theta^*- \gamma_{n+1}\left\{h(\theta_n)-\Delta M_{n+1}\right\}\\
 & = & \theta_{n}-\theta^*-\gamma_{n+1}  \int_0^1d\lambda Dh(\theta^*+\lambda  (\theta_n-\theta^*) )(\theta_n-\theta^*) +\gamma_{n+1}\Delta M_{n+1},
\end{eqnarray*}

\noindent where we used that $h(\theta^*)=0 $ for the last equality. Setting $J_n:=\int_0^1d\lambda Dh(\theta^*+\lambda (\theta_n-\theta^*) )$, we obtain $z_{n+1} = (I-\gamma_{n+1} J_n)z_n + \gamma_{n+1} \Delta M_{n+1}$ which yields 
\begin{align*}
\E_{\theta_0}[|z_{n+1}|^2] & = \E_{\theta_0}[|I-\gamma_{n+1} J_n|^2 |z_n|^2] + 2\gamma_{n+1} \E_{\theta_0}[(I-\gamma_{n+1} J_n) \Delta M_{n+1}] + \gamma^2_{n+1} \E_{\theta_0}[|\Delta M_{n+1}|^2] \\
& = \E_{\theta_0}[|I-\gamma_{n+1} J_n|^2 |z_n|^2] + \gamma^2_{n+1} \E_{\theta_0}[|\Delta M_{n+1}|^2].
\end{align*}

 From assumption \A{HLS}$_\alpha$, we deduce that $\forall (\theta,u) \in \R^d \times \R^q$, $|h(\theta) - H(\theta,u)|^2 \leq 2 C^2_\alpha L^{1-\alpha}(\theta) (\E[|U|^2]+|u|^2)$ which combined with the independence of $\theta_n$ and $U_{n+1}$ clearly implies
$$
\E_{\theta_0}[|h(\theta_n) - H(\theta_n,U_{n+1})|^2] \leq 4 C^2_\alpha \E[|U|^2] \E_{\theta_0}[L^{1-\alpha}(\theta_n)].
$$

Now, let us notice that $L$ has sub-quadratic growth so that there exists a constant $K>0$ such that
$$
\E_{\theta_0}\left[|\Delta M_{n+1}|^2\right]  = \E_{\theta_0}\left[|h(\theta_n)-H(\theta_n,U_{n+1})|^2\right]   \leq 4C^2_{\alpha}\E[|U|^2] \E_{\theta_0}\left[L^{1-\alpha}(\theta_n)\right] \leq 4 K C^2_{\alpha}\E[|U|^2](1+ \E_{\theta_0}[|z_n|^2]),
$$

\noindent which provides the following bound
\begin{align*}
\E_{\theta_0}[|z_{n+1}|^2] & \leq (1-\underline{\lambda}\gamma_{n+1})^{2}\E_{\theta_0}[|z_n|^2] + 4 K C^2_{\alpha} \E[|U|^2] \gamma^2_{n+1} \E_{\theta_0}[|z_n|^2]  \\
& \leq \left(1-2\underline{\lambda}\gamma_{n+1}+ 2 C_{\alpha,\mu}\gamma_{n+1}^2\right) \E_{\theta_0}[|z_n|^2]+ 2 C_{\alpha,\mu}\gamma^2_{n+1}.
\end{align*}

\noindent Temporarily setting $\tilde{\Pi}_{n}=\prod_{p=0}^{n-1}(1-2\underline{\lambda}\gamma_{p+1}+2C_{\alpha,\mu}\gamma_{p+1}^2)$, a straightforward induction argument provides
\begin{align*}
\E_{\theta_0}[|z_{n}|^2] & \leq \tilde{\Pi}_{n} |\theta_0-\theta^{*}|^2 + 2C_{\alpha,\mu} \sum_{k=0}^{n-1} \gamma^2_{k+1} \tilde{\Pi}_n \tilde{\Pi}^{-1}_{k+1} \\
& \leq e^{-2\underline{\lambda} \Gamma_{1,n} + 2C_{\alpha,\mu} \Gamma_{2,n} } |\theta_0-\theta^{*}|^2  + 2C_{\alpha,\mu} \sum_{k=0}^{n-1} \gamma^2_{k+1} e^{-2\underline{\lambda} (\Gamma_{1,n}-\Gamma_{1,k+1}) + 2C_{\alpha,\mu} (\Gamma_{2,n}-\Gamma_{2,k+1})}
\end{align*}

\noindent where we used the elementary inequality, $1+x \leq \exp(x)$,  $x \in \R$. This completes the proof.

\subsection{Proof of Theorem \ref{THM_TE_SA_AV}} \label{SEC:PROOF:SAAV}

\begin{PROP}{(Control of the Lipschitz modulus of iterative kernels)}
\label{Lipschitz_control_SA_AV}
Denote by $K_k$ and $K_{k,p}=K_{k} \circ \cdots \circ K_{p-1}$, $k, \ p \in \left\{0,\cdots, N-1\right\}$, $k \leq p$ the (Feller) transition kernel and the iterative kernels of the Markov chain $z=(\bar{\theta},\theta)$ defined by the scheme \eqref{RM}, \eqref{RM_AV}. Let $f:\R^d \rightarrow \R$ be a $1$-Lipschitz function. Then for all $k, \ p \in \left\{0,\cdots, N-1\right\}$, $k\leq p$ the functions $K_{k,p}(f):z\mapsto \E[\left. f(\bar{\theta}_{p+1}) \right| z_k=z]$ are Lipschitz-continuous. In particular, for all $(z,z') \in (\R^d \times \R^d)^2$, one has
$$
| K_{k,p}(f)(z)-K_{k,p}(f)(z') | \leq \frac{k+1}{p+1} |z_1-z'_1|+ \frac{1}{p+1}\sum_{j=k+1}^{p} \left(\frac{\Pi_{1,j}}{\Pi_{1, k}}\right)^{\frac12} |z_2-z'_2| 
$$ 

\noindent where $\Pi_{1,p}=\prod_{k=0}^{p-1}(1-2\underline{\lambda}\gamma_{k+1} + C_{H,\mu} \gamma^2_{k+1})$.
\end{PROP}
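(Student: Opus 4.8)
The plan is to establish the estimate by first unravelling the composition $K_{k,p} = K_k \circ \cdots \circ K_{p-1}$ one step at a time, exactly as in Proposition \ref{Lipschitz_control_SA}, and then iterating. Recall that $K_{k,p}(f)(z) = \E[f(\bar\theta_{p+1}) \mid z_k = z]$, and since $\bar\theta_{p+1} = \frac{p}{p+1}\bar\theta_p + \frac{1}{p+1}\theta_p$, the averaged component at time $p+1$ is an explicit convex combination of the past $\theta$'s; in fact $\bar\theta_{p+1} = \frac{1}{p+1}\sum_{j=0}^{p}\theta_j$ when started appropriately, or more precisely $(p+1)\bar\theta_{p+1} = (k+1)\bar\theta_k \cdot \mathbf{1} + \sum_{j=k}^{p}\theta_j$ conditionally on $z_k$. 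So $K_{k,p}(f)(z)$ depends on $z=(z_1,z_2)$ through $z_1$ linearly with weight $\frac{k+1}{p+1}$ and through $z_2$ via the conditional laws of $\theta_j$ for $k \le j \le p$ started from $\theta_k = z_2$.

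The key step is then to control, for each $j$ with $k \le j \le p$, the Lipschitz dependence of $\theta_j$ on the starting point $z_2 = \theta_k$. This is precisely what Proposition \ref{Lipschitz_control_SA} provides: for a $1$-Lipschitz $g$, the map $z_2 \mapsto \E[g(\theta_j) \mid \theta_k = z_2]$ has Lipschitz modulus at most $\prod_{i=k}^{j-1}(1-2\underline{\lambda}\gamma_{i+1} + C_{H,\mu}\gamma^2_{i+1})^{1/2} = (\Pi_{1,j}/\Pi_{1,k})^{1/2}$. First I would write, for $(z,z')$,
\begin{align*}
|K_{k,p}(f)(z) - K_{k,p}(f)(z')| &\leq \frac{k+1}{p+1}|z_1 - z_1'| + \frac{1}{p+1}\sum_{j=k}^{p}\left|\E[f(\cdot)\text{-part from }\theta_j\mid \theta_k=z_2] - (\cdot\mid \theta_k=z_2')\right|,
\end{align*}
using that $f$ is $1$-Lipschitz and the triangle inequality on the convex combination defining $\bar\theta_{p+1}$ (which decouples the $z_1$ contribution from the $z_2$ contributions additively with the stated weights). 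Each term in the sum is bounded by $(\Pi_{1,j}/\Pi_{1,k})^{1/2}|z_2 - z_2'|$ by the Markov-chain Lipschitz estimate; the $j=k$ term contributes $|z_2-z_2'|$ (empty product), and re-indexing the sum from $j=k+1$ to $p$ with an extra $j=k$ term — or absorbing it — yields exactly $\frac{1}{p+1}(|z_2-z_2'| + \sum_{j=k+1}^{p}(\Pi_{1,j}/\Pi_{1,k})^{1/2}|z_2-z_2'|)$, matching the claimed bound (the $j=k$ term being $(\Pi_{1,k}/\Pi_{1,k})^{1/2}=1$).

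The main subtlety — and the step I would be most careful about — is justifying the additive decoupling: one must check that the coupling argument behind Proposition \ref{Lipschitz_control_SA} can be run simultaneously for all the $\theta_j$'s along a \emph{single} realization of the innovations $(U_i)$, so that the same noise is used for the chains started at $z_2$ and at $z_2'$. Since the recursion $\theta_{n+1} = \theta_n - \gamma_{n+1}H(\theta_n,U_{n+1})$ is driven by a fixed i.i.d. sequence, one may define pathwise the two trajectories $(\theta_j^{z_2})$ and $(\theta_j^{z_2'})$ using the same $U$'s; then $\bar\theta_{p+1}^{z} - \bar\theta_{p+1}^{z'} = \frac{k+1}{p+1}(z_1-z_1') + \frac{1}{p+1}\sum_{j=k}^{p}(\theta_j^{z_2} - \theta_j^{z_2'})$ holds almost surely, and taking expectations of $|f(\bar\theta_{p+1}^z) - f(\bar\theta_{p+1}^{z'})| \le |\bar\theta_{p+1}^z - \bar\theta_{p+1}^{z'}|$ together with the a.s.\ bound $\E|\theta_j^{z_2}-\theta_j^{z_2'}| \le \prod_{i=k}^{j-1}(\ldots)^{1/2}|z_2-z_2'|$ (which is really the content of the induction in Proposition \ref{Lipschitz_control_SA}, now read as an $L^2$ contraction of the synchronous coupling) closes the argument. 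A brief remark that $\bar\theta$ enters $K_k$ only affinely, so no extra Lipschitz cost is incurred from the averaging step itself, completes the proof.
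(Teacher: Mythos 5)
Your overall strategy is exactly the paper's: write $\bar{\theta}_{p+1}$ conditionally on $z_k=z$ as an affine combination of $z_1$ and of the $\theta_j$'s started from $\theta_k=z_2$, run the two trajectories with the same innovations, and bound each $\E[|\theta^{z_2}_j-\theta^{z'_2}_j|]$ by $(\Pi_{1,j}/\Pi_{1,k})^{1/2}|z_2-z'_2|$ via the synchronous-coupling $L^2$ contraction underlying Proposition \ref{Lipschitz_control_SA}; the paper's proof is precisely this, so no extra idea is needed. However, your bookkeeping of the averaged component is off by one, and your final bound does not actually match the statement: with the paper's convention $z_n=(\bar{\theta}_{n+1},\theta_n)$, the first component $z_1=\bar{\theta}_{k+1}$ already contains $\theta_0,\dots,\theta_k$, so the correct identity is
$$
\bar{\theta}_{p+1}=\frac{k+1}{p+1}\,z_1+\frac{1}{p+1}\sum_{j=k+1}^{p}\theta_j,
$$
with no $j=k$ term. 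Your decomposition assigns weight $\frac{k+1}{p+1}$ to $z_1$ \emph{and} keeps a $j=k$ term $\theta_k=z_2$; these weights sum to $\frac{p+2}{p+1}>1$, which signals the double count of $\theta_k$, and it produces the extra summand $\frac{1}{p+1}|z_2-z'_2|$ that the claimed inequality does not contain (so the bound you obtain is weaker than, not equal to, the stated one). Once you use the correct convention, the $j=k$ term disappears, the sum starts at $j=k+1$, and the rest of your argument — taking expectations of $|\bar{\theta}^{z}_{p+1}-\bar{\theta}^{z'}_{p+1}|$ under the common-noise coupling and invoking the iterated one-step $L^2$ estimate — closes the proof exactly as in the paper.
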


\begin{proof} Let $(z,z') \in (\R^d \times \R^d)^2$. We denote by $z^{k,z}_{p,1}=\bar{\theta}^{k,z}_{p+1}$ and $z^{k,z}_{p,2}=\theta^{k,z}_{p}$ the values at step $p$ of the two components of the stochastic approximation algorithm $(z_n)_{n\geq0}$ starting at point $z$ at step $k$. Using \eqref{RM_AV} and a straightforward induction, one easily derives
$$
\bar{\theta}^{k,z}_{p+1} = \frac{k+1}{p+1} z_1 + \frac{1}{p+1} \sum_{j=k+1}^{p} \theta^{k,z}_{j},
$$
 
\noindent so that taking conditional expectation in the previous equality and using Proposition \ref{Lipschitz_control_SA},we obtain
\begin{align*}
|K_{k,p}(f)(z)-K_{k,p}(f)(z')| & = | \E[f(\bar{\theta}^{k,z}_{p+1})] - \E[f(\bar{\theta}^{k,z'}_{p+1})] | \leq \E[|\bar{\theta}^{k,z}_{p+1} -\bar{\theta}^{k,z'}_{p+1} |] \\
& \leq \frac{k+1}{p+1} |z_1 - z'_1| + \frac{1}{p+1}\sum_{j=k+1}^{p} \E[|\theta^{k,z}_{j} - \theta^{k,z'}_j |] \\
& \leq  \frac{k+1}{p+1} |z_1-z'_1|+ \frac{1}{p+1}\sum_{j=k+1}^{p} \left(\frac{\Pi_{1,j}}{\Pi_{1,k}}\right)^{\frac12} |z_2-z'_2| \end{align*}

\end{proof}

Let $k \in \left\{0,\cdots, N-1\right\}$ and $f$ be a real-valued $1$-Lipschitz function defined on $\R^d$. Using that the law of the innovations of the scheme satisfies \eqref{CONC_GAUSS},  for all $\lambda \geq0$,  one has
\begin{align*}
\E\left[ \left.  \exp(\lambda K_{k,N-1}f(z_k))\right| z_{k-1}=z \right] & = \E\left[ \left. \exp\left(\lambda  K_{k,N-1}f(\frac{k}{k+1}\bar{\theta}_k +\frac{1}{k+1}\theta_k, \theta_k)\right)  \right| (\bar{\theta}_k, \theta_{k-1})=(z_1,z_2)\right] \\
& \leq \exp(\lambda K_{k-1,N-1}(f)(z)) \exp(\lambda^2 \frac{\beta}{4}[g]^2_1)
\end{align*}

\noindent where $g:u \mapsto K_{k,N-1}(f)\left( \frac{k}{k+1}z_1 + \frac{1}{k+1}z_2 - \frac{\gamma_k}{k+1} H(z_2,u), z_2 -\gamma_k H(z_2,u) \right)$. Combining Proposition \ref{Lipschitz_control_SA_AV} and \A{HLS}$_\alpha$, one easily obtains 
$$
[g]_1 \leq  C_\alpha L^{\frac{1-\alpha}{2}}(z_2) \gamma_k \left( \frac{1}{N} + \frac{1}{N}\sum_{j=k+1}^{N-1} \left(\frac{\Pi_{1,j}}{\Pi_{1,k}}\right)^{\frac12} \right) 
$$

\noindent so we deduce that 
$$
\E\left[ \left.  \exp(\lambda K_{k,N-1}f(z_k))\right| z_{k-1}\right] \leq \exp(\lambda K_{k-1,N-1}(f)(z_{k-1})) \exp\left(\lambda^2 \frac{\beta}{4} C^2_\alpha L^{1-\alpha}(z_{k-1}) \tilde{\gamma}^2_{k,N}\right)
$$

\noindent where we introduced the notation $\tilde{\gamma}_{k,N} := \frac{\gamma_k}{N} \left(1+ \sum_{j=k+1}^{N-1} \left(\Pi_{1, j}/\Pi_{1,k}\right)^{\frac12} \right)$. Hence, taking expectation in the previous inequality and using the H\"{o}lder inequality with conjugate exponents $(p_k,q_k)$, one clearly gets
$$
\E_{\theta_0}\left[ \exp(\lambda K_{k,N-1}(f)(z_k))\right] \leq \E_{\theta_0}[\exp(\lambda p_k K_{k-1,N-1}(f)(z_{k-1}))]^{\frac{1}{p_k}} \E_{\theta_0}\left[\exp\left(\lambda^2 \frac{\beta}{4} C^2_\alpha q_k L^{1-\alpha}(\theta_{k-1}) \tilde{\gamma}^2_{k,N}\right)\right]^{\frac{1}{q_k}}
$$

Similarly to the proof of Proposition \ref{CONT_LAPLACERM}, we set $p_k=1 + \frac{1}{(k+1)\log^2(k+4)}$, $q_k=(1 + \frac{1}{(k+1)\log^2(k+4)}) (k+1) \log^2(k+4) \leq 2(k+1) \log^2(k+4)$ and use Corollary \ref{CONTLAPLACELBIS} to obtain for $\alpha \in(\frac12,1]$
\begin{align*}
\E_{\theta_0}\left[\exp\left(\lambda^2 \frac{\beta}{4} C^2_\alpha q_k L^{1-\alpha}(\theta_{k-1}) \tilde{\gamma}^2_{k,N}\right)\right]^{\frac{1}{q_k}}  & \leq \exp\left(K_{4.1} 2^{\frac{1-\alpha}{2\alpha-1}} \frac{\beta C^2_\alpha}{4}\vee \left(\frac{\beta C^2_\alpha}{4}\right)^{\frac{\alpha}{2\alpha-1}} (  \tilde{\gamma}^2_{k,N} \lambda^2 \vee  \tilde{\gamma}^{\frac{2\alpha}{2\alpha-1}}_{k,N} q^{\frac{1-\alpha}{2\alpha-1}}_k\lambda^{\frac{2\alpha}{2\alpha-1}}) \right).
\end{align*}

An elementary induction argument allows to conclude
\begin{align*}
\E_{\theta_0}\left[ \exp(\lambda f(\bar{\theta}_N))\right] & = \E_{\theta_0}\left[ \exp(\lambda K_{N-1,N-1}(f)(z_{N-1}))\right] \\
& \leq \exp(\lambda \E_{\theta_0}[f(\bar{\theta}_N]) \exp\left(\varphi_{\alpha}(\gamma,H, \theta_0)  (\bar{C}^{\gamma}_N  \lambda^2 \vee \bar{C}^{\gamma,\alpha}_N \lambda^{\frac{2\alpha}{2\alpha-1}}) \ \right)
\end{align*}

\noindent \noindent with $\bar{C}^{\gamma}_N := \sum_{k=1}^{N-1}\tilde{\gamma}^2_{k,N} $, $\bar{C}^{\gamma, \alpha}_N := \sum_{k=1}^{N-1}\tilde{\gamma}^{\frac{2\alpha}{2\alpha-1}}_{k,N} ((k+1) \log^2(k+4))^{\frac{1-\alpha}{2\alpha-1}} $ and where we again introduced, for sake of clarity, the constant $\varphi_{\alpha}(\gamma,H, \theta_0):= K_{4.1} 2^{\frac{1-\alpha}{2\alpha-1}} \frac{\beta C^2_\alpha}{4}\vee \left(\frac{\beta C^2_\alpha}{4}\right)^{\frac{\alpha}{2\alpha-1}} e^{\frac{1}{2\alpha-1}\sum_{k=0}^{N-1} \frac{1}{(k+1)\log^2(k+4)}}$. 

For $\alpha=\frac12$, similarly to the proof of Proposition \ref{CONT_LAPLACERM} (actually use again Lemma \ref{CONTLEMMA}), we derive for all $\lambda \in [0, \lambda_{4.1} / \bar{s}^{1/2}_N)$
$$
\E_{\theta_0}\left[\exp\left(q_k  \frac{\lambda^2}{4}  \beta  \tilde{\gamma}^2_{k,N} C^2_{1/2} L^{1/2}(\theta_{k-1}) \right)\right]^{\frac{1}{q_k}}  \leq \exp\left(\Psi(N,\gamma, \theta_0) \tilde{\gamma}^2_{k,N} \lambda^2 + (\sum_{p=0}^{N-1} \gamma^2_{p+1}) \tilde{\gamma}^2_{k,N} \frac{(\lambda /\lambda_{4.1})^2}{1- (\lambda \bar{s}^{1/2}_N/\lambda_{4.1})} \right)
$$

\noindent with $\bar{s}_N : = \max_{1 \leq k \leq N-1} (k+1) \log^2(k+4) \tilde{\gamma}^2_{k,N}$,  $\Psi(N,\gamma, \theta_0):= \left(L^{\frac12}(\theta_0) + \underline{C} \sum_{p=0}^{N-1}\gamma^2_{p+1}\right)\Pi_{2,N}\left(1/2\right) \frac{\beta C^2_{1/2}}{4} + \beta \eta C_{1/2}^4 \Pi_{2,N}\left(1/2\right) \E[|U|^2]$ and an elementary induction argument clearly yields
\begin{align*}
\forall \lambda \in [0, \lambda_{4.1} / \hat{s}_N), \ \ \E_{\theta_0}[\exp(\lambda f(\theta_N))] & \leq \exp\left(\lambda \E_{\theta_0}[f(\theta_N)]\right) \exp\left(2\varphi_{1/2}(\gamma, H, \theta_0)\bar{C}^{\gamma}_N  \frac{(\lambda / \lambda_{4.1})^2}{1-(\lambda \hat{s}_N / \lambda_{4.1})}  \right)
\end{align*}

\noindent with $\hat{s}_N := \bar{s}^{1/2}_N \exp(\sum_{k=0}^{N-1} \frac{1}{(k+1) \log^2(k+4)})$ and  $\varphi_{1/2}(\gamma, H, \theta_0):= \exp(\sum_{k=0}^{N-1} \frac{1}{(k+1) \log^2(k+4)}) (\lambda^2_{4.1} \Psi(N,\gamma, \theta_0) + \sum_{p=0}^{N-1} \gamma^2_{p+1}) $.


%

\appendix \label{APPEND_A}

\section{Technical results}
\subsection{Proof of Proposition \ref{PROP_CONT_W1}}

Let $e_{\sigma} := \frac{1}{2\sigma}\exp(-|x|/\sigma)$ be the density of the exponential distribution with variance $2\sigma^2$ on $\R$. If $\mu$ is a probability measure on $\R^d$, we define $\mu^{\sigma}$ as the convolution of $\mu$ with $e_{\sigma}^{\otimes d}$, that is 
$$\mu^{\sigma}(dx) := \int{\underset{i = 1}{\stackrel{d}{\prod}}\frac{1}{2\sigma}\exp(-|x_i - y_i|/\sigma)\mu(dy)}.$$

\begin{LEMME} 
If $\mu$ is a probability measure on $\R^d$ with finite first moment, then $W_1(\mu, \mu^{\sigma}) \leq \sqrt{2d}\sigma$.
\end{LEMME}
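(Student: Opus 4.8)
The plan is to exhibit an explicit coupling of $\mu$ and $\mu^{\sigma}$ and to bound the transport cost directly. Let $X$ be a random vector on $\R^d$ with law $\mu$, and, independently of $X$, let $Z = (Z_1,\dots,Z_d)$ be a random vector whose coordinates $Z_i$ are i.i.d.\ with the centered double-exponential density $e_{\sigma}(z) = \frac{1}{2\sigma}\exp(-|z|/\sigma)$ on $\R$. Set $Y := X + Z$. By the very definition of $\mu^{\sigma}$ as the convolution of $\mu$ with $e_{\sigma}^{\otimes d}$, and integrating out the coordinates of $Z$ one at a time, the law of $Y$ is exactly $\mu^{\sigma}$. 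Hence the law $\pi$ of the pair $(X,Y)$ is a coupling of $\mu$ and $\mu^{\sigma}$, i.e.\ $\pi_0 = \mu$, $\pi_1 = \mu^{\sigma}$.

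By the definition of the Wasserstein distance $W_1$ (the case $p=1$, where $|x-y|^p = |x-y|$),
$$
W_1(\mu, \mu^{\sigma}) \;\leq\; \int_{\R^d \times \R^d} |x - y|\, \pi(dx,dy) \;=\; \E\big[\,|X - Y|\,\big] \;=\; \E\big[\,|Z|\,\big].
$$
It remains to estimate $\E[|Z|]$. By the Cauchy--Schwarz (equivalently, Jensen) inequality applied to the square root,
$$
\E\big[\,|Z|\,\big] \;\leq\; \E\big[\,|Z|^2\,\big]^{1/2} \;=\; \Big(\sum_{i=1}^d \E[Z_i^2]\Big)^{1/2},
$$
and a direct computation (two integrations by parts, or the standard second-moment formula for the Laplace law) gives $\E[Z_i^2] = 2\sigma^2$ for each $i$. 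Therefore $\E[|Z|] \leq \sqrt{2d}\,\sigma$, which is the claimed bound. The assumption that $\mu$ has finite first moment ensures that $\mu, \mu^{\sigma} \in \mathcal{P}_1(\R^d)$ and that $\E[|X-Y|]<\infty$, so that all quantities above are finite and the argument is legitimate.

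There is essentially no serious obstacle here; the only points to verify are routine: that $Y$ indeed has law $\mu^{\sigma}$ (immediate from the convolution formula defining $\mu^{\sigma}$) and the elementary identity $\int_{\R} z^2 e_{\sigma}(z)\,dz = 2\sigma^2$. The independence of $X$ and $Z$ is used only to make the construction of the coupling transparent; it plays no further role in the estimate.
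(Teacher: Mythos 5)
Your proof is correct and is essentially identical to the paper's: both construct the coupling $(X, X+Z)$ with $Z$ an independent $e_{\sigma}^{\otimes d}$ vector and bound the cost by $\E[|Z|] \leq \E[|Z|^2]^{1/2} = \sqrt{2d}\,\sigma$ via Cauchy--Schwarz. No issues.
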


\begin{proof}
Let $X$ and $Y$ be independent random vectors with laws $\mu$ and $e_{\sigma}^{\otimes d}$ respectively. Then $(X,X+Y)$ is a coupling of $\mu$ and $\mu^{\sigma}$, and 
$$
W_1(\mu, \mu^{\sigma}) \leq \mathbb{E}[|Y|] \leq \mathbb{E}[|Y|^2]^{1/2} \leq \sqrt{2d}\sigma.
$$
\end{proof}

We therefore have the bound
\begin{align}
W_1(\mu_n, \mu) &\leq W_1(\mu_n, \mu_n^{\sigma}) + W_1(\mu_n^{\sigma}, \mu^{\sigma}) + W_1(\mu^{\sigma}, \mu) \leq W_1(\mu_n^{\sigma}, \mu^{\sigma}) + \sqrt{8d}\sigma,
\end{align}

\noindent so what is left is to bound $\mathbb{E}[W_1(\mu_n^{\sigma}, \mu^{\sigma})]$ and to optimize in $\sigma$.

The density of $\mu_n^{\sigma}$ with respect to the Lebesgue measure is given by $g_{1,\sigma, n}(x) := \frac{1}{n} \sum e_{\sigma}^{\otimes d}(x - x_i)$, and the density of $\mu^{\sigma}$ is $g_{2,\sigma}(x) := \mathbb{E}_{\mu}( e_{\sigma}^{\otimes d}(x - X))$.

By the Kantorovitch-Rubinstein duality formula, we have
$$
W_1(\mu_n^{\sigma}, \mu^{\sigma}) = \sup_{f: [f]_1 \leq 1}\hspace{2mm} \int{f(x)g_{1,\sigma, n}(x)dx} - \int{f(x)g_{2,\sigma}(x)dx} \leq \int{|x||g_{1,\sigma, n}(x) - g_{2,\sigma}(x)|dx} 
$$

To bound this quantity, we shall use the following Carlson-type inequality: for any nonnegative  measurable function $f$ on $\R^d$, we have
$$
\int{f(x)dx} \leq C_d \sqrt{\int{(1 + |x|^{d+1})f(x)^2dx}}, \ \ \ C_d := \sqrt{\int_{\R^d}{\frac{1}{1 + |x|^{d+1}}dx}}.
$$

\noindent This can be proved by using Jensen's inequality with the finite measure $\frac{1}{1 + |x|^{d+1}}dx$. Using this inequality, we get the bound 
\begin{align}
W_1(\mu_n^{\sigma}, \mu^{\sigma}) &\leq C_d \sqrt{\int{(1 + |x|^{d+1})|x|^2|g_{1,\sigma, n}(x) - g_{2,\sigma}(x)|^2dx}} \leq C_d\sqrt{\int{(1 + 2|x|^{d+3})|g_{1,\sigma, n}(x) - g_{2,\sigma}(x)|^2dx}}. \notag
\end{align}

Therefore, 
\begin{align}
\mathbb{E}[W_1(\mu_n^{\sigma}, \mu^{\sigma})] &\leq C_d \mathbb{E}\left[\sqrt{\int{(1 + 2|x|^{d+3})|\frac{1}{n} \sum e_{\sigma}^{\otimes d}(x - X_i) - \mathbb{E}_{\mu}( e_{\sigma}^{\otimes d}(x - X))|^2dx}} \right] \notag \\
&\leq \frac{C_d}{\sqrt{n}} \sqrt{\int{(1 + 2|x|^{d+3}) \text{Var}_{\mu}(e_{\sigma}^{\otimes d}(x - X))dx}} \notag \\
&\leq \frac{C_d}{\sqrt{n}} \sqrt{\int{(1 + 2|x|^{d+3}) \mathbb{E}[e_{\sigma}^{\otimes d}(x - X)^2]dx}}. \notag 
\end{align}

Note that $e_{\sigma}^{\otimes d}(x)^2 = 2^{-2d}\sigma^{-d}e_{\sigma/2}^{\otimes d}(x)$, so that we get
\begin{align*}
\mathbb{E}[W_1(\mu_n^{\sigma}, \mu^{\sigma})] &\leq \frac{C_d}{2^d\sigma^{d/2}\sqrt{n}} \sqrt{\int{(1 + 2|x|^{d+3})\int{e_{\sigma/2}^{\otimes d}(x-y)\mu(dy)}dx}} \\
&\leq \frac{C_d}{2^d\sigma^{d/2}\sqrt{n}} \sqrt{\int{\int{(1 + 2|u + y|^{d+3})e_{\sigma/2}^{\otimes d}(u)du}\mu(dy)}} \\
&\leq \frac{C_d}{2^d\sigma^{d/2}\sqrt{n}} \sqrt{\int{\int{(1 + 2^{d+3}(|u|^{d+3} + |y|^{d+3}))e_{\sigma/2}^{\otimes d}(u)du}\mu(dy)}} \\
&\leq \frac{C_d}{2^d\sigma^{d/2}\sqrt{n}} \sqrt{1 + 2^{d+3}\int{|y|^{d+3}\mu(dy)} + 2^{d+3}\int{|u|^{d+3}e_{\sigma/2}^{\otimes d}(u)du}}  \\
&\leq \frac{C_d}{2^d\sigma^{d/2}\sqrt{n}} \sqrt{1 + 2^{d+3}\int{|y|^{d+3}\mu(dy)} + \sigma^{d+3}\int{|u|^{d+3}e_{1}^{\otimes d}(u)du}}  \\
&\leq \frac{C_d}{2^d\sigma^{d/2}\sqrt{n}} \sqrt{1 + 2^{d+3}\int{|y|^{d+3}\mu(dy)} + 2^{d+3}\sigma^{d+3}d(d+3)!} 
\end{align*}

In the end, assuming $\sigma \leq 1$, we obtain 
$$\mathbb{E}[W_1(\mu, \mu^{\sigma})] \leq \sqrt{8d}\sigma + \frac{C_d}{2^d\sigma^{d/2}\sqrt{n}} \sqrt{1 + 2^{d+3}\int{|y|^{d+3}\mu(dy)} + 2^{d+3}\sigma^{d+3}d(d+3)!} \leq C(d,\mu)(\sigma + \frac{\sigma^{-d/2}}{\sqrt{n}})
$$

Taking $\sigma = n^{-1/(d+2)}$, we get the upper bound we were aiming for.

\bibliographystyle{alpha}
\bibliography{bibli}

 \end{document}